\numberwithin{equation}{section}
\numberwithin{equation}{section}
\newcommand\R{{\mathbb R}}
\newtheorem{claim}{Claim}[section]
\newtheorem{theorem}{Theorem}[section]
\newtheorem{corollary}[theorem]{Corollary}
\newtheorem{lemma}[theorem]{Lemma}
\newtheorem{proposition}[theorem]{Proposition}
\theoremstyle{definition}
\newtheorem{definition}[theorem]{Definition}
\newtheorem{remark}[theorem]{Remark}
\newcommand{\ep}{\varepsilon}
\newcommand{\fc}{\frac}
\begin{document}
\title{Bilinear Strichartz's type estimates in Besov spaces with application  to  inhomogeneous  nonlinear biharmonic  Schr\"odinger equation}

\author{Xuan Liu, Ting Zhang\\
	\small{School of Mathematical Sciences, Zhejiang University,
		Hangzhou 310027, China}}
\date{}

\maketitle


\begin{abstract}
	In this paper, we consider the well-posedness of  the  inhomogeneous nonlinear biharmonic  Schr\"odinger equation
	with spatial inhomogeneity coefficient $K(x)$ behaves like $\left|x\right|^{-b}$ for $0<b<\min \left\{\frac{N}{2},4\right\} $. We show the local well-posedness in the whole $H^s$-subcritical case, with $0<s\le2$.  The difficulties of this problem come from the singularity of $K(x)$ and the lack of differentiability of the nonlinear term.  To resolve this, we derive  the bilinear Strichartz's type estimates for the nonlinear biharmonic Schr\"odinger equations in  Besov spaces.
	\\ \textbf{Keywords:} Schr\"odinger equation, Bilinear Strichartz's estimate,  Local well-posedness, Besov spaces.
\end{abstract}
\section{Introduction}
In this paper, we consider the Cauchy problem associated to the inhomogeneous biharmonic nonlinear Schr\"{o}dinger equation
\begin{equation}\label{NLS}
	\left\{\begin{array}{l}
		i \partial_{t} u+\Delta^{2} u+\mu\Delta u+K(x)f(u)=0, \quad t \in \mathbb{R}, x \in \mathbb{R}^{N} \\
		u(0, x)=\varphi(x)
	\end{array}\right.
\end{equation}
where $N\ge1$,  $\mu=-1$ or $0$, $u:\mathbb{R}\times\mathbb{R}^N\rightarrow\mathbb{C}$ is a complex-valued function, $K(x)$, $f(u)$ are  the  inhomogeneity coefficient and the nonlinear term,  respectively.  Note that if $\mu=0$ and $K(x)=\lambda \left|x\right|^{-b}$, $f(u)= \left|u\right|^{\alpha}u$ with $\lambda\in\mathbb{C}$, $0<b<\min \left\{4,\frac{N}{2}\right\} $,  $\alpha>0$,  the equation (\ref{NLS})  is invariant under the scaling, $
u_k(t,x)=k^{\frac{4-b}\alpha}u(k^4t,kx), k>0.
$  This means if $u$ is a solution of (\ref{NLS}) with the  initial datum $\phi$,  so is $u_k$ with the initial datum $\phi_k=k^{\frac{4-b}\alpha}\phi(kx)$.  Computing the homogeneous Sobolev norm, we get
$$
\left\|\phi_k\right\|_{\dot{H}^s}=k^{s-\frac{N}{2}+\frac{4-b}{\alpha}}\left\|\phi\right\|_{\dot{H}^s}.
$$
The Sobolev index which leaves the scaling symmetry invariant is called the critical index and is defined as  $s_c=\frac N2-\frac{4-b}\alpha$.  If $s_c=s$ (equivalently $\alpha=\frac{8-2b}{N-2s}$),  the Cauchy problem (\ref{NLS}) is known as $H^s$-critical; if $s_c>s$  (equivalently $0<\alpha$, $(N-2s)\alpha <8-2b$),  it is called $H^s$-subcritical.
The limiting case $b=0$ (classical biharmonic nonlinear Schr\"{o}dinger equation, also called the fourth-order Schr\"{o}dinger equation) has been introduced by Karpman \cite{Karpman1}, and Karpman--Shagalov \cite{Karpman2} to take into
account the role of small fourth-order dispersion terms in the propagation of intense laser beams in a bulk medium with Kerr
nonlinearity.  Since then, the study of nonlinear fourth-order Schr\"odinger equation
has been attracted a lot of interest in the past decade. See  \cite{Guo2,Gu,xuan,Miao,Miao2,Pa,Pa2} and  references cited therein.

The equation in (\ref{NLS}) has a counterpart for the Laplacian operator, namely, the inhomogeneous nonlinear Schr\"odinger equation
\begin{equation}\label{INLS}
	i\partial_tu+\Delta u+K(x)|u|^\alpha u=0.
\end{equation}
In Gill \cite{Gill} and Liu-Tripathi \cite{Liu}, it was suggested that stable high power propagation can be achieved in a plasma by sending a preliminary laser beam that creates a channel with a reduced electron density, and thus reduces the nonlinearity inside the channel. In
this case, the beam propagation can be modeled by the inhomogeneous nonlinear Schr\"odinger equation (\ref{INLS}). In addition, Fibich and Wang \cite{Fibich2} investigated (\ref{INLS}) for $K(\ep|x|)$ with $\ep$ small and $K\in C^4(\R^N)\cap L^\infty (\R^N)$, where  the solution $u$ is the electric field in laser and optics, and $K(x)$ is proportional to the electron density with a small parameter $\ep>0$ (see also \cite{Liu2}). For other interesting Physical applications of (\ref{INLS}), we refer to  \cite{Berge,Bo1,Bo2,Kar,Merle,Raphael,Towers} and the references therein.

Let us first review some known well-posedness results for (\ref{INLS}). We shall assume  $K(x)=\lambda \left|x\right|^{-b}$ with $\lambda \in\R$,  $0<b<2$ to make the review shorter.   Genoud and Stuart  \cite{Ge} first showed the local well-posedness in $H^{1}\left(\mathbb{R}^{N}\right)$ for $0<b<\min \{2, N\}$ and $0<\alpha$, $(N-2)\alpha<4-2b$  by using the abstract argument of Cazenave \cite{Ca9}, which does not use Strichartz's estimates.  In this case, Genoud \cite{Ge0} and Farah \cite{Fa} also showed
how small should be the initial data to have global well-posedness, respectively, in the spirit of Weinstein \cite{Wei} and Holmer-Roudenko \cite{Holmer} for the classical case $b=0$. Recently, using Strichartz's estimate  and the contraction mapping argument,  Guzm\'an  \cite{Gu2} showed the local well-posedness of  (\ref{INLS}) for $0<\alpha $, $(N-2)\alpha <4-2b$, but  under the  restrictions: $b<\frac{N}{3}$ if $N\le3$.  This restriction is a bit improved by Dinh \cite{Dinh} in  dimension $N=3$, for $0<b<\frac{3}{2}$ but for more restricted values $\alpha <\frac{6-4b}{2b-1}$. Although these results are a bit weak on the range of $b$ compared with the result of Genoud-Stuart, they provide more information on the solution due to the Strichartz's estimates. In particular, one can know    that the local solutions belong to $L^p_{loc}([0,T_{\text{max}}),L^q(\R^N))$ for any Schr\"odinger admissible pair $(p,q)$. In general, such property   plays an important role in studying other interesting problems, for instance, scattering and blow up. Finally, we review  the well-posedness of (\ref{INLS}) in $H^s$. Defining
\begin{equation}
	\hat 2:=\begin{cases}
		\min \left\{2,1+\frac{n-2s}{2}\right\}, \qquad &n\ge3,\\
		n-s,\qquad &n=1,2,
	\end{cases}
\end{equation}
it was proved in \cite{An} that, for $N\ge1$, $0<b<\hat 2$, and $0\le s<\frac{N}{2}$, $0<\alpha <\frac{4-2b}{N-2s}$ or $\frac{N}{2}\le s<\min \left\{N,\frac{N}{2}+1\right\} $, $0<\alpha <\infty $,  the Cauchy problem (\ref{INLS}) is local well-posed in $H^s(\R^N)$. Moreover, it was proved in \cite{Kim} that  (\ref{INLS}) is local well-posed in a weighted Sobolev space for  $N\ge3$, $0<s<\frac{1}{3}$, $\max \left\{\frac{26-3N}{12},\ \frac{12s+4Ns-8s^2}{N+4s}\right\}<b<2 $ and $\max \left\{0,\ \frac{10s-2\alpha }{N-6s}\right\}<\alpha \le \frac{4-2b}{N-2s} $.

In this paper we are interested in studying the well-posedness for (\ref{NLS}) in $H^s(\R^N)$, with $0<s\le2$.  This  problem  was firstly studied by C.M. Guzm\'an and A. Pastor \cite{Gu} for  $K(x)=\lambda \left|x\right|^{-b}$, $f(u)= \left|u\right|^\alpha u$, $\lambda\in\R$. They proved the local-well posedness in $H^2$ for $N\ge3$, $0<b<\min \left\{\frac{N}{2},4\right\}, \ 	\max \left\{0,\frac{2(1-b)}{N}\right\}<\alpha$, $ (N-4)\alpha <8-2b  $.  Also, they proved global well-posedness in the mass-subcritical and mass-critical cases in $H^2$, that is, $\min \left\{\frac{2(1-b)}{N},0\right\}<\alpha \le \frac{4-b}{N} $.  Afterwards, Cardoso--Guzman--Pastor \cite{Cardoso} established the local well-posedness in $\dot H^{s}\cap \dot H^2$ with $N\ge5$, $0<s<2$, $0<b<\min \left\{\frac{N}{2},4\right\} $ and $\max \left\{\frac{8-2b}{N},1\right\}<\alpha <\frac{8-2b}{N-4} $. Note that \cite{Gu,Cardoso} does not treat the low dimensions cases and there is  a lower bound for the parameter $\alpha$. The restrictions of dimensions and index $\alpha $ are due to the singularity of inhomogeneity coefficient and the lack of differentiability of the nonlinear term.  To resolve this, we derive  the following  bilinear Strichartz's type estimates for   nonlinear biharmonic Schr\"odinger equations in  Besov spaces,
by which we can replace the spatial derivative of order $s$ with the fractional
order time derivative of order $s/4$.  For the definition of the biharmonic admissible  set $\Lambda_b$, we refer to Section \ref{s2}.
\begin{proposition}\label{p1}
	Let  $N\ge1$, $\mu=0$ or $-1$,    $0<s\le2$, $(q,r)$, $(\gamma_0,\rho_0)$, $(\gamma,\rho)$, $(\gamma_1,\rho_1)\in\Lambda_b$ be four biharmonic admissible pairs, and $1\le \overline{q},\overline{r}\le2$ with  $\fc4{\overline q}-N(\fc12-\fc1{\overline r})=4-s$. Then for any $\varphi\in H^s(\R^N)$ and $f\in B^{s/4}_{\gamma_0',2}L^{\rho_0'}\cap L^{\overline q}L^{\overline r}$, we have $e^{it(\Delta^2+\mu\Delta)}\varphi, Gf\in C(\R,H^{s})$, where
	\begin{equation}
		(Gf)(t):=\int_0^te^{i(t-s)(\Delta^2+\mu\Delta)}f(s)ds.\notag
	\end{equation}
	Moreover, the following inequalities hold:
	\begin{equation}\label{311}
		\|e^{it(\Delta^2+\mu\Delta)}\varphi  \|_{L^qB^{s}_{r,2}\cap B^{s/4}_{q,2}L^r}\lesssim \left\|\varphi\right\|_{H^s},
	\end{equation}
	and
	\begin{eqnarray}\label{11214}
		\|G(fg)\|_{L^qB^{s}_{r,2}\cap B^{s/4}_{q,2}L^r}
		&\lesssim& \left(\int_{-\infty}^\infty\left(|\tau|^{-s/4}\|g(t-\tau)(f(t-\tau)-f(t))\|_{L^{\gamma_0'}L^{\rho_0'}}\right)^2\frac{d\tau}{|\tau|}\right)^{1/2}\notag\\
		&&+\left(\int_{-\infty}^\infty\left(|\tau|^{-s/4}\|(g(t-\tau)-g(t))f(t)\|_{L^{\gamma_1'}L^{\rho_1'}}\right)^2\frac{d\tau}{|\tau|}\right)^{1/2}\notag\\
		&&+\|fg\|_{L^{\gamma'}L^{\rho'}\cap L^{\overline q}L^{\overline r}}.
	\end{eqnarray}
\end{proposition}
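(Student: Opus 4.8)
My plan is to split the norm $L^qB^s_{r,2}\cap B^{s/4}_{q,2}L^r$ into its two summands and, for each, dispose first of the linear term $e^{it(\Delta^2+\mu\Delta)}\varphi$ and then of the Duhamel term $G(fg)$. Write $L=\Delta^2+\mu\Delta$ and let $P_j$, $\tilde P_k$ be Littlewood--Paley projections in $x$ and in $t$; since $(q,r)\in\Lambda_b$ forces $q,r\ge 2$, Minkowski's inequality permits exchanging $L^q_t$ with the $\ell^2$-summation in the Besov norms. The linear estimate \eqref{311} then follows by applying the linear and retarded Strichartz estimates of Section~\ref{s2} blockwise: $\|e^{itL}\varphi\|_{L^qB^s_{r,2}}\lesssim\big\|2^{js}\|e^{itL}P_j\varphi\|_{L^qL^r}\big\|_{\ell^2_j}\lesssim\big\|2^{js}\|P_j\varphi\|_{L^2}\big\|_{\ell^2_j}\simeq\|\varphi\|_{H^s}$; for the $B^{s/4}_{q,2}L^r$ part I would use in addition the dispersion relation $\zeta=|\xi|^4-\mu|\xi|^2$, so that the time--Fourier support of $e^{itL}P_j\varphi$ lies at $|\zeta|\sim2^{4j}$ once $|\xi|\sim2^j\gtrsim 1$ and hence $\tilde P_k\big(e^{itL}\varphi\big)$ is, up to a low-frequency error governed by the harmless $\mu\Delta$ term, comparable to $e^{itL}P_{k/4}\varphi$, with matching weights $2^{ks/4}\simeq2^{js}$. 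Continuity of the linear term into $H^s$ is the usual consequence of strong continuity of the group.

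The heart of the matter is the $B^{s/4}_{q,2}L^r$-bound for $G(fg)$, where the aim is to exploit the heuristic ``$s/4$ time derivatives $\approx$ $s$ space derivatives'' so as never to differentiate $fg$ in space. On the time interval supporting $fg$ one has $G(fg)=\tilde G(\mathbf{1}\,fg)$, where $\tilde Gh(t):=\int_{-\infty}^te^{i(t-\sigma)L}h(\sigma)\,d\sigma$, $\mathbf{1}$ is the relevant indicator, and $\tilde G$ commutes with time translations, so that
\begin{equation}\notag
	G(fg)(t)-G(fg)(t-\tau)=\tilde G\big(h-h(\cdot-\tau)\big)(t),\qquad h:=\mathbf{1}\,fg .
\end{equation}
As $0<s/4\le1/2<1$, the homogeneous time-Besov seminorm satisfies the first-difference characterization $\|v\|_{\dot B^{s/4}_{q,2}L^r}\simeq\big(\int_{\R}\big(|\tau|^{-s/4}\|v(\cdot)-v(\cdot-\tau)\|_{L^q_tL^r_x}\big)^2|\tau|^{-1}\,d\tau\big)^{1/2}$. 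Feeding the displayed identity into this, using the retarded Strichartz estimate for $\tilde G$ (which, being linear, may be applied with different admissible pairs to different inputs), and invoking the discrete Leibniz identity $h(t)-h(t-\tau)=-g(t-\tau)\big(f(t-\tau)-f(t)\big)-\big(g(t-\tau)-g(t)\big)f(t)$ (modulo boundary terms from $\mathbf{1}$) produces precisely the first two terms on the right of \eqref{11214}, the two pieces landing in $L^{\gamma_0'}L^{\rho_0'}$ and $L^{\gamma_1'}L^{\rho_1'}$. The boundary terms, together with the low-time-frequency part of the inhomogeneous norm $B^{s/4}_{q,2}L^r$, are absorbed into $\|fg\|_{L^{\gamma'}L^{\rho'}}$ by the plain inhomogeneous Strichartz estimate.

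For the remaining summand $\|G(fg)\|_{L^qB^s_{r,2}}$ (and for $G(fg)\in C(\R,H^s)$) I would decompose $fg$ simultaneously in space and time frequencies: on the quartic resonance $\zeta\sim|\xi|^4$ a space block at scale $2^j$ is paired with a time block at scale $2^{4j}$, so the resonant part again reduces to the first two terms of \eqref{11214}; the non-resonant part, where the symbol $(\zeta-|\xi|^4+\mu|\xi|^2)^{-1}$ is bounded, and the low-frequency regime, where $\mu\Delta$ modifies the dispersion, would be handled by a Foschi-type inhomogeneous (``gap'') estimate of the form $\tilde G:L^{\overline q}L^{\overline r}\to L^q\dot W^{s,r}$ --- here the scaling identity $\frac4{\overline q}-N(\frac12-\frac1{\overline r})=4-s$ is exactly the homogeneity that makes the estimate possible and $1\le\overline q,\overline r\le2$ is what makes it ``acceptable''. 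This yields the last term $\|fg\|_{L^{\overline q}L^{\overline r}}$, whereupon $G(fg)\in C(\R,H^s)$ follows from the resulting bounds by approximation.

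The step I expect to be the main obstacle is making this frequency localization rigorous, i.e.\ justifying the replacement of $|D_t|^{s/4}$ by $|\nabla|^s$ on the Duhamel term: the operative Fourier multiplier $|\xi|^s\big(|\zeta|^{s/4}(\zeta-|\xi|^4+\mu|\xi|^2)\big)^{-1}$ is not of Mikhlin class on all of $\R_\zeta\times\R^N_\xi$, so one must separate the resonant zone (on which it behaves like the kernel in the usual retarded Strichartz estimate) from the non-resonant and low-frequency zones (on which one retreats to the gap estimate), all while tracking the Christ--Kiselev step used to pass from $\tilde G$ to $G$ and the mismatch between the homogeneous time-Besov seminorm in the difference characterization and the inhomogeneous norm in the statement. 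Everything else should be a bookkeeping reassembly of the linear and bilinear Strichartz estimates of Section~\ref{s2}.
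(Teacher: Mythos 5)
Your plan is correct in outline, and its central mechanism --- split the increment of the product $fg$ by the discrete Leibniz identity so that the two pieces can be fed into the retarded Strichartz estimate with two \emph{independently} chosen admissible pairs, and kill the non-resonant part by a gap-type estimate whose scaling is exactly $\frac4{\overline q}-N(\frac12-\frac1{\overline r})=4-s$ --- is the same as the paper's. The execution differs in three places. First, the paper never applies the first-difference characterization of the time-Besov seminorm directly to $G(fg)$: it computes $\phi_j*_t(G(fg))$ exactly from the space--time Fourier representation \eqref{9206}, and converts the convolution with the mean-zero $\phi_j$ into an average of increments via $\int\phi=0$ (identity \eqref{248}), followed by Cauchy--Schwarz in $\tau$; this sidesteps the homogeneous-versus-inhomogeneous mismatch you flag. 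Second, the exact identity \eqref{2410} produces $\int_\R\mathrm{sign}(t-\tau)e^{i(t-\tau)(\Delta^2+\mu\Delta)}(\cdots)\,d\tau$, i.e.\ the difference of the retarded and advanced propagators, both of which obey Strichartz, so no Christ--Kiselev step is needed at all; the price is two boundary terms, one of which is precisely your ``non-resonant zone'' and is disposed of by the kernel bound $\|K_j\|_{L^{q_0}L^{r_0}}\lesssim 2^{-js/4}$ of Lemma \ref{L1} together with Young's inequality --- this is the rigorous incarnation of your Foschi-type gap estimate, and it is where the hypotheses on $(\overline q,\overline r)$ enter. Third, for the $B^{s/4}_{q,2}L^r$ half of the linear estimate \eqref{311} the paper does not localize the free solution in time frequency along the dispersion surface (which, as you note, is delicate at low spatial frequencies when $\mu=-1$); it simply real-interpolates $\mathcal G:L^2\to L^qL^r$ against $\mathcal G:H^4\to H^{1,q}L^r$ and identifies $(L^{q}L^{r},H^{1,q}L^{r})_{s/4,2}=B^{s/4}_{q,2}L^r$ by Pecher's lemma (Lemma \ref{31}). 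Finally, your resonance pairing $\zeta\sim|\xi|^4$ for the $L^qB^s_{r,2}$ bound is made precise in the paper by replacing the spatial Besov norm with the equivalent one adapted to $|\xi|^4-\mu|\xi|^2$ (Lemma \ref{dj}), after which the identical difference argument runs with $\int\chi=0$ in place of $\int\phi=0$. In short: same ideas, but the paper's exact Fourier-side identities let it avoid both Christ--Kiselev and a genuine bilinear frequency decomposition, which is where your version would require the most additional work.
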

\begin{remark}
	Proposition \ref{p1} is a refinement of the following inequality previously obtained by  Nakamura and Wada for classical Schr\"odinger equation in \cite{Na}:
	\begin{equation}
		\left\|Gf\right\|_{L^qB^s_{r,2}\cap B^{s/2}_{q,2}L^r}\lesssim \left\|f\right\|_{B^{s/2}_{\gamma',2}L^{\rho'}}+\left\|f\right\|_{L^{\overline{q}}L^{\overline{r}}},\notag
	\end{equation}
	where $0<s<2$, $(q,r)$, $(\gamma,\rho)$ are two classical Schr\"odinger admissible pairs and $1\le \overline{q},\overline{r}\le2$ with $\frac{2}{\overline{q}}-N\left(\frac{1}{2}-\frac{1}{\overline{r}}\right)=2-s$.
	The most important advantage of our new estimates (\ref{11214}) is that  we can choose the admissible
	pairs $(\gamma_0,\rho_0)$, $(\gamma_1,\rho_1)$ independently when the difference falls on different functions. Thus we can choose the index more flexible when establishing the nonlinear estimates needed for the contraction argument. This can be an advantage when we consider not only pure power but also more complicated nonlinear terms.
\end{remark}
Using the bilinear Strichartz's type estimate, the detailed technique of choosing indices (see Lemmas \ref{l1}--\ref{l10}) and applying various embeddings in nonlinear estimates, we can   establish the $H^s$ local well-posedness for  (\ref{NLS}) in the whole $H^s$ subcritical case, with $0<s\le2$.  Before stating our results, we give the following notation.
\begin{definition}
	Let $\alpha >0$,  $f\in C^1(\mathbb{C},\mathbb{C})$ in the real sense. We say that $f$ belongs to the class $\mathcal{ \alpha }$ if it satisfies $f(0)=0$ and
	\begin{equation}
		\left|f'(z_1)-f'(z_2)\right|\lesssim (|z_1|^{\alpha-1}+|z_2|^{\alpha-1})\left|z_1-z_2\right|,\qquad \forall z_1,z_2\in\mathbb{C}.
	\end{equation}
\end{definition}
\begin{remark}
	We note that the power type nonlinearities $f(u)=\lambda \left|u\right|^\alpha u$ and $f(u)=\lambda \left|u\right|^{\alpha +1}$ with $\lambda \in\mathbb{C}$, $\alpha >0$ are in the class $\mathcal{C}(\alpha )$. Moreover, for any $\alpha>0$ and  $f\in \mathcal{C}(\alpha)$, it is easy to check that the following inequality holds  for any $u,v\in\mathbb{C}$, 
	\begin{equation}\label{fu}
		\left|f(u)-f(v)\right|\lesssim \left(\left|u\right|^{\alpha}+\left|v\right|^{\alpha}\right)\left|u-v\right|.
	\end{equation}
\end{remark}
Now we are ready to state our main result.
\begin{theorem}\label{T1}
	Let $N\ge1$, $\mu=0$ or $-1$,  $\beta>\max \left\{2,\frac{N}{4}\right\} $, $0<s\le2$,   $0<\alpha$,  $\left(N-2s\right)\alpha <8-\frac{2N}{\beta}$, $f\in\mathcal{C}(\alpha )$, and $K(x)\in L^\infty (\R^N)+ L^\beta(\R^N)$.  Given $\varphi\in H^s(\R^N)$, there exists  $T_{\text{max}}(\varphi)\in(0,\infty]$ and a unique  maximum solution $ u\in C([0,T_{\text{max}}(\varphi)),H^s)\bigcap_{(q,r)\in\Lambda_b} L^{q}$ $\left.\left([0,T_{\text{max}}(\varphi)), B_{r, 2}^{s}\right) \right)$ to the  Cauchy problem
	(\ref{NLS}),   with  the following blowup alternative holds:
	\begin{equation}\label{bl1-1}
		\lim_{t\uparrow T_{\text{max}}}\|u(t)\|_{H^s}=\infty,\qquad \text{if } T_{\text{max}}(\varphi)<\infty.
	\end{equation}
	Moreover, if $\varphi_n\rightarrow\varphi$ in $H^s(\R^N)$ and  $u_n$ denotes the solution of (\ref{NLS}) with the initial value $\varphi_n$, then $u_n\rightarrow u$ in $C([0,A],H^s(\R^N))$ for  any  $0<A<T_{\text{max}}(\varphi).$
\end{theorem}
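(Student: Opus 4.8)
The plan is to run the classical contraction-mapping argument on the Duhamel form of (\ref{NLS}),
\[
u(t)=e^{it(\Delta^2+\mu\Delta)}\varphi+i\,G\bigl(Kf(u)\bigr)(t),
\]
with all the analytic weight carried by Proposition~\ref{p1}. First I would fix a finite family $\mathcal{A}\subset\Lambda_b$ of biharmonic admissible pairs — precisely those forced by the index computations of Lemmas~\ref{l1}--\ref{l10} — and, for $T,M>0$, work in
\[
E(T,M):=\Bigl\{u:\ \|u\|_{Y_T}:=\sup_{(q,r)\in\mathcal{A}}\|u\|_{L^q([0,T],B^s_{r,2})\cap B^{s/4}_{q,2}([0,T],L^r)}\le M\Bigr\},
\]
with the derivative-free metric $d(u,v):=\sup_{(q,r)\in\mathcal{A}}\|u-v\|_{L^q([0,T],L^r)}$ (which, taking $(\infty,2)\in\mathcal{A}$, in particular controls $u-v$ in $L^\infty([0,T],L^2)$). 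Closed balls of $Y_T$ being complete for this weaker metric, $(E(T,M),d)$ is a complete metric space on which I would set up the map $\Phi(u):=e^{it(\Delta^2+\mu\Delta)}\varphi+i\,G(Kf(u))$.

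For stability of the ball, the homogeneous bound (\ref{311}) gives $\|e^{it(\Delta^2+\mu\Delta)}\varphi\|_{Y_T}\le C_0\|\varphi\|_{H^s}$ uniformly in $T$, so the linear part cannot be made small by shrinking $T$ and I would instead absorb it by taking $M:=2C_0\|\varphi\|_{H^s}$. For the Duhamel term I would apply the bilinear estimate (\ref{11214}) to $G(Kf(u))$, writing $Kf(u)$ as the product of the time-independent $K$ and $f(u)$; since $K(t-\tau)-K(t)\equiv 0$, one of the two averaged terms on the right-hand side vanishes outright, and for the surviving one I would decompose $K=K_\infty+K_\beta\in L^\infty+L^\beta$ and use $f\in\mathcal{C}(\alpha)$: by (\ref{fu}),
\[
\bigl|f(u)(t-\tau,x)-f(u)(t,x)\bigr|\lesssim\bigl(|u(t-\tau,x)|^{\alpha}+|u(t,x)|^{\alpha}\bigr)|u(t-\tau,x)-u(t,x)|,
\]
together with $|f(u)|\lesssim|u|^{\alpha+1}$ (take $v=0$ in (\ref{fu})) for the non-averaged term $\|Kf(u)\|_{L^{\gamma'}L^{\rho'}\cap L^{\overline q}L^{\overline r}}$. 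Estimating every term on the right of (\ref{11214}) by distributing the powers of $u$ and the two pieces of $K$ through Hölder in space and time, and controlling the $|\tau|^{-s/4}$-weighted averages by the $B^{s/4}_{q,2}L^r$ component of $\|u\|_{Y_T}$ via the difference characterization of Besov spaces, Lemmas~\ref{l1}--\ref{l10} would yield $\|G(Kf(u))\|_{Y_T}\le C_1 T^{\theta}\|u\|_{Y_T}^{\alpha+1}$ for some $\theta=\theta(N,s,\alpha,\beta)>0$; it is exactly the positivity of $\theta$ that encodes the hypotheses $\beta>\max\{2,N/4\}$ and $(N-2s)\alpha<8-\tfrac{2N}{\beta}$. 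Then $C_1 T^{\theta}M^{\alpha}\le\tfrac12$ for $T$ small gives $\Phi(E(T,M))\subset E(T,M)$.

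For the contraction, $\Phi(u)-\Phi(v)=i\,G\bigl(K(f(u)-f(v))\bigr)$ and (\ref{fu}) gives $|f(u)-f(v)|\lesssim(|u|^{\alpha}+|v|^{\alpha})|u-v|$; estimating this difference only in the weaker norm underlying $d$ — which is the reason $d$ cannot carry $s$ spatial derivatives, since $f'$ need not be Lipschitz when $\alpha<1$ — and repeating the Hölder scheme with ordinary Strichartz inequalities would give $d(\Phi(u),\Phi(v))\le C_2 T^{\theta}M^{\alpha}d(u,v)$, so after shrinking $T$ once more $\Phi$ is a strict contraction. The resulting fixed point $u\in E(T,M)$ solves the Duhamel equation, and since Proposition~\ref{p1} also asserts $e^{it(\Delta^2+\mu\Delta)}\varphi,\,G(Kf(u))\in C(\R,H^s)$, I would obtain $u\in C([0,T],H^s)$. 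Uniqueness in $C([0,T],H^s)\cap\bigcap_{(q,r)\in\Lambda_b}L^q([0,T],B^s_{r,2})$ would follow by applying the contraction bound on consecutive short subintervals together with a connectedness argument; patching maximal existence intervals then defines $T_{\max}(\varphi)$ and the maximal solution. If $T_{\max}(\varphi)<\infty$ while $\|u(t_n)\|_{H^s}$ stayed bounded along some $t_n\uparrow T_{\max}(\varphi)$, solving (\ref{NLS}) from the datum $u(t_n)$ — whose local existence time is bounded below by a function of $\|u(t_n)\|_{H^s}$ alone — would extend $u$ beyond $T_{\max}(\varphi)$, a contradiction, which proves (\ref{bl1-1}). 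For continuous dependence, the fixed-point construction is stable under perturbation of the data on the first short interval, and this is propagated up to any $A<T_{\max}(\varphi)$ by covering $[0,A]$ with finitely many such intervals, which (after the routine upgrade of weak-metric convergence to $H^s$-convergence using the uniform bounds) gives $u_n\to u$ in $C([0,A],H^s)$.

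The hard part will not be this now-routine scheme but the content of Lemmas~\ref{l1}--\ref{l10}: verifying that the biharmonic admissible pairs and the auxiliary Hölder exponents can be chosen simultaneously for every $N\ge1$, every $0<s\le2$, and the entire subcritical range, so that the right-hand side of (\ref{11214}) closes with a strictly positive power of $T$ while absorbing both the local singularity of $K$ (only assumed to lie in $L^\infty+L^\beta$) and the merely $C^1$ — and, when $\alpha<1$, non-Lipschitz-gradient — nature of $f$. The flexibility in (\ref{11214}) of choosing the admissible pairs $(\gamma_0,\rho_0)$ and $(\gamma_1,\rho_1)$ independently, which is the main point of Proposition~\ref{p1}, is precisely what makes this bookkeeping feasible.
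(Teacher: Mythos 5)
Your overall architecture --- Duhamel formulation, contraction on a ball of the Strichartz--Besov space $\mathcal{X}^s$ equipped with the weaker metric $\sup_{(q,r)\in\Lambda_b}\|u-v\|_{L^qL^r}$, smallness from a positive power of $T$ supplied by the index bookkeeping of Lemmas \ref{l1}--\ref{l10}, blowup alternative from the lower bound on the existence time, and continuous dependence by iterating over finitely many short intervals --- is exactly the paper's (Proposition \ref{914} and Section \ref{5}). However, the way you feed $Kf(u)$ into the bilinear estimate (\ref{11214}) would not go through as written. You take the two factors to be the time-independent $K$ and $g=f(u)$, claim the averaged term containing $f(t-\tau)-f(t)$ ``vanishes outright,'' and propose to work with norms restricted to $[0,T]$. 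But Proposition \ref{p1} is a global-in-time estimate: $G$, the $|\tau|^{-s/4}$-weighted averages and all mixed norms live on $t\in\R$, and the difference characterization of $B^{s/4}_{q,2}(\R,L^r)$ does not localize to a finite interval without inserting a time cutoff. Once the cutoff is inserted --- the paper multiplies the nonlinearity by $\chi_T$ and solves (\ref{911}) globally in time --- the relevant factorization is $f=\chi_T(t)$, $g=K(x)(f(u)-f(v))$, and the term you discarded reappears as $(\chi_T(t-\tau)-\chi_T(t))K(f(u)-f(v))$. Far from vanishing, this is one of the two main terms to be estimated, and it is precisely where the paper harvests the factor $T^{\sigma}$ via $\|\chi_T\|_{B^{s/4}_{l,2}}\lesssim T^{1/l-s/4}$ (Lemmas \ref{l1}, \ref{l2} and the first half of Lemma \ref{l9}).

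The second gap is in the last step: the ``routine upgrade of weak-metric convergence to $H^s$-convergence using the uniform bounds'' is not routine; it is the main delicacy of the continuous dependence and the point on which the theorem improves \cite{Dinh2}. Interpolating the uniform $\mathcal{X}^s$ bounds against convergence in $L^qL^r$ only yields convergence in $L^qB^{s-\delta}_{r,2}$ for $\delta>0$ (Claim \ref{10221}), i.e.\ convergence with a loss of derivatives. To recover convergence in $H^s$ itself one must re-close the difference estimate in the full $\mathcal{X}^s$ norm, and for $\alpha<1$ this produces the non-Lipschitz remainder $1_{\alpha<1}\left\|u_n-v\right\|^{\alpha}_{L^{\widetilde{q}}_{\text{uloc},T}L^{Nr/(N-sr)}}\left\|v\right\|_{\mathcal{X}^s}$ of Lemmas \ref{l3}, \ref{l4} and \ref{l9}, which is sent to zero only through the auxiliary Sobolev embedding and the $\widetilde{q}<q$ H\"older step in Claim \ref{9176}. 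Your sketch correctly identifies why the contraction metric must be derivative-free when $f'$ is not Lipschitz, but it omits the mechanism that converts this into full $H^s$ continuity of the flow.
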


The argument used to derive Theorem \ref{T1} can also be applied to the classical inhomogeneous nonlinear Schr\"odinger equation (\ref{INLS}). More precisely, we can establish the  bilinear Strichartz's type estimate for (\ref{INLS}) in the spirit of Proposition \ref{p1}. Then we choose the index as in Lemmas \ref{l1}--\ref{l10} to  establish a series of estimates needed in the contraction argument.  Therefore this improves the above mentioned results in  \cite{An,Kim} on the validity of $\alpha $ and $b$ in the case $0<s<1$.

If $K(x)=\lambda \left|x\right|^{-b}$ and $f(u)= \left|u\right|^\alpha u$, with $ \lambda \in \mathbb{C},\ b>0,\ \alpha >0$, we then have the following result, which removes the lower bound $\alpha >\frac{2(1-b)}{N}$ made in \cite{Gu}.
\begin{corollary}\label{Co1}
	Let $N\ge1$, $0<b<\min\left\{\frac{N}{2},4\right\}$, $0<\alpha$, $\left(N-4\right)\alpha <8-2b$, $\lambda \in\mathbb{C}$ and $\mu=-1$ or $0$. Given $\varphi\in H^2(\R^N)$, there exists  $T_{\text{max}}(\varphi)\in(0,\infty]$ and a unique  maximum solution $ u\in C([0,T_{\text{max}}(\varphi)),H^2)\bigcap_{(q,r)\in\Lambda_b} L^{q}$ $\left.\left([0,T_{\text{max}}(\varphi)), B_{r, 2}^{2}\right) \right)$ to the  Cauchy problem
	\begin{equation}
		\begin{cases}
			i\partial_t  u+(\Delta^2+\mu\Delta)u+\lambda \left|x\right|^{-b}\left|u\right|^\alpha u=0,\\
			u(0,x)=\varphi(x),
		\end{cases}\notag
	\end{equation}
	with  the following blowup alternative holds:
	\begin{equation}
		\lim_{t\uparrow T_{\text{max}}}\|u(t)\|_{H^2}=\infty,\qquad \text{if } T_{\text{max}}(\varphi)<\infty.\notag
	\end{equation}
	Moreover, the continuous dependence upon the initial data holds.
\end{corollary}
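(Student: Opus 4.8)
The plan is to deduce Corollary \ref{Co1} directly from Theorem \ref{T1} by specializing to $s=2$, $f(u)=\lambda|u|^\alpha u$ and $K(x)=\lambda|x|^{-b}$, so essentially all the analytic content is already contained in Theorem \ref{T1} (existence of $T_{\max}$, the unique maximal solution in $C([0,T_{\max}),H^2)\cap\bigcap_{(q,r)\in\Lambda_b}L^q([0,T_{\max}),B^2_{r,2})$, the blow-up alternative, and continuous dependence). Two hypotheses of Theorem \ref{T1} then have to be checked in the present setting: that $\lambda|u|^\alpha u$ belongs to $\mathcal{C}(\alpha)$, and that $\lambda|x|^{-b}\in L^\infty(\R^N)+L^\beta(\R^N)$ for some admissible exponent $\beta>\max\{2,\frac N4\}$ for which the subcriticality condition of Theorem \ref{T1} holds. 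The first point is already recorded in the Remark after the definition of $\mathcal{C}(\alpha)$, so it remains to produce a good $\beta$.

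For the inhomogeneity I would use the standard splitting $|x|^{-b}=|x|^{-b}\mathbf 1_{\{|x|\le1\}}+|x|^{-b}\mathbf 1_{\{|x|>1\}}$. The tail $|x|^{-b}\mathbf 1_{\{|x|>1\}}$ is bounded by $1$, hence lies in $L^\infty(\R^N)$, while the local part $|x|^{-b}\mathbf 1_{\{|x|\le1\}}$ lies in $L^\beta(\R^N)$ exactly when $b\beta<N$, i.e. when $\beta<N/b$. Thus $K\in L^\infty(\R^N)+L^\beta(\R^N)$ for every $\beta<N/b$. Theorem \ref{T1} also requires $\beta>\max\{2,\frac N4\}$, so one needs the interval $\bigl(\max\{2,\tfrac N4\},\,N/b\bigr)$ to be non-empty, which is equivalent to $\max\{2,\tfrac N4\}<N/b$, that is, to $0<b<\min\{\tfrac N2,4\}$ — precisely the hypothesis of the corollary.

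Finally, I would pick $\beta$ inside that window close enough to $N/b$ to also meet the subcriticality condition of Theorem \ref{T1} at $s=2$, namely $(N-4)\alpha<8-\tfrac{2N}{\beta}$. Since $\tfrac{2N}{\beta}\to 2b$ as $\beta\uparrow N/b$, the right-hand side $8-\tfrac{2N}{\beta}$ increases to $8-2b$; as $8-2b>0$ always holds under $b<\min\{\tfrac N2,4\}$ (and the left-hand side is non-positive when $N\le4$), the assumption $(N-4)\alpha<8-2b$ guarantees $(N-4)\alpha<8-\tfrac{2N}{\beta}$ for all $\beta$ sufficiently close to $N/b$, and such $\beta$ can simultaneously be taken in $(\max\{2,\tfrac N4\},N/b)$ because $N/b>\max\{2,\tfrac N4\}$. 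With $\beta$ fixed in this way, Theorem \ref{T1} applies and yields exactly the statement of Corollary \ref{Co1}. I do not expect a substantive obstacle here; the only point needing care is the simultaneous compatibility of the three constraints on $\beta$ (the bound $\beta<N/b$ from the singularity of $K$, the lower bound $\beta>\max\{2,\tfrac N4\}$ from the Strichartz framework, and the subcriticality margin), and this compatibility is exactly what the hypotheses $0<b<\min\{\tfrac N2,4\}$ and $(N-4)\alpha<8-2b$ encode.
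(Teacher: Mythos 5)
Your proposal is correct and is exactly the intended derivation: the paper states Corollary \ref{Co1} as an immediate specialization of Theorem \ref{T1} (with $s=2$, $f(u)=\lambda|u|^\alpha u\in\mathcal{C}(\alpha)$ as noted in the Remark, and $|x|^{-b}$ split into $L^\beta$ near the origin plus $L^\infty$ at infinity), without writing out the details. Your verification that the three constraints on $\beta$ — namely $\beta<N/b$, $\beta>\max\{2,N/4\}$, and $(N-4)\alpha<8-\tfrac{2N}{\beta}$ — are simultaneously satisfiable precisely under $0<b<\min\{\tfrac N2,4\}$ and $(N-4)\alpha<8-2b$ is accurate and complete.
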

If $K(x)=\lambda $ with $\lambda \in \mathbb{C}$ and $f(u)=\left|u\right|^\alpha u$,  we then have the following result for fourth-order Schr\"odinger equation.
\begin{corollary}\label{Co2}
	Let $N\ge1$, $0<s\le2$, $0<\alpha$, $\left(N-2s\right)\alpha <8$, $\lambda \in\mathbb{C}$ and $\mu=-1$ or $0$. Given $\varphi\in H^s(\R^N)$, there exists  $T_{\text{max}}(\varphi)\in(0,\infty]$ and a unique  maximum solution $ u\in C([0,T_{\text{max}}(\varphi)),H^s)\bigcap_{(q,r)\in\Lambda_b} L^{q}$ $\left.\left([0,T_{\text{max}}(\varphi)), B_{r, 2}^{s}\right) \right)$ to the  Cauchy problem
	\begin{equation}
		\begin{cases}
			i\partial_t  u+(\Delta^2+\mu\Delta)u+\lambda \left|u\right|^\alpha u=0,\\
			u(0,x)=\varphi(x),
		\end{cases}\notag
	\end{equation}
	with  the following blowup alternative holds:
	\begin{equation}
		\lim_{t\uparrow T_{\text{max}}}\|u(t)\|_{H^s}=\infty,\qquad \text{if } T_{\text{max}}(\varphi)<\infty.\notag
	\end{equation}
	Moreover, the continuous dependence upon the initial data holds.
\end{corollary}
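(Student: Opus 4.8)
The plan is to obtain Corollary~\ref{Co2} as a direct specialization of Theorem~\ref{T1}; the only work is to check that the hypotheses of the latter are met for the present choice $K(x)\equiv\lambda$ and $f(u)=\left|u\right|^\alpha u$. First, the constant function $\lambda$ lies in $L^\infty(\R^N)$, hence trivially in $L^\infty(\R^N)+L^\beta(\R^N)$ for every $\beta$. Second, by the remark following the definition of the class $\mathcal{C}(\alpha)$, the pure power nonlinearity $f(u)=\left|u\right|^\alpha u$ (and more generally $\lambda\left|u\right|^\alpha u$, the constant $\lambda$ being harmless) belongs to $\mathcal{C}(\alpha)$. Both values $\mu=0$ and $\mu=-1$ are already allowed in Theorem~\ref{T1}. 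Hence the only condition left to arrange is the subcriticality inequality $(N-2s)\alpha<8-\frac{2N}{\beta}$ together with $\beta>\max\left\{2,\frac N4\right\}$.

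For this, observe that the hypothesis of the corollary is the strict inequality $(N-2s)\alpha<8$. Put $\ep:=8-(N-2s)\alpha>0$. We are free to enlarge $\beta$, since the constraints $\beta>\max\left\{2,\frac N4\right\}$ and $\frac{2N}{\beta}<\ep$ only bound $\beta$ from below; fixing any $\beta$ with $\beta>\max\left\{2,\frac N4,\frac{2N}{\ep}\right\}$ gives $8-\frac{2N}{\beta}>8-\ep=(N-2s)\alpha$, so all assumptions of Theorem~\ref{T1} hold with this $\beta$, this $K$ and this $f$.

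Applying Theorem~\ref{T1} then produces $T_{\text{max}}(\varphi)\in(0,\infty]$ and the unique maximal solution $u\in C([0,T_{\text{max}}(\varphi)),H^s)\cap\bigcap_{(q,r)\in\Lambda_b}L^q([0,T_{\text{max}}(\varphi)),B^s_{r,2})$ of the stated Cauchy problem, together with the blowup alternative and the continuous dependence of $u$ on $\varphi$ in $C([0,A],H^s)$ for any $0<A<T_{\text{max}}(\varphi)$; this is precisely the content of Corollary~\ref{Co2}. No real obstacle arises: the corollary is a pure specialization, and the one mild point — that the $\frac{2N}{\beta}$ defect in the subcriticality threshold can be absorbed — is exactly what the strict inequality $(N-2s)\alpha<8$ and the $L^\infty+L^\beta$ formulation of the coefficient condition are designed to accommodate.
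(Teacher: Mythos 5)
Your proposal is correct and is exactly the intended derivation: the paper gives no separate proof of Corollary \ref{Co2}, treating it as a direct specialization of Theorem \ref{T1} with $K\equiv\lambda\in L^\infty(\R^N)\subset L^\infty(\R^N)+L^\beta(\R^N)$, $f(u)=|u|^\alpha u\in\mathcal{C}(\alpha)$, and $\beta$ taken large enough that $\frac{2N}{\beta}<8-(N-2s)\alpha$. Your observation that $\beta$ is only bounded from below, so the defect $\frac{2N}{\beta}$ can be absorbed using the strict inequality $(N-2s)\alpha<8$, is precisely the point that makes the specialization work.
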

\begin{remark}
	Corollary \ref{Co2} improves the corresponding results of \cite{Guo2,Dinh2} in the case $0<s\le2$. In \cite{Guo2},  there is a additional assumption $s\le [\alpha ]$ ($[\alpha ]$ denotes the largest integer  less than or equal to $\alpha $) for the parameter $\alpha $; and in \cite{Dinh}, the continuous dependence   ($u_n\rightarrow u$ in $C([0,T],H^{s-\ep})$ with $\ep>0$)  is weaker than the expected one.
\end{remark}
In the sequel, we establish global existence results in energy space in the $L^2$-subcritical regime. We assume that $K(x)$ is a real-valued function, which will be used to establish the conservation of the mass.
\begin{theorem}\label{T2}
	Let $N\ge1$,  $\mu=0$ or $-1$,  $\beta>\max \left\{2,\frac{N}{4}\right\} $,  $0<\alpha \le\frac{8}{N}-\frac{2}{\beta}$,   $K(x)\in L^\infty (\R^N)+L^\beta(\R^N)$ be a real-valued function, and $f\in\mathcal{C}(\alpha )$ that satisfies  	\\
	(i) $f(a)\in \R$ for all $a\ge0$; \\
	(ii) $f(u)=\frac{u}{\left|u\right|}f(\left|u\right|)$ for all $u\in\mathbb{C}\setminus \left\{0\right\} $;\\
	Then the local solution   obtained in Theorem  \ref{T1} with the initial datum $\varphi$ can be extended  globally-in-time if one of the
	following alternatives holds:\\
	(i) $0<\alpha <\frac{8}{N}-\frac{2}{\beta}$, or\\
	(ii) $\alpha =\frac{8}{N}-\frac{2}{\beta}$ and $\left\|\varphi\right\|_{L^2}$ sufficiently small.
\end{theorem}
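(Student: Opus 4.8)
The plan is to upgrade the local $H^2$-solution from Theorem \ref{T1} — I work in the energy space, so take $s=2$ and $\varphi\in H^2(\R^N)$, and let $u\in C([0,T_{\max}),H^2)$ be the maximal solution — to a global one by producing an a priori bound $\sup_{0\le t<T_{\max}}\|u(t)\|_{H^2}<\infty$ and then invoking the blowup alternative \eqref{bl1-1}; the remaining assertions (uniqueness, continuous dependence) will then follow from Theorem \ref{T1} applied on each $[0,A]$, $A<T_{\max}$. Since $\|\cdot\|_{H^2}$ is equivalent to $(\|\cdot\|_{L^2}^2+\|\Delta\cdot\|_{L^2}^2)^{1/2}$, everything reduces to controlling $\|\Delta u(t)\|_{L^2}$, once I have conservation of mass. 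For the latter I will use that hypotheses (i)--(ii) together with $K$ real-valued force $f(u)\overline u=f(|u|)|u|\in\R$ pointwise, so that $\operatorname{Im}\int_{\R^N}K(x)f(u)\overline u\,dx=0$; combined with $u\in C^1([0,T_{\max}),H^{-2})$ read off from \eqref{NLS} (the range of $\alpha$ being $H^2$-subcritical puts $Kf(u)$ in $H^{-2}$), this gives $\tfrac{d}{dt}\|u(t)\|_{L^2}^2=0$, hence $M:=\|u(t)\|_{L^2}=\|\varphi\|_{L^2}$ is constant.

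Next I record conservation of energy. Using (i)--(ii) and $K\in\R$ again, $\operatorname{Re}\big(f(u)\overline{\partial_t u}\big)=\partial_t F(|u|)$ with $F(r):=\int_0^r f(\rho)\,d\rho$, so the energy
\[
E(v):=\tfrac12\|\Delta v\|_{L^2}^2-\tfrac{\mu}{2}\|\nabla v\|_{L^2}^2+\int_{\R^N}K(x)\,F(|v(x)|)\,dx
\]
is formally conserved. Because $E$ involves $\|\Delta v\|_{L^2}^2$, I would make this rigorous by regularization: pick smooth $\varphi_n\to\varphi$ in $H^2$, solve \eqref{NLS} by Theorem \ref{T1} (the contraction being $H^{s'}$-subcritical for every $s'>0$ since $\beta>\tfrac N4$, so the $u_n$ carry enough regularity for the energy identity to be exact), prove the a priori bound below for each $u_n$ with constants depending only on $\|\varphi_n\|_{H^2}$, and pass to the limit through the continuous dependence of Theorem \ref{T1}, which yields $u_n\to u$ in $C([0,A],H^2)$ for every $A<T_{\max}(\varphi)$.

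The main computation is a Gagliardo--Nirenberg bound on the potential energy. Splitting $K=K_1+K_2\in L^\infty+L^\beta$ and using $|f(r)|\lesssim r^{\alpha+1}$ (from \eqref{fu} with $v=0$), hence $|F(r)|\lesssim r^{\alpha+2}$, Hölder gives $\big|\int K\,F(|v|)\big|\lesssim \|K_1\|_{L^\infty}\|v\|_{L^{\alpha+2}}^{\alpha+2}+\|K_2\|_{L^\beta}\|v\|_{L^{(\alpha+2)\beta'}}^{\alpha+2}$. Then I apply the biharmonic Gagliardo--Nirenberg inequality $\|v\|_{L^p}\lesssim\|\Delta v\|_{L^2}^{\theta(p)}\|v\|_{L^2}^{1-\theta(p)}$, $\theta(p)=\tfrac N2(\tfrac12-\tfrac1p)$ — admissible for $p=\alpha+2$ and $p=(\alpha+2)\beta'$ exactly because $0<\alpha\le\tfrac8N-\tfrac2\beta$ and $\beta>\max\{2,\tfrac N4\}$ — and a short computation (using $\beta'=\beta/(\beta-1)$) gives $(\alpha+2)\theta(\alpha+2)=\tfrac{N\alpha}{4}$ and $(\alpha+2)\theta\big((\alpha+2)\beta'\big)=\tfrac{N\alpha}{4}+\tfrac{N}{2\beta}$, whence
\[
\Big|\int_{\R^N}K\,F(|v|)\,dx\Big|\le C(K,M)\Big(\|\Delta v\|_{L^2}^{\frac{N\alpha}{4}}+\|\Delta v\|_{L^2}^{\frac{N\alpha}{4}+\frac{N}{2\beta}}\Big),\qquad M=\|v\|_{L^2}.
\]
Since $\mu\le 0$, the term $-\tfrac\mu2\|\nabla v\|_{L^2}^2$ in $E$ is nonnegative and may be dropped, so from $E(u_n(t))=E(\varphi_n)$ and the last display,
\[
\tfrac12\|\Delta u_n(t)\|_{L^2}^2\le |E(\varphi_n)|+C(K,M_n)\Big(\|\Delta u_n(t)\|_{L^2}^{\frac{N\alpha}{4}}+\|\Delta u_n(t)\|_{L^2}^{\frac{N\alpha}{4}+\frac{N}{2\beta}}\Big),\quad M_n=\|\varphi_n\|_{L^2}.
\]

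It remains to close the estimate. In case (i), $\alpha<\tfrac8N-\tfrac2\beta$, both exponents $\tfrac{N\alpha}{4}$ and $\tfrac{N\alpha}{4}+\tfrac{N}{2\beta}$ are $<2$, and an inequality of the form $\tfrac12X^2\le A+B\big(X^{p_1}+X^{p_2}\big)$ with $p_1,p_2<2$ forces $X=\|\Delta u_n(t)\|_{L^2}$ into a bounded set depending only on $A,B$; hence $\sup_{t<T_{\max}(\varphi_n)}\|\Delta u_n(t)\|_{L^2}\le C(\|\varphi_n\|_{H^2})$. In case (ii), $\alpha=\tfrac8N-\tfrac2\beta$, one has $\tfrac{N\alpha}{4}+\tfrac{N}{2\beta}=2$ and $\tfrac{N\alpha}{4}<2$; tracking constants, the quadratic term is $C\|K_2\|_{L^\beta}M_n^{\alpha}\|\Delta u_n(t)\|_{L^2}^{2}$, so provided $\|\varphi\|_{L^2}$ is so small that $C\|K_2\|_{L^\beta}M_n^{\alpha}\le\tfrac14$ for all large $n$, I absorb it into the left side and the surviving power $\tfrac{N\alpha}{4}<2$ again yields a uniform bound. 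Because $\varphi_n\to\varphi$ in $H^2$ these bounds are uniform in $n$, and letting $n\to\infty$ with $u_n\to u$ in $C([0,A],H^2)$ gives $\sup_{0\le t<T_{\max}(\varphi)}\|\Delta u(t)\|_{L^2}<\infty$; together with mass conservation this yields $\sup_{t<T_{\max}}\|u(t)\|_{H^2}<\infty$, so \eqref{bl1-1} forces $T_{\max}(\varphi)=\infty$. I expect the main obstacle to be the rigorous derivation of the conservation laws for merely-$H^2$ data — the regularization and persistence-of-regularity step — together with the parameter bookkeeping that keeps the two Gagliardo--Nirenberg exponents admissible over the entire range; the absorption argument closing the estimate is routine.
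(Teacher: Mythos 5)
Your proposal is correct and follows essentially the same route as the paper: split $K=K_1+K_2\in L^\infty+L^\beta$, bound the potential energy by H\"older and the biharmonic Gagliardo--Nirenberg inequality with the exponents $\frac{N\alpha}{4}$ and $\frac{N\alpha}{4}+\frac{N}{2\beta}$ (the paper's Lemma 5.3), combine with mass and energy conservation, and close by Young's inequality in case (i) or by absorbing the quadratic term under a smallness condition on $\|\varphi\|_{L^2}$ in case (ii). The only differences are cosmetic (you drop $-\frac{\mu}{2}\|\nabla u\|_{L^2}^2$ by sign, whereas the paper bounds $\mu\|\nabla u\|_{L^2}^2\le\|u\|_{L^2}\|\Delta u\|_{L^2}$) or supply details the paper omits (the regularization argument justifying the conservation laws for $H^2$ data).
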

If $K(x)=\lambda \left|x\right|^{-b}$ and $f(u)= \left|u\right|^\alpha u$ with  $\lambda \in \mathbb{R}$, we then have the following result, which removes the lower bound $\alpha >\frac{2(1-b)}{N}$ made in \cite{Gu}.
\begin{corollary}
	Let $N\ge1$, $0<b<\min\left\{\frac{N}{2},4\right\}$, $0<\alpha\le\frac{8-2b}{N}$,  and  $ K(x)=\lambda |x|^{-b}$, $f(u)=|u|^\alpha u$, $\lambda\in\R$.  Then the local solution   obtained in Corollary  \ref{Co1} with the initial datum $\varphi$ can be extended  globally-in-time if one of the
	following alternatives holds:\\
	(i) $0<\alpha <\frac{8-2b}{N}$, or\\
	(ii) $\alpha =\frac{8-2b}{N}$ and $\left\|\varphi\right\|_{L^2}$ sufficiently small.
\end{corollary}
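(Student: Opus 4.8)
The plan is to obtain this corollary as the specialization of Theorem~\ref{T2} to $K(x)=\lambda|x|^{-b}$ and $f(u)=|u|^{\alpha}u$ with $\lambda\in\R$. Concretely, one checks the structural hypotheses of Theorem~\ref{T2} on $K$ and $f$ and then selects the auxiliary exponent $\beta$ so that $\alpha$ falls inside the admissible window $0<\alpha\le\frac8N-\frac2\beta$. The strictly mass-subcritical range $0<\alpha<\frac{8-2b}N$ will come out at once; the mass-critical endpoint $\alpha=\frac{8-2b}N$ is borderline and is where the real work lies.

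First I would dispose of the hypotheses that do not involve $\beta$. Writing $\lambda|x|^{-b}=\lambda|x|^{-b}\mathbf{1}_{\{|x|\le1\}}+\lambda|x|^{-b}\mathbf{1}_{\{|x|>1\}}$, the second term is bounded, while the first belongs to $L^\beta(\R^N)$ precisely when $b\beta<N$, i.e.\ $\beta<N/b$; since $0<b<\min\{N/2,4\}$ we have $N/b>\max\{2,N/4\}$, so the interval $\bigl(\max\{2,N/4\},\,N/b\bigr)$ is nonempty and any $\beta$ in it gives $K\in L^\infty(\R^N)+L^\beta(\R^N)$ with $\beta>\max\{2,N/4\}$. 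For the nonlinearity, $f(u)=\lambda|u|^{\alpha}u$ lies in $\mathcal C(\alpha)$ (as recorded in the Remark following the definition of $\mathcal C(\alpha)$), and since $\lambda\in\R$ one has $f(a)=\lambda a^{\alpha+1}\in\R$ for all $a\ge0$ and $f(u)=\lambda|u|^{\alpha}u=\frac u{|u|}\,\lambda|u|^{\alpha+1}=\frac u{|u|}f(|u|)$ for $u\ne0$, so assumptions (i)--(ii) of Theorem~\ref{T2} hold; in particular $f$ admits a real primitive, which is precisely what underlies the Hamiltonian structure used in the proof of Theorem~\ref{T2}.

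For the subcritical case $0<\alpha<\frac{8-2b}N$ I would note that $\frac8N-\frac2\beta\to\frac{8-2b}N$ as $\beta\uparrow N/b$, so one can fix $\beta\in\bigl(\max\{2,N/4\},\,N/b\bigr)$ with $\alpha<\frac8N-\frac2\beta$; Corollary~\ref{Co1} supplies the local $H^2$-solution, alternative (i) of Theorem~\ref{T2} extends it globally, and the continuous dependence is inherited from Theorem~\ref{T1}/Corollary~\ref{Co1}. The mechanism is the standard one: mass conservation (available because $K$ is real-valued) together with conservation of the energy and a Gagliardo--Nirenberg estimate bounding the $K$-weighted potential energy by $\|u\|_{L^2}^{\theta_1}\|\Delta u\|_{L^2}^{\theta_2}$ with $\theta_2<2$, which yields an a priori bound on $\|u(t)\|_{H^2}$ and hence $T_{\max}=\infty$ via the blow-up alternative.

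The mass-critical endpoint $\alpha=\frac{8-2b}N$ is the genuine obstacle: the choice above would force $\beta=N/b$, but $|x|^{-b}\mathbf{1}_{\{|x|\le1\}}\notin L^{N/b}(\R^N)$, so Theorem~\ref{T2} cannot be quoted verbatim. Here I would exploit scale-invariance: $|x|^{-b}\in L^{N/b,\infty}(\R^N)$, and the Strichartz and nonlinear estimates behind the proof of Theorem~\ref{T2} persist with $L^{N/b}$ replaced by the Lorentz space $L^{N/b,\infty}$ (via H\"older's inequality in Lorentz spaces), giving global-in-time control of the Duhamel term as in alternative (ii). Smallness of $\|\varphi\|_{L^2}$ together with conservation of the mass then lets the nonlinear term be absorbed and a global bound for $\|u(t)\|_{H^2}$ be closed through the sharp ($|x|^{-b}$-weighted) Gagliardo--Nirenberg inequality, in the spirit of Weinstein's threshold, after which the blow-up alternative in Corollary~\ref{Co1} gives $T_{\max}=\infty$. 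Re-running this continuation argument cleanly in Lorentz spaces, rather than invoking Theorem~\ref{T2}, is the step I expect to demand the most care.
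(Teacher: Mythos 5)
Your proposal is essentially correct, and it is in fact more careful than the paper, which states this corollary without proof and evidently intends it as a direct specialization of Theorem \ref{T2}. You rightly observe that this specialization only works verbatim in the subcritical case: since $|x|^{-b}\mathbf{1}_{\{|x|\le1\}}\in L^{\beta}$ forces $\beta<N/b$, one always has $\frac{8}{N}-\frac{2}{\beta}<\frac{8-2b}{N}$, so case (i) follows by taking $\beta$ close to $N/b$, whereas the endpoint $\alpha=\frac{8-2b}{N}$ is genuinely outside the range of Theorem \ref{T2} for every admissible $\beta$. The repair you propose is the right one and amounts to replacing the paper's Lemma \ref{10291} (proved by H\"older against $K_2\in L^{\beta}$ plus Gagliardo--Nirenberg) by the sharp weighted inequality $\int_{\R^N}|x|^{-b}|u|^{\alpha+2}\,dx\lesssim\|\Delta u\|_{L^2}^{\frac{N\alpha+2b}{4}}\|u\|_{L^2}^{\alpha+2-\frac{N\alpha+2b}{4}}$, whose $\|\Delta u\|_{L^2}$-exponent equals exactly $2$ at $\alpha=\frac{8-2b}{N}$, so that small mass closes the energy estimate exactly as in alternative (ii) of the proof of Theorem \ref{T2}. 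One remark: you overestimate how much must be redone in Lorentz spaces. The local theory at the endpoint is already supplied by Corollary \ref{Co1}, since $(N-4)\alpha=(N-4)\frac{8-2b}{N}<8-2b$ is a strict inequality and hence tolerates $\beta$ slightly below $N/b$; and the blow-up alternative there is phrased in terms of $\|u(t)\|_{H^2}$ alone. So there is no need to rerun the Strichartz, Duhamel, or contraction estimates in $L^{N/b,\infty}$ --- the only place the Lorentz/Caffarelli--Kohn--Nirenberg refinement is needed is the single potential-energy bound above, after which conservation of mass and energy plus the blow-up alternative give $T_{\max}=\infty$. Your checks of the structural hypotheses ($K\in L^\infty+L^\beta$ for $\beta\in(\max\{2,N/4\},N/b)$, $f\in\mathcal C(\alpha)$, and conditions (i)--(ii) on $f$) are all correct.
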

When $K(x)$ is a complex-valued function, the solutions to (\ref{NLS}) may blow up in finite time. In fact, for any given compact set $M\subset\R^N$, it was proved in \cite{xuan3} that, for $K(x)=\lambda, \ f(u)=\left|u\right| ^\alpha u$ with $\text{Im}\lambda >0,\ 0<\alpha <\frac{8}{N-8}$, there exists a class of  solutions to (\ref{NLS}), which blows up exactly on $M$.

The rest of the paper is organized as follows. In Section \ref{s2}, we introduce some notations and give a review of the biharmonic Strichartz's estimates. In Section \ref{s3}, we establish the  bilinear   Strichartz's type estimate. In Section \ref{s4}, we establish the nonlinear estimates that are needed in the contraction argument. In Section \ref{5}, we give the proof of Theorem \ref{T1}
and Theorem \ref{T2}.
\section{Preliminary}\label{s2}
If $X, Y$ are nonnegative quantities, we sometimes use $X\lesssim Y$ to denote the estimate $X\leq CY$ for some positive constant $C$. Pairs of conjugate indices are written as $p$ and $p'$, where $1\leq p\leq\infty$ and $\frac1p+\frac1{p'}=1$. We use $L^p (\mathbb{R}^N)$ to denote the usual Lebesgue space and  $L^\gamma(I,L^\rho(\mathbb{R}^N))$ to denote the space-time Lebesgue spaces with the  norm
\begin{gather}\notag
	\|f\|_{L^\gamma(I,L^\rho(\mathbb{R}^N))}:=\left(\int_I\|f\|_{L_x^\rho}^\gamma dt\right)^{1/\gamma}
\end{gather}
for any time slab $I\subset\mathbb{R}$, with the usual modification when either $\gamma$ or $\rho$ is infinity.
We also define the  Fourier transform on $\R,\R^N$ and $\R^{1+N}$   by
\begin{align*}
	&\hat f(\tau)=\int_\R f(t)e^{-it\tau}dt,\qquad\qquad\quad\qquad\qquad\tau\in\R;\\
	&\hat f(\xi)=\int_{\R^N}f(x)e^{-ix\cdot\xi}d x,\qquad\quad\qquad\qquad\xi\in\R^N;\\
	&\widetilde f(\tau,\xi)=\int_{\R^{1+N}}f(t,x) e^{-ix\cdot\xi-it\tau}dxdt,\quad(\tau,\xi)\in\R\times\R^N,
\end{align*}
respectively.

Next, we  review the definition of Besov spaces.  Let $\phi$ be a smooth function whose Fourier transform $\hat\phi$ is a non-negative even function which satisfies supp $\hat\phi\subset\{\tau\in\R,1/2\le|\tau|\le2\}$ and $\sum_{k=-\infty}^{\infty}\hat\phi(\tau/{2^k})=1$ for any $\tau\neq0$. For $k\in\mathbb{Z}$, we put  $\hat\phi_k(\cdot)=\hat\phi(\cdot/{2^k})$ and $\psi=\sum_{j=-\infty}^0\phi_j$.   Moreover, we define  $\chi_k=\sum_{k-2}^{k+2}\phi_j$ for $k\ge1$ and $\chi_0=\psi+\phi_1+\phi_2$.  For $s\in\R$ and $1 \leq p$, $q \leq \infty$, we define  the Besov space
$$
B_{p,q}^{s}\left({\R}^{N}\right)=\left\{u \in \mathcal{S}^{\prime}\left(\R^{N}\right), \|u\|_{B_{p,q}^{s}\left(\R^{N}\right)}<\infty\right\}
$$
where $\mathcal{S}^{\prime}\left({\R}^{N}\right)$ is the space of tempered distributions on $\R^{N},$ and
$$
\|u\|_{B_{p, q}^{s}\left(\R^{N}\right)}=\left\|\psi *_{x} u\right\|_{L^{p}\left(\R^{N}\right)}+\left\{\begin{array}{ll}
	\left\{\sum_{k \geq 1}\left(2^{s k}\left\|\phi_{k} *_{x} u\right\|_{L^{p}\left(\R^{N}\right)}\right)^{q}\right\}^{1 / q}, & q<\infty, \\
	\sup _{k \geq 1} 2^{s k}\left\|\phi_{k}*_{x} u\right\|_{L^{p}\left(\R^{N}\right)}, & q=\infty,
\end{array}\right.
$$
where $*_{x}$ denotes the convolution with respect to the variables in $\R^{N}$.  Here we use $\phi_k*_xu$ to denote $\phi_k(|\cdot|)*_xu$. We also define $\chi_k*_xu,\psi*_xu,\chi_0*_xu$ similarly. This is an abuse of symbol, but no confusion is likely to arise.

For $1\le q$, $\alpha\le \infty $ and a Banach space $V$, we denote the  vector-valued  Lebesgue space for functions on $\R$ to $V$ by $L^q\left(\R,V\right)$. Then we define the  vector-valued  Sobolev space $$H^{1,q}\left(\R,V\right)=\left\{u:u\in L^q\left(\R,V\right),\partial_tu\in L^q \left(\R,V\right)\right\}.$$
Finally, we  define the Besov space of vector-valued functions. Let $\theta \in\R$, $1 \leq q$, $\alpha \leq \infty$ and $V$ be a Banach space. We put
$$
B_{q, \alpha}^{\theta}(\R, V)=\left\{u \in \mathcal{S}^{\prime}(\R, V) ;\|u\|_{B_{q, \alpha}^{\theta}(\R, V)}<\infty\right\},
$$
where
\begin{equation}\label{9221}
	\|u\|_{B_{q, \alpha}^{\theta}(\R, V)}=\left\|\psi *_{t} u\right\|_{L^{q}(\R, V)}+\left\{\sum_{k \geq 1}\left(2^{\theta k}\left\|\phi_{k} *_{t} u\right\|_{L^{q}(\R, V)}\right)^{\alpha}\right\}^{1 / \alpha}
\end{equation}
with trivial modification if $\alpha=\infty.$ Here $*_{t}$ denotes the convolution in
$\R$. Moreover, it is well-known that the norm (\ref{9221}) has the following equivalence
\begin{equation}
	\|u\|_{B_{q, \alpha}^{\theta}(\R, V)}\approx \left\|u\right\|_{L^q(\R,V)}+\left(\int_{-\infty }^{\infty }\left(\tau^{-s}\left\|u(t)-u(t-\tau)\right\|_{L^q(\R,V)}\right)^q \frac{d \tau}{\left|\tau\right|}  \right)^{1/q}. \notag
\end{equation}

Following standard notations, we introduce Schr\"odinger admissible pair as well as the corresponding Strichartz's estimate for the biharmonic Schr\"odinger equation.
\begin{definition}\label{bpair}
	A pair of Lebesgue space exponents $(\gamma, \rho)$ is called  biharmonic Schr\"odinger admissible for the equation (\ref{NLS}) if  $(\gamma, \rho)\in \Lambda_b$ where
	\begin{equation*}
		\Lambda_b=\{(\gamma, \rho):2\leq \gamma, \rho\leq\infty, \  \frac4\gamma+\frac N\rho=\frac N2, \  (\gamma, \rho, N)\neq(2, \infty, 4)\}.
	\end{equation*}
\end{definition}
\begin{lemma}[Strichartz's estimate  for BNLS]\label{L2.2S}
	Suppose that $(\gamma,\rho)$, $(a,b)\in\Lambda_b $ are  two biharmonic admissible pairs, and  $\mu=0$ or $-1$. Then for any  $u\in L^2(\mathbb{R}^N)$ and $h\in L^{a'}(\R,L^{b'}(\mathbb{R}^N))$, we have
	\begin{gather}\label{sz}
		\|e^{it(\Delta^2+\mu\Delta)}u\|_{L^\gamma L^\rho}\lesssim \|u\|_{L^2},
	\end{gather}
	\begin{equation}\label{SZ}
		\left\|\int_0^te^{i(t-s)(\Delta^2+\mu\Delta)}h(s)\ ds\right\|_{L^\gamma L^\rho}\lesssim  \|h\|_{L^{a'}L^{b'}},
	\end{equation}
	\begin{equation}
		\label{SZ1}
		\left\|\int_t^\infty e^{i(t-s)(\Delta^2+\mu\Delta)}h(s)\ ds \right\|_{L^\gamma L^\rho}\lesssim \|h\|_{L^{a'}L^{b'}}.
	\end{equation}
\end{lemma}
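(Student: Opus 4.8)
\textbf{Proof proposal for Lemma \ref{L2.2S}.}

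The plan is to derive all three inequalities \eqref{sz}--\eqref{SZ1} from the two standard pillars of the Strichartz machinery: a dispersive (pointwise-in-time decay) estimate for the propagator $e^{it(\Delta^2+\mu\Delta)}$ together with the trivial $L^2$ conservation law, followed by the abstract $TT^*$ and Christ--Kiselev arguments. First I would record the fixed-time bounds for the linear propagator. On the Fourier side the kernel has symbol $e^{it(|\xi|^4-\mu|\xi|^2)}$, which is unitary on $L^2$, giving $\|e^{it(\Delta^2+\mu\Delta)}u\|_{L^2}=\|u\|_{L^2}$. The key analytic input is the dispersive decay
\begin{equation}\label{eq:disp}
	\|e^{it(\Delta^2+\mu\Delta)}u\|_{L^\infty}\lesssim |t|^{-N/4}\|u\|_{L^1},\qquad t\neq0,
\end{equation}
which for $\mu=0$ follows from stationary phase applied to the oscillatory integral with phase $x\cdot\xi+t|\xi|^4$ (the Hessian of $|\xi|^4$ is nondegenerate away from the origin and one handles the low-frequency part by a separate scaling/Van der Corput estimate), and for $\mu=-1$ from the same analysis with the lower-order term $t|\xi|^2$ absorbed as a harmless perturbation of the phase. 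Interpolating \eqref{eq:disp} with the $L^2$ isometry yields the family $\|e^{it(\Delta^2+\mu\Delta)}u\|_{L^\rho}\lesssim |t|^{-\frac N4(1-\frac2\rho)}\|u\|_{L^{\rho'}}$ for $2\le\rho\le\infty$.

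Next I would run the standard $TT^*$ argument. Writing $T:u\mapsto e^{it(\Delta^2+\mu\Delta)}u$, the homogeneous estimate \eqref{sz} is equivalent to the boundedness of $TT^*$ from $L^{\gamma'}L^{\rho'}$ to $L^\gamma L^\rho$, where $(TT^*h)(t)=\int_\R e^{i(t-s)(\Delta^2+\mu\Delta)}h(s)\,ds$. Inserting the interpolated decay estimate into the inner integral gives the weak-type convolution bound governed by the kernel $|t-s|^{-\frac N4(1-\frac2\rho)}$; the scaling condition $\frac4\gamma+\frac N\rho=\frac N2$ is exactly what makes the Hardy--Littlewood--Sobolev inequality applicable with the correct exponents, and the endpoint exclusion $(\gamma,\rho,N)\neq(2,\infty,4)$ is the familiar Keel--Tao endpoint, which I would handle by invoking their abstract bilinear-interpolation endpoint theorem (whose hypotheses are precisely the $L^2$ boundedness plus the untruncated decay estimate \eqref{eq:disp}). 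This establishes \eqref{sz}, and the same abstract theorem delivers the \emph{global} (untruncated) inhomogeneous estimate $\bigl\|\int_\R e^{i(t-s)(\Delta^2+\mu\Delta)}h(s)\,ds\bigr\|_{L^\gamma L^\rho}\lesssim\|h\|_{L^{a'}L^{b'}}$ for any two admissible pairs $(\gamma,\rho),(a,b)$.

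The final step is to pass from the untruncated operator to the retarded and advanced truncations appearing in \eqref{SZ} and \eqref{SZ1}. For this I would apply the Christ--Kiselev lemma: since $\int_0^t$ and $\int_t^\infty$ are the causal and anti-causal truncations of the globally bounded operator, and since the relevant exponents satisfy $a'<\gamma$ (the strict inequality that fails only at the double endpoint, which I treat separately by a direct argument), the truncated operators inherit the bound from the untruncated one. The main obstacle is the dispersive estimate \eqref{eq:disp}: unlike the classical Schr\"odinger case, the phase $|\xi|^4$ degenerates at the origin, so a naive stationary-phase argument breaks down at low frequencies, and one must combine a Littlewood--Paley decomposition with scaling so that each dyadic piece contributes the uniform $|t|^{-N/4}$ decay; verifying the endpoint case then requires the abstract Keel--Tao framework rather than bare Hardy--Littlewood--Sobolev. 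Once \eqref{eq:disp} is in hand, the remaining deductions are routine applications of $TT^*$, Hardy--Littlewood--Sobolev, Keel--Tao, and Christ--Kiselev.
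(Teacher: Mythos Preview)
Your outline is correct and matches the content of what the paper invokes: the paper does not give a self-contained argument here but simply cites Pausader for \eqref{sz} and \eqref{SZ} and Cazenave's Corollary~2.3.6 for \eqref{SZ1}, and those references carry out exactly the program you sketch (dispersive decay $|t|^{-N/4}$ for the fourth-order group, Keel--Tao $TT^*$ at the endpoint, then the standard truncation argument). One small caution: for $\mu=-1$ the phrase ``harmless perturbation of the phase'' undersells the work --- Pausader in fact proves a two-regime dispersive bound distinguishing $|t|\le1$ and $|t|\ge1$, since the $|\xi|^2$ part of the phase genuinely changes the stationary-phase geometry at low frequency --- but the upshot is still a decay estimate compatible with the Keel--Tao hypotheses, so your overall plan goes through.
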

\begin{proof}
	The estimates (\ref{sz}) and (\ref{SZ1}) are proved in \cite{Pa}; and the proof of (\ref{SZ1}) is follows from an obvious adaptation of Corollary 2.3.6 in \cite{Ca9}.
\end{proof}
In this paper, we  omit the integral domain for simplicity unless noted otherwise. For example, we write $ L^qL^r= L^q\left(\R,L^r(\R^N)\right)$,  $L^qB^s_{r,2}=L^q\left(\R,B^s_{r,2}(\R^N)\right)$ and $B^\theta_{q,2}L^r=B^\theta_{q,2}(\R,L^r(\R^N))$ etc.

\section{Bilinear  Strichartz's type  estimate }\label{s3}
In this section we  prove Proposition \ref{p1}. First, we prepare several lemmas. We assume the functions $\phi,\chi_0,\psi,\phi_j,\chi_j$ are defined in Section \ref{s2}.
\begin{lemma}[Lemma 3.1 in \cite{xuan}]\label{L1}
	Assume $N\ge1$, $\mu=-1$ or $0$, and  $K_j(t,x)(j\ge1):\R\times\R^N\rightarrow\mathbb{C}$ are defined by
	\begin{equation}
		K_j(t,x)=\fc1{(2\pi)^{1+N}}\int e^{it\tau+ix\cdot \xi}\fc{\hat\phi_j(|\xi|^4-\mu|\xi|^2)(1-\hat\chi_j(\tau))}{i(\tau-|\xi|^4+\mu|\xi|^2)} d\tau  d\xi.\notag
	\end{equation}
	Then for any $0<s<4$, $1\le q\le\infty$, $1\le r\le\infty$ with $\fc4{q}-N(1-\fc1{ r})=s$, we have
	\begin{equation*}
		\|K_j\|_{L^{ q}L^{ r}}\le C2^{-js/4},
	\end{equation*}
	where the constant $C$ is independent of $j\ge1$.
\end{lemma}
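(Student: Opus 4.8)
The exponent $2^{-js/4}$ points to a fourth--order parabolic rescaling, and the cut--off $1-\hat\chi_j(\tau)$ is precisely what removes the singularity of $K_j$ on the characteristic set $\{\tau=|\xi|^4-\mu|\xi|^2\}$ and turns $K_j$ into a decaying kernel. I would therefore first substitute $\tau=2^j\sigma$, $\xi=2^{j/4}\eta$. Writing $\omega_j(\eta):=|\eta|^4-\mu2^{-j/2}|\eta|^2$, so that $|\xi|^4-\mu|\xi|^2=2^j\omega_j(\eta)$ and $\hat\phi_j(|\xi|^4-\mu|\xi|^2)=\hat\phi(\omega_j(\eta))$, and noting that $\hat\chi_j(2^j\sigma)=\hat\chi_*(\sigma)$ with $\hat\chi_*(\sigma):=\sum_{|m|\le2}\hat\phi(\sigma/2^m)$ a fixed, $j$--independent function which equals $1$ identically on $[1/2,2]$, one obtains $K_j(t,x)=2^{jN/4}L_j(2^jt,2^{j/4}x)$ with
\[
 L_j(s,y)=\frac1{(2\pi)^{1+N}}\int e^{is\sigma+iy\cdot\eta}\,\frac{\hat\phi(\omega_j(\eta))\,(1-\hat\chi_*(\sigma))}{i(\sigma-\omega_j(\eta))}\,d\sigma\,d\eta.
\]
A change of variables, using the scaling identity $\frac4q-N(1-\frac1r)=s$, gives $\|K_j\|_{L^qL^r}=2^{-js/4}\|L_j\|_{L^qL^r}$, so it remains to prove $\|L_j\|_{L^qL^r}\lesssim1$ \emph{uniformly in $j\ge1$} (for all $1\le q,r\le\infty$). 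The case $\mu=-1$ enters only through the $j$--dependence of $\omega_j$, but $\omega_j$ together with all its $\eta$--derivatives is bounded uniformly in $j\ge1$ on the fixed compact annulus $\{1/2\le|\eta|\le2\}$, which contains the support of $\eta\mapsto\hat\phi(\omega_j(\eta))$ and on which $\omega_j$ takes values in $[1/2,2]$; this is all that is used.

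\emph{Regular part.} Split $1-\hat\chi_*(\sigma)=A(\sigma)+B(\sigma)$, with $A$ smooth and supported in $\{|\sigma|\le1/4\}$ and $B=1-\hat\psi(\sigma/4)$ smooth and supported in $\{|\sigma|\ge4\}$ (both fixed functions coming from rescalings of the Littlewood--Paley pieces; here $\hat\psi$ is, as usual, the smooth function equal to $1$ near the origin with support in $\{|\cdot|\le2\}$). Accordingly $L_j=L_j^A+L_j^B$. Since $A$ vanishes on the range $[1/2,2]$ of $\omega_j$ over the relevant support, the amplitude $A(\sigma)\hat\phi(\omega_j(\eta))/i(\sigma-\omega_j(\eta))$ is smooth and supported in a fixed compact subset of $\R\times\R^N$ with all derivatives bounded uniformly in $j$; hence $L_j^A$ is a Schwartz function with seminorms uniform in $j$, and $\|L_j^A\|_{L^qL^r}\lesssim1$.

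\emph{The piece $L_j^B$ (main point).} Here $B$ does not decay at infinity, so decay of $L_j^B$ in $s$ cannot come from the size of the symbol --- it must come from the non--resonance gap. Since $B$ vanishes where $\omega_j$ takes its values, the factor $1/i(\sigma-\omega_j(\eta))$ is unambiguous; computing $\mathcal F^{-1}_\sigma$ of it with the causal convention $\mathcal F^{-1}_\sigma[1/i(\sigma-a-i0)]=H(\cdot)e^{ia\cdot}$ together with $\mathcal F^{-1}_\sigma[B](u)=\delta(u)-4\psi(4u)$, a short convolution computation gives
\[
 L_j^B(s,y)=\frac1{(2\pi)^N}\int_{\R^N}e^{iy\cdot\eta}\,\hat\phi(\omega_j(\eta))\,e^{is\omega_j(\eta)}\,r\bigl(s,\omega_j(\eta)\bigr)\,d\eta,\qquad r(s,a):=H(s)-\int_{-\infty}^{4s}\psi(w)\,e^{-i(a/4)w}\,dw.
\]
Because $a=\omega_j(\eta)\in[1/2,2]$ forces $\int_{\R}\psi(w)e^{-i(a/4)w}\,dw=\hat\psi(a/4)=1$, the function $r$ is merely a tail of the Schwartz function $\psi$ and obeys $|\partial_s^{\ell}\partial_a^{m}r(s,a)|\lesssim_{M,\ell,m}(1+|s|)^{-M}$ for every $M,\ell,m$, uniformly for $a\in[1/2,2]$. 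Thus the $\eta$--integrand above is smooth, supported in $\{1/2\le|\eta|\le2\}$, with $W^{L,1}_\eta$--norm $\lesssim_L(1+|s|)^{L-M}$; integrating by parts $L$ times in $\eta$ and choosing $M=2L$ gives $|L_j^B(s,y)|\lesssim_L(1+|s|)^{-L}(1+|y|)^{-L}$, so $L_j^B$ is Schwartz uniformly in $j$ and $\|L_j^B\|_{L^qL^r}\lesssim1$. Combining with the previous paragraph, $\|L_j\|_{L^qL^r}\lesssim1$ uniformly in $j$, and the rescaling yields $\|K_j\|_{L^qL^r}\le C2^{-js/4}$.

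The step I expect to be the main obstacle is the treatment of $L_j^B$: it carries the temporal frequencies $|\tau|\gg|\xi|^4-\mu|\xi|^2$, does not decay in time by any crude estimate, and its decay has to be extracted from the structural fact that $1-\hat\chi_j$ vanishes on a \emph{fixed} neighbourhood of the characteristic set --- this is the whole reason $\hat\chi_j$ is built into $K_j$ --- while keeping every bound uniform in $j$ (for $\mu=-1$ the uniformity reduces to the elementary fact that $\omega_j$ is a uniformly controlled symbol on the frequency annulus). By contrast, the rescaling and the regular part $L_j^A$ are essentially bookkeeping.
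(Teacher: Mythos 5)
The paper does not actually prove this lemma here --- it is imported verbatim as Lemma 3.1 of \cite{xuan} --- so there is no in-paper argument to compare yours against; I can only assess your proof on its own terms, and it is correct. The rescaling identity $\|K_j\|_{L^qL^r}=2^{-js/4}\|L_j\|_{L^qL^r}$ does follow from $\frac4q-N(1-\frac1r)=s$; for $j\ge1$ and $\mu\in\{0,-1\}$ one has $\omega_j\ge0$, so on the support of $\hat\phi(\omega_j(\eta))$ indeed $\omega_j(\eta)\in[1/2,2]$ (not merely $|\omega_j|\in[1/2,2]$) and $\eta$ lies in a fixed annulus uniformly in $j$, with all derivatives of $\omega_j$ uniformly controlled; and your decomposition is exactly $A=\hat\psi(\sigma/4)-\hat\chi_*(\sigma)=\sum_{m\le-3}\hat\phi(\sigma/2^m)$, $B=\sum_{m\ge3}\hat\phi(\sigma/2^m)$, both vanishing on $[1/4,4]$, which supplies the uniform gap $|\sigma-\omega_j(\eta)|\ge1/4$. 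The identification of $L_j^B$ via $r(s,a)$ and the key cancellation $\hat\psi(a/4)\equiv1$ (hence $\partial_a^m\hat\psi(a/4)=0$ for $m\ge1$) for $a\in[1/2,2]$, which turns $r$ and all its $a$-derivatives into rapidly decaying tails of the Schwartz function $\psi$, is the right mechanism and is carried out correctly. Two cosmetic points only: (i) $r(\cdot,a)$ has a unit jump at $s=0$ inherited from the Heaviside factor, so the asserted bound on $\partial_s^{\ell}r$ for $\ell\ge1$ should be read for $s\ne0$ --- harmless, since your integration by parts is purely in $\eta$ and uses only $\partial_a^m r$ together with the powers of $s$ produced by $e^{is\omega_j(\eta)}$; (ii) the $\sigma$-integral on the support of $B$ is not absolutely convergent, so the convolution identity should be (and implicitly is) justified distributionally, using that $B$ vanishes near $\sigma=a$ so that $B(\sigma)/i(\sigma-a)$ is an unambiguous tempered distribution. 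Neither affects the validity of the argument, and the resulting bounds $|L_j(s,y)|\lesssim_L(1+|s|)^{-L}(1+|y|)^{-L}$ give $\|L_j\|_{L^qL^r}\lesssim1$ uniformly in $j$ and in $(q,r)\in[1,\infty]^2$, as required.
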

\begin{lemma}[Lemma 3.3 in \cite{xuan}]\label{dj}
	Let $s\in\R$, $1\le p$, $q\le\infty$, then the norm defined by
	\begin{eqnarray*}
		&&\|u\|_{\widetilde{B }_{p, q}^{s}\left(\mathbb{R}^{n}\right)}:=\left\|\left(\mathcal{F}_{\xi}^{-1}\left(\widehat{\psi}\left(|\xi|^{4}-\mu|\xi|^2\right)\right)\right) *_{x} u\right\|_{L^{p}\left(\mathbb{R}^{n}\right)} \\
		&&+\begin{cases}\left\{\sum_{j \geq 1}\left(2^{s j / 4}\left\|\left(\mathcal{F}_{\xi}^{-1}\left(\widehat{\phi}\left((|\xi|^{4}-\mu|\xi|^2)/{2^{j}}\right)\right)\right) *_{x} u\right\|_{L^{p}\left(\mathbb{R}^{n}\right)}\right)^{q}\right\}^{1 / q},\text{ if }q<\infty,\\
			\sup _{j \geq 1} 2^{s j / 4}\left\|\left(\mathcal{F}_{\xi}^{-1}\left(\widehat{\phi}\left((|\xi|^{4}-\mu|\xi|^2)/{2^{j}}\right)\right)\right) *_{x} u\right\|_{L^{p}\left(\mathbb{R}^{n}\right)},\text{ if }q=\infty,\end{cases}
	\end{eqnarray*}
	is equivalent to the norm $\|u\|_{B^s_{p,q}}(\R^N)$ for any function $u$.
\end{lemma}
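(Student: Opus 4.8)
The plan is to show that the modified decomposition, whose Littlewood--Paley pieces are localized by the elliptic symbol $g(\xi):=|\xi|^4-\mu|\xi|^2$ rather than by $|\xi|$, is comparable piece-by-piece to the standard one, so that both norms define the same space with equivalent norms. Write $P_k u=\phi_k*_x u$ for the standard projection (localized to $|\xi|\sim 2^k$) and $\widetilde P_j u=(\mathcal F_\xi^{-1}(\widehat\phi(g(\cdot)/2^j)))*_x u$ for the modified projection. Since $\widehat\phi$ is supported in $\{1/2\le|\tau|\le2\}$ and $g\ge0$ vanishes only at $\xi=0$, the symbol $\widehat\phi(g(\xi)/2^j)$ is supported in $\{2^{j-1}\le g(\xi)\le2^{j+1}\}$, which for large $j$ is the annulus $|\xi|\sim 2^{j/4}$ because the quartic term dominates. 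Thus $\widetilde P_j$ lives at the same frequency scale as $P_k$ with $k\approx j/4$, explaining the matched weights $2^{sj/4}\sim2^{sk}$. The equivalence then follows from a two-sided, almost-orthogonal comparison: each modified piece is a bounded image of the finitely many standard pieces at the same scale, and conversely.

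The analytic core is a uniform Fourier-multiplier bound. I would first establish the symbol estimate
\[
|\partial_\xi^\alpha\widehat\phi(g(\xi)/2^j)|\lesssim 2^{-j|\alpha|/4}\qquad\text{on the support}
\]
for all multi-indices $\alpha$, uniformly in large $j$. By the Fa\`a di Bruno chain rule this reduces to $|\partial^\beta g|\lesssim|\xi|^{4-|\beta|}\sim2^{j(4-|\beta|)/4}$ on $|\xi|\sim2^{j/4}$; for $\mu=-1$ the extra term $|\xi|^2$ is dominated by $|\xi|^4$ once $|\xi|\gtrsim1$, which holds on the support for $j$ large. Rescaling $\xi=2^{j/4}\eta$ turns the symbol into a fixed-scale smooth function supported in $|\eta|\sim1$ with $C^M$-norms bounded uniformly for every $M$; its inverse transform decays rapidly, so the kernel of the multiplier with symbol $\widehat\phi(g/2^j)$ has uniformly bounded $L^1$-norm, and Young's inequality gives $L^p\to L^p$ boundedness with a constant independent of $j$, for all $1\le p\le\infty$. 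The finitely many small-$j$ pieces are fixed compactly supported smooth multipliers, hence trivially bounded. The same reasoning applies to $\widehat\phi(|\xi|/2^k)$, $\widehat\psi(|\xi|)$, $\widehat\psi(g(\cdot))$, and to products of these symbols.

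For $\|u\|_{\widetilde B^s_{p,q}}\lesssim\|u\|_{B^s_{p,q}}$ I insert $1=\widehat\psi(|\xi|)+\sum_{k\ge1}\widehat\phi(|\xi|/2^k)$ into $\widetilde P_j u$. On the support $|\xi|\sim2^{j/4}$ only the $O(1)$ indices with $|k-j/4|\le c$ (plus the $\psi$-piece for the few small $j$) survive, so $\|\widetilde P_j u\|_{L^p}\lesssim\sum_{|k-j/4|\le c}\|P_k u\|_{L^p}$ by the bound of the previous paragraph. Multiplying by $2^{sj/4}$, using $2^{sj/4}\sim2^{sk}$ on this range, and summing in $\ell^q$ (convolution with a fixed finitely supported sequence, hence bounded on $\ell^q$) controls the high-frequency sum by $\|u\|_{B^s_{p,q}}$. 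The modified low-frequency term $\|\widetilde\psi*_x u\|_{L^p}$, whose symbol $\widehat\psi(g)$ is supported in $|\xi|\lesssim1$, is likewise bounded by $\|\psi*_x u\|_{L^p}$ plus finitely many $\|P_k u\|_{L^p}$.

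The reverse inequality is symmetric: I insert the modified partition $1=\widehat\psi(g(\xi))+\sum_{j\ge1}\widehat\phi(g(\xi)/2^j)$ into $P_k u$, note that only $j$ with $|j-4k|\le c$ contribute on $|\xi|\sim2^k$, apply the uniform bound for the multiplier $\widehat\phi(|\xi|/2^k)$, and resum, the standard low-frequency term being treated as above. I expect the \emph{main obstacle} to be exactly the uniform multiplier estimate of the second paragraph---verifying the Fa\`a di Bruno bound and the kernel $L^1$-bound uniformly in $j$ when $\mu=-1$, where $g$ is not homogeneous. This is resolved by restricting to the high-frequency support $|\xi|\gtrsim1$, where $g$ behaves like $|\xi|^4$, and by absorbing the finitely many transition scales near $|\xi|\sim1$ into the low-frequency terms.
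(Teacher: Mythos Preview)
The paper does not supply its own proof of this lemma; it is quoted verbatim as Lemma~3.3 of \cite{xuan} and used as a black box. So there is nothing to compare against beyond the expected standard argument, and your outline is exactly that argument: identify the frequency localization of the modified blocks, prove a uniform $L^1$-kernel bound for the multipliers $\widehat\phi(g(\cdot)/2^j)$ via rescaling $\xi=2^{j/4}\eta$, and run the two-sided almost-orthogonality comparison. This is correct.

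A few small points worth tightening when you write it out. First, be explicit that in both cases $\mu\in\{0,-1\}$ the symbol $g(\xi)=|\xi|^4-\mu|\xi|^2$ is nonnegative and strictly increasing in $|\xi|$, so for every $j\ge1$ the set $\{g(\xi)\in[2^{j-1},2^{j+1}]\}$ is a genuine annulus; this makes the support analysis clean even at the transition scales. Second, after rescaling you get $\widehat\phi\big(|\eta|^4-\mu 2^{-j/2}|\eta|^2\big)$, a family of smooth compactly supported symbols converging in every $C^M$ to $\widehat\phi(|\eta|^4)$ as $j\to\infty$; state this convergence to justify the uniform $L^1$-bound rather than only invoking Fa\`a di Bruno. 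Third, in the resummation step note that the index map $j\mapsto k$ with $|k-j/4|\le c$ is bounded-to-one in both directions, so the $\ell^q$ comparison is a convolution with a fixed $\ell^1$ sequence in $j$ (not in $k$), which is what you need since the modified norm sums in $j$. With these made precise your argument goes through without change.
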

In the rest of this section, we use the natation $\phi_{ j/4}=\mathcal{F}_\xi^{-1}\left(\hat\phi_j(|\xi|^4-\mu|\xi|^2)\right)$. This is an abuse of symbol, but no confusion is likely to arise. Under this notation, we obtain the following equivalence from Lemma \ref{dj}
\begin{equation}\label{1271}
	\|u\|_{{B}_{p, q}^{s}}\approx\left\|\left(\mathcal{F}_{\xi}^{-1}\left(\widehat{\psi}\left(|\xi|^{4}-\mu|\xi|^2\right)\right)\right) *_{x} u\right\|_{L^{p}}
	+\left(\sum_{j=1}^\infty\left(2^{s j / 4}\left\| \phi_{ j/4}*_{x} u\right\|_{L^{p}}\right)^{q}\right)^{1 / q}
\end{equation}
with trivial modification if $q=\infty$.
\begin{lemma}[Lemma 2.1 in \cite{Pe}]\label{31}
	Suppose that $N\ge1$,  $1\leq\gamma<\infty$, $1\leq\rho\leq\infty$, $0<\theta<1,$ and $1\leq q\leq\infty$, then we have
	\begin{equation}\left(L^{\gamma}\left(\R, L^{\rho}(\R^N)\right), H^{1, \gamma}\left(\R, L^{\rho}(\R^N)\right)\right)_{\theta, q}=B_{\gamma, q}^{\theta}\left(\R, L^{\rho}(\R^N)\right).\notag
	\end{equation}
\end{lemma}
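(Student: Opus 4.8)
The plan is to prove the identity through the $K$-functional definition of the real interpolation functor, combined with the difference characterization of the vector-valued Besov norm recorded in Section \ref{s2}. Write $V=L^{\rho}(\R^N)$, which is a Banach space for every $1\le\rho\le\infty$, abbreviate $L^\gamma(\R,V)$ to $L^\gamma(V)$ and $H^{1,\gamma}(\R,V)$ to $H^{1,\gamma}(V)$, and for $\tau\in\R$ denote by $(\Delta_\tau u)(t):=u(t)-u(t-\tau)$ the first difference and by $\omega(t,u):=\sup_{|\tau|\le t}\|\Delta_\tau u\|_{L^\gamma(V)}$ the modulus of continuity. Recall
$$K(t,u):=\inf_{u=u_0+u_1}\left(\|u_0\|_{L^\gamma(V)}+t\,\|u_1\|_{H^{1,\gamma}(V)}\right),$$
so that $\|u\|_{(L^\gamma(V),H^{1,\gamma}(V))_{\theta,q}}=\left(\int_0^\infty\left(t^{-\theta}K(t,u)\right)^q\tfrac{dt}{t}\right)^{1/q}$, with the supremum replacing the integral when $q=\infty$. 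Since every estimate below is carried out on $V$-valued Bochner functions and uses nothing about $V$ beyond completeness, the argument coincides with the scalar case; the whole statement reduces to the two-sided bound
$$K(t,u)\approx\omega(t,u)+t\,\|u\|_{L^\gamma(V)}\quad(0<t\le1),\qquad K(t,u)\approx\|u\|_{L^\gamma(V)}\quad(t\ge1).$$

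For the inequality $\|u\|_{B^\theta_{\gamma,q}}\lesssim\|u\|_{(\cdots)_{\theta,q}}$ I would first bound the first difference by the $K$-functional. Given any admissible decomposition $u=u_0+u_1$, the triangle inequality gives $\|\Delta_\tau u_0\|_{L^\gamma(V)}\le2\|u_0\|_{L^\gamma(V)}$, while for $u_1\in H^{1,\gamma}(V)$ the Bochner fundamental theorem of calculus yields $u_1(t)-u_1(t-\tau)=\int_{t-\tau}^{t}u_1'(\sigma)\,d\sigma$, whence $\|\Delta_\tau u_1\|_{L^\gamma(V)}\le|\tau|\,\|u_1'\|_{L^\gamma(V)}\le|\tau|\,\|u_1\|_{H^{1,\gamma}(V)}$. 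Taking the infimum over decompositions gives $\|\Delta_\tau u\|_{L^\gamma(V)}\lesssim K(|\tau|,u)$, and inserting this into the difference characterization of $\|u\|_{B^\theta_{\gamma,q}(\R,V)}$ from Section \ref{s2} produces the claimed embedding.

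The reverse estimate is the crux and rests on constructing a near-optimal decomposition by mollification in the time variable. Fix a smooth $\varphi$ supported in $\{|\tau|\le1\}$ with $\int\varphi=1$, set $\varphi_t(\tau)=t^{-1}\varphi(\tau/t)$, and take $u_1=\varphi_t*_t u$, $u_0=u-u_1$. Writing $u_0(s)=\int\varphi_t(\tau)\big(u(s)-u(s-\tau)\big)\,d\tau$ and applying Minkowski's integral inequality gives $\|u_0\|_{L^\gamma(V)}\le\int|\varphi_t(\tau)|\,\omega(|\tau|,u)\,d\tau\lesssim\omega(t,u)$, since $\varphi_t$ is supported in $\{|\tau|\le t\}$; Young's inequality gives $\|u_1\|_{L^\gamma(V)}\le\|u\|_{L^\gamma(V)}$. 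The decisive point is the derivative bound: because $\int\varphi_t'=0$, one writes $(\varphi_t'*_tu)(s)=-\int\varphi_t'(\tau)\,(\Delta_\tau u)(s)\,d\tau$, so that $\|u_1'\|_{L^\gamma(V)}=\|\varphi_t'*_tu\|_{L^\gamma(V)}\le\omega(t,u)\int|\varphi_t'(\tau)|\,d\tau\lesssim t^{-1}\omega(t,u)$. Combining these, $K(t,u)\le\|u_0\|_{L^\gamma(V)}+t\|u_1\|_{H^{1,\gamma}(V)}\lesssim\omega(t,u)+t\,\|u\|_{L^\gamma(V)}$, which is the non-trivial half of the displayed equivalence; the bound $K(t,u)\le\|u\|_{L^\gamma(V)}$ for $t\ge1$ is immediate from $u=u+0$.

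Finally I would assemble the pieces. Splitting $\int_0^\infty(t^{-\theta}K(t,u))^q\tfrac{dt}{t}$ at $t=1$ and inserting $K(t,u)\lesssim\omega(t,u)+t\|u\|_{L^\gamma(V)}$ on $(0,1]$ and $K(t,u)\lesssim\|u\|_{L^\gamma(V)}$ on $[1,\infty)$, the two terms carrying $\|u\|_{L^\gamma(V)}$ contribute $\|u\|_{L^\gamma(V)}^q\big(\int_0^1t^{(1-\theta)q}\tfrac{dt}{t}+\int_1^\infty t^{-\theta q}\tfrac{dt}{t}\big)\lesssim\|u\|_{L^\gamma(V)}^q$, both integrals converging precisely because $0<\theta<1$. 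The remaining term $\int_0^1(t^{-\theta}\omega(t,u))^q\tfrac{dt}{t}$ is comparable to the Besov seminorm once one passes from the sup-modulus $\omega$ to the raw first difference: the inequality $\|\Delta_\tau u\|_{L^\gamma(V)}\le\omega(|\tau|,u)$ is free, and the reverse passage is the standard Hardy-type estimate exploiting the subadditivity $\omega(2t,u)\le2\,\omega(t,u)$. This gives $\|u\|_{(\cdots)_{\theta,q}}\lesssim\|u\|_{B^\theta_{\gamma,q}(\R,V)}$ and completes the two-sided comparison, hence the identity. I expect the main obstacle to be the mollification step of the third paragraph — in particular rewriting $\varphi_t'*_tu$ as an average of first differences via the zero-mean property and controlling all Bochner norms through Minkowski's inequality — since the gain $t^{-1}\omega(t,u)$ on the derivative is exactly what forces convergence of the $K$-functional integral; the equivalence between the sup-modulus and the raw difference is routine but must be invoked with care.
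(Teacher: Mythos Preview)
The paper does not prove this lemma: it is quoted as Lemma~2.1 of Pecher~\cite{Pe} and used as a black box in the proof of Proposition~\ref{p1}, so there is no in-paper argument to compare your proposal against.

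Your outline is the standard and correct route---the $K$-functional characterization combined with time-mollification, exactly as in Bergh--L\"ofstr\"om or Triebel for the scalar case, carried over verbatim to the Bochner setting since nothing beyond completeness of $V$ is used. Two small points you leave implicit but should make explicit in a written proof: (i) the bound $\|u\|_{L^\gamma(V)}\lesssim\|u\|_{(\cdots)_{\theta,q}}$, needed for the $L^\gamma$-part of the Besov norm, follows from the general embedding $(X_0,X_1)_{\theta,q}\hookrightarrow X_0+X_1$ together with $L^\gamma(V)+H^{1,\gamma}(V)=L^\gamma(V)$; and (ii) the final passage from the sup-modulus $\omega(t,u)$ back to the raw difference $\|\Delta_\tau u\|_{L^\gamma(V)}$ can be sidestepped entirely by not introducing $\omega$ in the mollification step---keep the integral $\int|\varphi_t(\tau)|\,\|\Delta_\tau u\|_{L^\gamma(V)}\,d\tau$ as is and apply Hardy's inequality directly in the assembly, which is slightly cleaner than invoking the subadditivity $\omega(2t,u)\le2\,\omega(t,u)$.
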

\begin{proof}[\textbf{Proof of Proposition \ref{p1}}]
	The continuity of $Gf$ and $e^{it(\Delta^2+\mu\Delta)}\varphi $ in time follows from density argument. We now prove the inequality (\ref{311}).
	Using the similar argument as that used in the proof of Corollary 2.3.9 in \cite{Ca9}, we obtain the following estimates and omit the details:
	\begin{equation}
		\left\|e^{i t (\Delta^2+\mu\Delta)} \phi\right\|_{L^{q}\left(\R, B_{r, 2}^{\widetilde{s}}\right)} \lesssim\|\phi\|_{H^{\widetilde{s}}},\label{321}
	\end{equation}
	\[
	\left\|\frac{d}{d t}\left(e^{i t (\Delta^2+\mu\Delta)} \phi\right)\right\|_{L^{q}\left(\R, B_{r, 2}^{\widetilde{s}}\right)} \lesssim \|\phi\|_{H^{\widetilde{s}+4}},
	\]
	for any $\widetilde{s}>0$ and $(q,r)\in \Lambda _b$. Define the operator  $\mathcal{G}f:=e^{it(\Delta^2+\mu\Delta)}f$, then we have
	\begin{eqnarray}
		&&\mathcal{G}:L^2\rightarrow  L^{q}\left(\R, L^{r}\right),\notag\\
		&&\mathcal{G}:H^4\rightarrow  H^{1,q}\left(\R, L^{r}\right). \notag
	\end{eqnarray}
	Moreover, from the  interpolation theorem and Lemma \ref{31}, we have
	\begin{equation}
		\mathcal{G}:\left(L^2,H^4\right)_{s/4,2}\rightarrow \left(L^{q}\left(\R, L^{r}\right)\cap H^{1, q}\left(\R, L^{r}\right)\right)_{s/4,2}= B^{s/4}_{q,2}\left(\R,L^r\right).\label{322}
	\end{equation}
	The inequality (\ref{311}) is now an immediate consequence of (\ref{321}), (\ref{322}) and $\left(L^2,H^4\right)_{s/4,2}$ $=H^{s}$.

	In what follows, we prove the estimate (\ref{11214}).
	Taking the Fourier transform, we get
	\begin{equation}\label{9206}
		(G(fg))\hat{\phantom{fg}}(t,\xi)=\int_{-\infty}^{\infty}\fc{e^{it\tau}-e^{it(|\xi|^4-\mu|\xi|^2)}}{2\pi i(\tau-|\xi|^4+\mu|\xi|^2)}\widetilde{fg}(\tau,\xi)d \tau.
	\end{equation}
	From (\ref{9206}) and
	$\phi_j*_te^{ita}=e^{ita}\hat\phi_j(a)$, $\forall$ $a\in\R,$ we obtain, for any $j\ge1$,
	\begin{eqnarray}
		&&\phi_j*_t(G(fg))\hat{\phantom{f}}(t,\xi)\notag\\
		&=&\int_{-\infty}^{\infty}\fc{e^{it\tau}\hat\phi_j(\tau)}{2\pi i(\tau-|\xi|^4+\mu|\xi|^2)}\widetilde {fg}(\tau,\xi) d\tau\notag\\
		&&-\int_{-\infty}^{\infty}\fc{e^{it(|\xi|^4-\mu|\xi|^2)}\hat\phi_j(|\xi|^4-\mu|\xi|^2)\hat\chi_j(\tau)}{{2\pi i(\tau-|\xi|^4+\mu|\xi|^2)}}\widetilde {fg}(\tau,\xi) d\tau\notag\\
		&&-\int_{-\infty}^{\infty}\fc{e^{it(|\xi|^4-\mu|\xi|^2)}\hat\phi_j(|\xi|^4-\mu|\xi|^2)(1-\hat\chi_j(\tau))}{2\pi i(\tau-|\xi|^4+\mu|\xi|^2)}\cdot \hat\chi_j(|\xi|^4-\mu|\xi|^2)\widetilde {fg}(\tau,\xi) d\tau,\notag
	\end{eqnarray}
	where we also used the fact that $\hat\chi_k=1$ on the support of $\hat\phi_k$.
	Moreover, since $$\mathcal{F}_\tau^{-1}\{\fc1{i(\tau-|\xi|^4+\mu|\xi|^2)}\}(t)=\fc12\text{sign}(t)e^{it(|\xi|^4-\mu|\xi|^2)},$$ we obtain
	\begin{eqnarray}\label{2410}
		&&\phi_j*_t(G(fg))\notag\\
		&=&\fc12\int_{-\infty}^{\infty}\text{sign}(t-\tau)e^{i(t-\tau)(\Delta^2+\mu\Delta)}(\phi_j*_t(fg))(\tau)d\tau\notag\\
		&&-\fc12e^{it(\Delta^2+\mu\Delta)}\int_{-\infty}^{\infty}\text{sign}(-\tau)e^{i\tau(\Delta^2+\mu\Delta)}
		(\phi_{ j/4}*_x\chi_j*_t(fg))(\tau) d\tau\notag \\
		&&-e^{it(\Delta^2+\mu\Delta)}\{K_j*_{t,x}\chi_{ j/4}*_x(fg)\}|_{t=0},
	\end{eqnarray}
	where $K_j$ is the function  defined in Lemma \ref{L1}.
	
	We first prove that
	\begin{eqnarray}\label{11212}
		&&\left(\sum_{j\ge1}\left(2^{js/4}\|K_j*_{t,x}\chi_{j/4}*_x(fg)\|_{L^\infty L^2\cap L^q L^r}\right)^2\right)^{1/2}\lesssim  \left\|fg\right\|_{L^{\overline{q}}L^{\overline{r}}}.
	\end{eqnarray}
	Let $q_0,r_0,q_1,r_1$ be given by the equation $1=\frac{1}{q_0}+\frac{1}{\overline{q}}$, $1+\frac{1}{2}=\frac{1}{r_0}+\frac{1}{\overline{r}}$, $1+\frac{1}{q}=\frac{1}{q_1}+\frac{1}{\overline{q}}$ and $1+\frac{1}{r}=\frac{1}{r_1}+\frac{1}{\overline{r}}$. Then it is easy to check that $1\le q_0,r_0,q_1,r_1\le \infty $ and $\frac{4}{q_0}-N\left(1-\frac{1}{r_0}\right)=\frac{4}{q_1}-N\left(1-\frac{1}{r_1}\right)=s$. Thus from Young's inequality and Lemma \ref{L1}, we have
	\begin{eqnarray}\label{2241}
		\left\|K_j*_{t,x}\chi_{j/4}*_x(fg)\right\|_{L^\infty L^2\cap L^qL^r}&\lesssim& \left\|K_j\right\|_{L^{q_0}L^{r_0}\cap L^{q_1}L^{r_1}}\left\|\chi_{j/4}*_x(fg)\right\|_{L^{\overline{q}}L^{\overline{r}}}\notag\\
		&\lesssim& 2^{-js/4}\left\|\chi_{j/4}*_x(fg)\right\|_{L^{\overline{q}}L^{\overline{r}}}.
	\end{eqnarray}
	Since $1\le \overline{q},\overline{r}\le2$, it follows from (\ref{2241}), Minkowski's inequality and Sobolev's embedding $L^{\overline{r}}(\R^N)\hookrightarrow B^0_{\overline{r},2}(\R^N)$ that
	\begin{eqnarray}
		&&\left(\sum_{j\ge1}\left(2^{js/4}\|K_j*_{t,x}\chi_{j/4}*_x(fg)\|_{L^q L^r}\right)^2\right)^{1/2}\notag\\
		&\lesssim &  \left(\sum_{j\ge1}\left\|\chi_{j/4}*_x(fg)\right\|_{L^{\overline{q}}L^{\overline{r}}}^2\right)^{1/2}\lesssim \left\|fg\right\|_{L^{\overline{q}}B^0_{\overline{r},2}}\lesssim \left\|fg\right\|_{L^{\overline{q}}L^{\overline{r}}}.
	\end{eqnarray}
	
	Next, we prove that
	\begin{eqnarray}\label{11210}
		&&\left(\sum_{j\ge1}\left(2^{js/4}\|\int_{-\infty}^{\infty}\text{sign}(t-\tau)e^{i(t-\tau)(\Delta^2+\mu\Delta)}(\phi_j*_t(fg))d\tau\|_{L^q L^r}\right)^2\right)^{1/2}\notag\\
		&\lesssim &\left(\int_{-\infty}^\infty\left(|\tau|^{-s/4}\|g(t-\tau)(f(t-\tau)-f(t))\|_{L^{\gamma_0'}L^{\rho_0'}}\right)^2\fc{d\tau}{|\tau|}\right)^{1/2}\notag\\
		&&+\left(\int_{-\infty}^\infty\left(|\tau|^{-s/4}\|(g(t-\tau)-g(t))f(t)\|_{L^{\gamma_1'}L^{\rho_1'}}\right)^2\fc{d\tau}{|\tau|}\right)^{1/2}.
	\end{eqnarray}
	Since $\int_{-\infty }^\infty  \phi(t) dt=\hat\phi(0)=0$, we have
	\begin{eqnarray}\label{248}
		\phi_j*_t(fg)(t)&=&2^j\int_{-\infty }^\infty \phi(2^j\tau)g(t-\tau)(f(t-\tau)-f(t))d\tau \notag\\
		&&+2^j\int_{-\infty }^\infty \phi(2^j\tau)(g(t-\tau)-g(t))f(t)d\tau.
	\end{eqnarray}
	From (\ref{248}) and Strichartz's estimate (\ref{SZ1}), we get
	\begin{eqnarray}\label{249}
		&&\sum_{j\ge1}\left(2^{js/4}\|\int_{-\infty}^{\infty}\text{sign}(t-\tau)e^{i(t-\tau)(\Delta^2+\mu\Delta)}(\phi_j*_t(fg))d\tau\|_{L^q L^r}\right)^2\notag\\
		&\lesssim&\sum_{j\ge1}2^{j(s/2+2)}\left(\int_{-\infty }^\infty |\phi(2^j\tau)|\|g(t-\tau)(f(t-\tau)-f(t))\|_{L^{\gamma_0'}L^{\rho_0'}}d\tau\right)^2\notag\\
		&&+\sum_{j\ge1}2^{j(s/2+2)}\left(\int_{-\infty }^\infty |\phi(2^j\tau)|\|(g(t-\tau)-g(t))f(t)\|_{L^{\gamma_1'}L^{\rho_1'}}d\tau\right)^2\notag\\
		&=:&\uppercase\expandafter{\romannumeral1}+\uppercase\expandafter{\romannumeral2}.
	\end{eqnarray}
	For the estimate of $\uppercase\expandafter{\romannumeral1}$, we have,  by using Cauchy-Schwartz inequality,
	\begin{eqnarray}
		&&\left(\int_{-\infty }^\infty |\phi(2^j\tau)|\|g(t-\tau)(f(t-\tau)-f(t))\|_{L^{\gamma_0'}L^{\rho_0'}}d\tau\right)^2\notag\\		&\lesssim&\int_{2^j|\tau|\le1}\left(\phi(2^j\tau)\right)^2d\tau\int_{2^j|\tau|\le1}\|g(t-\tau)(f(t-\tau)-f(t))\|_{L^{\gamma_0'}L^{\rho_0'}}^2d\tau\notag\\		&&+\int_{2^j|\tau|\ge1}|\tau|^3\left(\phi(2^j\tau)\right)^2d\tau\int_{2^j|\tau|\ge1}\fc{\|g(t-\tau)(f(t-\tau)-f(t))\|^2_{L^{\gamma_0'}L^{\rho_0'}}}{|\tau|^3}d\tau\notag\\
		&\lesssim&2^{-j}\int_{2^j|\tau|\le1}\|g(t-\tau)(f(t-\tau)-f(t))\|_{L^{\gamma_0'}L^{\rho_0'}}^2d\tau\notag\\
		&&+2^{-4j}\int_{2^j|\tau|\ge1}\fc{\|g(t-\tau)(f(t-\tau)-f(t))\|^2_{L^{\gamma_0'}L^{\rho_0'}}}{|\tau|^3}d\tau.\notag
	\end{eqnarray}
	This inequality together with Fubini's Theorem yields
	\begin{eqnarray}\label{1127}		\uppercase\expandafter{\romannumeral1}&\lesssim&\sum_{j\ge1}2^{j(s/2+1)}\int_{2^j|\tau|\le1}\|g(t-\tau)(f(t-\tau)-f(t))\|_{L^{\gamma_0'}L^{\rho_0'}}^2d\tau\notag\\
		&&+\sum_{j\ge1}2^{j(s/2-2)}\int_{2^j|\tau|\ge1}\fc{\|g(t-\tau)(f(t-\tau)-f(t))\|^2_{L^{\gamma_0'}L^{\rho_0'}}}{|\tau|^3}d\tau\notag\\
		&\lesssim&\int_{-\infty}^\infty\sum_{2^j|\tau|\le1}2^{j(s/2+1)}\|g(t-\tau)(f(t-\tau)-f(t))\|_{L^{\gamma_0'}L^{\rho_0'}}^2d\tau\notag\\
		&&+\int_{-\infty}^\infty\sum_{2^j|\tau|\ge1}2^{j(s/2-2)}\fc{\|g(t-\tau)(f(t-\tau)-f(t))\|^2_{L^{\gamma_0'}L^{\rho_0'}}}{|\tau|^3}d\tau\notag\\
		&\lesssim&\int_{-\infty}^\infty\left(|\tau|^{-s/4}\|g(t-\tau)(f(t-\tau)-f(t))\|_{L^{\gamma_0'}L^{\rho_0'}}\right)^2\fc{d\tau}{|\tau|}.
	\end{eqnarray}
	Similarly, we have
	\begin{equation}\label{1128}		\uppercase\expandafter{\romannumeral2}\lesssim\int_{-\infty}^\infty\left(|\tau|^{-s/4}\|(g(t-\tau)-g(t))f(t)\|_{L^{\gamma_1'}L^{\rho_1'}}\right)^2\fc{d\tau}{|\tau|}.
	\end{equation}
	The inequality (\ref{11210}) is now an immediate consequence of  (\ref{249}),  (\ref{1127}) and (\ref{1128}).
	
	Using the same method as that used to derive (\ref{11210}), we obtain
	\begin{align}\label{11211}		&\left(\sum_{j\ge1}(2^{js/4}\|e^{it(\Delta^2+\mu\Delta)}\int_{-\infty}^{\infty}\text{sign}(-\tau)e^{i\tau(\Delta^2+\mu\Delta)}(\phi_{j/4}*_x\chi_j*_t(fg))(\tau) d\tau\|_{L^q L^r})^2\right)^{1/2}\notag\\
		&\lesssim \left(\int_{-\infty}^\infty\left(|\tau|^{-s/4}\|g(t-\tau)(f(t-\tau)-f(t))\|_{L^{\gamma_0'}L^{\rho_0'}}\right)^2\fc{d\tau}{|\tau|}\right)^{1/2}\notag\\
		&\ \qquad+\left(\int_{-\infty}^\infty\left(|\tau|^{-s/4}\|(g(t-\tau)-g(t))f(t)\|_{L^{\gamma_1'}L^{\rho_1'}}\right)^2\fc{d\tau}{|\tau|}\right)^{1/2}.
	\end{align}

	Since $\left\|\psi*_tG(fg)\right\|_{L^qL^r}\lesssim \left\|fg\right\| _{L^{\gamma'}L^{\rho'}}$ by Young's inequality and Strichartz's estimate (\ref{SZ}), it follows from (\ref{2410}), (\ref{11212}), (\ref{11210}), (\ref{11211}) and Strichartz's estimate (\ref{sz}) that
	\begin{eqnarray}\label{1141}
		\|G(fg)\|_{B^{s/4}_{q,2}L^r}
		&\lesssim &\left\{\int_{-\infty}^\infty\left(|\tau|^{-s/4}\|g(t-\tau)(f(t-\tau)-f(t))\|_{L^{\gamma_0'}L^{\rho_0'}}\right)^2\fc{d\tau}{|\tau|}\right\}^{\fc12}\notag\\
		&&+\left\{\int_{-\infty}^\infty\left(|\tau|^{-s/4}\|(g(t-\tau)-g(t))f(t)\|_{L^{\gamma_1'}L^{\rho_1'}}\right)^2\fc{d\tau}{|\tau|}\right\}^{\fc12}\notag\\
		&&+\left\|fg\right\| _{L^{\gamma'}L^{\rho'}\cap L^{\overline{q}}L^{\overline{r}}}.
	\end{eqnarray}
	
	Finally, we estimate $\left\|G(fg)\right\|_{L^qB^s_{r,2}}$.
	Similar to (\ref{2410}), we can write
	\begin{eqnarray}\label{11213}
		\phi_{ j/4}*_x(G(fg))&=&\fc12\int_{-\infty}^{\infty}\text{sign}(t-\tau)e^{i(t-\tau)(\Delta^2+\mu\Delta)}(\phi_{ j/4}*_x\chi_j*_t(fg))(\tau)d\tau\notag\\
		&&+K_j*_{t,x}\chi_{ j/4}*_x(fg)\notag\\
		&&-\fc12e^{it(\Delta^2+\mu\Delta)}\int_{-\infty}^{\infty}\text{sign}(-\tau)e^{i\tau(\Delta^2+\mu\Delta)}(\phi_{ j/4}*_x\chi_j*_t(fg))(\tau) d\tau \notag\\
		&&-e^{it(\Delta^2+\mu\Delta)}\{K_j*_{t,x}\chi_{ j/4}*_x(fg)\}|_{t=0}.\notag
	\end{eqnarray}
	Since $\int_{- \infty}^{\infty }\chi(\tau)d\tau=\hat\chi(0)=0 $, we can apply the equivalent norm in   (\ref{1271}) and the same argument as that used to derive (\ref{1141})  to obtain
	\begin{eqnarray}
		\|G(fg)\|_{L^qB^{s}_{r,2}}
		&\lesssim &\left\{\int_{-\infty}^\infty\left(|\tau|^{-s/4}\|g(t-\tau)(f(t-\tau)-f(t))\|_{L^{\gamma_0'}L^{\rho_0'}}\right)^2\fc{d\tau}{|\tau|}\right\}^{\fc12}\notag\\
		&&+\left\{\int_{-\infty}^\infty\left(|\tau|^{-s/4}\|(g(t-\tau)-g(t))f(t)\|_{L^{\gamma_1'}L^{\rho_1'}}\right)^2\fc{d\tau}{|\tau|}\right\}^{\fc12}\notag\\
		&&+\left\|fg\right\| _{L^{\gamma'}L^{\rho'}\cap L^{\overline{q}}L^{\overline{r}}}.
	\end{eqnarray}
	This inequality together with (\ref{1141}) finishes the proof of  Proposition \ref{p1}.
\end{proof}
\section{Nonlinear estimates}\label{s4}
In this section, we prove the  following lemmas, which provides an estimate for the nonlinearity in the Strichartz spaces. Before stating the Lemmas, we define
$$
1_{\alpha<1}=\begin{cases}1,\quad\text{if }0<\alpha<1;\\
	0, \quad\text{if }\alpha\ge1,\end{cases}\qquad 4^*=\begin{cases}
	\frac{2N}{N-4},\quad \text{if }N\ge5,\\
	\infty,\quad \text{if }1\le N\le 4.
\end{cases}
$$
We also define the norm,
\begin{eqnarray}
	\left\|u\right\|_{\mathcal{X}^s}&:=&\sup_{(q,r)\in \Lambda _b}\left\|u\right\|_{L^qB^s_{r,2}\cap B^{s/4}_{q,2}L^r},\notag\\
	\left\|u\right\|_{L_{\text{uloc},T}^qL^p}&:=&\sup_{b-a=2T}\left(\int_{a}^{b}\left\|u\right\| _{L^p(\R^N)}^q\mathrm{d} t\ \right)^{1/q},\notag
\end{eqnarray}
where $s>0$, $1\le p,q<\infty $ and $T>0$.
In the rest of this paper, we fix the cut off function $\chi\in  C_0^\infty((-2,2))$ with $\chi|_{t\in[-1,1]}=1$, $\chi_T(t)=\chi(\fc tT)$. We first consider the case $s<\frac{N}{2}$.
\begin{lemma}\label{l1}
	Let $N\ge1$, $\beta>\max \left\{2,\frac{N}{4}\right\} $, $0<s\le2$, $s<\frac{N}{2}$,  $0<\alpha$,  $\left(N-2s\right)\alpha <8-\frac{2N}{\beta}$, $f\in\mathcal{C}(\alpha )$, and $K(x)\in L^1(\R^N)\cap L^\beta(\R^N)$.  There exist $\left(\gamma,\rho\right)\in\Lambda_b$ and $\sigma>0$ such that
	\begin{eqnarray}\label{2231}
		&&\left(\int_{-\infty }^{\infty }\left(\left|\tau\right|^{-s/4 }\left\|\left(\chi_T(t-\tau)-\chi_T(\tau)\right)K(x)\left(f(u)-f(v)\right)\right\|_{L^{\gamma'}L^{\rho'}}\right)^2 \frac{\mathrm{d}\tau}{\left|\tau\right|} \right)^{1/2}\notag\\
		&\lesssim &  T^\sigma \left(\left\|u\right\|_{\mathcal{X}^s}^\alpha +\left\|v\right\|_{\mathcal{X}^s}^\alpha \right)\left\|u-v\right\|_{\mathcal{X}^s}.
	\end{eqnarray}
\end{lemma}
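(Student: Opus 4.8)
The plan is to recognize (\ref{2231}) as a direct estimate of the $(\gamma_1,\rho_1)$-term on the right of the bilinear inequality (\ref{11214}) in the situation where, in the notation of Proposition~\ref{p1}, one takes $g=\chi_T$ and the remaining factor equal to $K(x)(f(u)-f(v))$, so that the only time-translation falls on the cut-off; this is what makes the present term ``cheap'' compared with the one in which $f(u(t-\tau))-f(u(t))$ appears. The first step is to absorb the cut-off difference and the $\tau$-integral. Writing $\Phi(t):=\|K(x)(f(u(t,\cdot))-f(v(t,\cdot)))\|_{L^{\rho'}_x}$, the pointwise bound $|\chi_T(t-\tau)-\chi_T(t)|\lesssim\min\{1,|\tau|/T\}$, together with the fact that this difference vanishes unless $t\in(-2T,2T)\cup(\tau-2T,\tau+2T)$, gives $\|(\chi_T(\cdot-\tau)-\chi_T(\cdot))\Phi\|_{L^{\gamma'}_t}\lesssim\min\{1,|\tau|/T\}\,\|\Phi\|_{L^{\gamma'}_{\text{uloc},T}}$. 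Inserting this into the left side of (\ref{2231}) and evaluating the elementary integral
\[
\int_{\R}\Big(|\tau|^{-s/4}\min\{1,|\tau|/T\}\Big)^2\frac{d\tau}{|\tau|}\approx T^{-s/2}
\]
(which converges at $0$ because $s\le2<4$ and at $\infty$ because $s>0$) reduces everything to proving $\|K(f(u)-f(v))\|_{L^{\gamma'}_{\text{uloc},T}L^{\rho'}}\lesssim T^{s/4+\sigma}\big(\|u\|_{\mathcal{X}^s}^\alpha+\|v\|_{\mathcal{X}^s}^\alpha\big)\|u-v\|_{\mathcal{X}^s}$ for some $\sigma>0$.

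For the nonlinear step I would fix a time window $I$ of length $2T$ and use (\ref{fu}) to dominate $|f(u)-f(v)|$ by $(|u|^\alpha+|v|^\alpha)|u-v|$. Hölder in $t$ over $I$ (producing a factor $(2T)^{1/\gamma'-1/a}$ for a suitable $a\ge\gamma'$ with $1/a=\alpha/q_1+1/q_2$) followed by Hölder in $x$ with $1/\rho'=1/d+\alpha/r_1+1/r_2$ yields
\[
\|K(f(u)-f(v))\|_{L^{\gamma'}(I)L^{\rho'}}\lesssim (2T)^{1/\gamma'-1/a}\,\|K\|_{L^d_x}\big(\|u\|_{L^{q_1}(I)L^{r_1}}^\alpha+\|v\|_{L^{q_1}(I)L^{r_1}}^\alpha\big)\|u-v\|_{L^{q_2}(I)L^{r_2}}.
\]
Since $K\in L^1(\R^N)\cap L^\beta(\R^N)\subset L^d(\R^N)$ for every $d\in[1,\beta]$, the factor $\|K\|_{L^d_x}$ is a finite constant; and choosing $(q_1,\tilde r_1),(q_2,\tilde r_2)\in\Lambda_b$ with $\tilde r_1\le r_1$, $\tilde r_2\le r_2$ and with the Sobolev embeddings $B^s_{\tilde r_1,2}\hookrightarrow L^{r_1}$, $B^s_{\tilde r_2,2}\hookrightarrow L^{r_2}$ valid, the norms $\|u\|_{L^{q_1}(I)L^{r_1}}$, $\|v\|_{L^{q_1}(I)L^{r_1}}$, $\|u-v\|_{L^{q_2}(I)L^{r_2}}$ are each controlled by the corresponding $\mathcal{X}^s$-norm over $\R$. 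Taking the supremum over all such windows $I$ gives the desired bound with $\sigma=1/\gamma'-1/a-s/4$.

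What remains — and what I expect to be the real work — is to verify that all the exponents above can be chosen simultaneously: $(\gamma,\rho)\in\Lambda_b$ (we are free to pick it), $a\ge\gamma'$ with $1/a=\alpha/q_1+1/q_2$ and $q_i\ge2$, $d\in[1,\beta]$, and $r_1,r_2$, $(q_i,\tilde r_i)\in\Lambda_b$ compatible with the Hölder relations and the Besov–Sobolev embeddings, \emph{with} the surplus $\sigma=1/\gamma'-\alpha/q_1-1/q_2-s/4$ strictly positive. A scaling count identifies the borderline case $\sigma=0$ with $(N-2s)\alpha=8-\tfrac{2N}{\beta}$ (i.e.\ $K$ of ``critical integrability'' $b=N/\beta$), so the system is solvable with $\sigma>0$ exactly in the subcritical range assumed, while $\beta>N/4$ is precisely what makes $8-\tfrac{2N}{\beta}>0$, $\beta>2$ leaves room for $d$ in the spatial Hölder split, and $s<N/2$ guarantees a nonempty range of admissible Sobolev exponents. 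In carrying this out one has to distinguish cases according to whether each embedding $B^s_{\tilde r_i,2}\hookrightarrow L^{r_i}$ lands in a Lebesgue space or in $L^\infty$ (the sign of $s\tilde r_i-N$), and the $L^1$-part of $K$ — not merely its $L^\beta$-part — is needed in the regime where the $u$- and $(u-v)$-factors are forced into a space of low spatial integrability, so that $d<\beta$.
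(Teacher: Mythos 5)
Your reduction is sound and, despite the repackaging, runs parallel to the paper's own argument: the paper also splits off the cut-off difference by H\"older in time (measuring $\chi_T(\cdot-\tau)-\chi_T$ in $L^l_t$ with $\frac{1}{l}=1-\frac{1}{\gamma}-\frac{\alpha+1}{q}$ and then recognizing the $\tau$-integral as the Besov seminorm $\|\chi_T\|_{B^{s/4}_{l,2}}\lesssim T^{1/l-s/4}$), uses $K\in L^1\cap L^\beta\subset L^p$ for an intermediate $p$, and controls the remaining factors via $B^s_{r,2}\hookrightarrow L^{Nr/(N-sr)}$. Your $\min\{1,|\tau|/T\}$ bound plus the support observation and the uloc-norm bookkeeping reproduces exactly the same power $T^{1/\gamma'-1/a-s/4}$, so that part is fine (and marginally more flexible, since you allow the $u$-factors and the $u-v$ factor to carry different exponents).

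The genuine gap is the step you defer as ``the real work'': you never exhibit admissible pairs satisfying your constraint system, and a scaling count does not suffice to do so. The exponents are not free parameters on an open set: $(\gamma,\rho)$ and $(q_i,\tilde r_i)$ are pinned to the line $\frac{4}{q}+\frac{N}{r}=\frac{N}{2}$ with the endpoint restrictions $2\le q,r\le\infty$, $(q,r,N)\neq(2,\infty,4)$, and must additionally satisfy $\tilde r_i<\frac{N}{s}$ (for the Sobolev embedding), $1<d<\beta$ strictly (so that $K\in L^1\cap L^\beta\subset L^d$ is actually usable), and the strict surplus $\sigma>0$. Whether this system is nonempty depends delicately on the size of $(N-2s)\alpha$ relative to $2s-\frac{2N}{\beta}$ and $4+2s-\frac{2N}{\beta}$ and on whether $N\ge5$ or $N\le4$ (where $\rho=\frac{2N}{N-4}$ is unavailable); the paper's proof is essentially nothing but this verification, carried out in four separate cases with explicit $\ep$-perturbed choices. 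Identifying the borderline $\sigma=0$ with the critical scaling shows the problem is \emph{not over-determined}, but it does not show that a solution exists within the admissible region --- for instance, for large $(N-2s)\alpha$ the pair $(q,r)$ is forced toward the endpoint and one must check that $r<\frac{N}{s}$ and $r\le\frac{2N}{N-4}$ (or the low-dimensional substitute) can still hold simultaneously. Until those choices are written down and checked, the lemma is not proved.
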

\begin{proof}
	Let $b=\frac{N}{\beta}$, then we have $0<b<\min \left\{\frac{N}{2},4\right\} $ and $(N-2s)\alpha <8-2b$.	We first claim that if $\left(\gamma,\rho\right),\left(q,r\right)\in\Lambda_b$ are two admissible pairs that satisfy
	\begin{numcases}
		{\ }	1-\frac{1}{\gamma}-\frac{\alpha+1 }{q}-\frac{s}{4}>0,\label{a1}\\
		r<\frac{N}{s},\label{a2}\\
		1-\frac{1}{\rho}-\left(\alpha +1\right)\left(\frac{1}{r}-\frac{s}{N}\right)>\frac{b}{N},\label{a3}
	\end{numcases}
	then the inequality (\ref{2231}) holds with $\sigma=1-\frac{1}{\gamma}-\frac{\alpha +1}{q}-\frac{s}{4}$.
	In fact, let $p,l$ be given by $1-\frac{1}{\rho}=\frac{1}{p}+\left(\alpha +1\right)\left(\frac{1}{r}-\frac{s}{N}\right)$ and $1-\frac{1}{\gamma}=\frac{\alpha+1 }{q}+\frac{1}{l}$, respectively. Then it is easy to check that $1<l<\infty $ and $1<p<\frac{N}{b}=\beta$; so that $K(x)\in L^1(\R^N)\cap L^\beta(\R^N)\subset L^p(\R^N)$. From (\ref{fu}), H\"older's inequality and Sobolev's embedding $B^{s}_{r,2}\left(\R^N\right)\hookrightarrow L^{\frac{Nr}{N-sr}}\left(\R^N\right)$, we have
	\begin{eqnarray}\label{292}
		&&\left\|\left(\chi_{T}(t-\tau)-\chi_T(t)\right)K\left(f(u)-f(v)\right)\right\|_{L^{\gamma'}L^{\rho'}}\notag\\
		&\lesssim &  \left\|\chi_{T}(t-\tau)-\chi_T(t)\right\|_{L^l}\left\|K\right\|_{L^p}\left(\left\|u\right\|_{L^qL^{\frac{Nr}{N-sr}}}^\alpha +\left\|v\right\|_{L^qL^{\frac{Nr}{N-sr}}}^\alpha \right)\left\|u-v\right\|_{L^qL^{\frac{Nr}{N-sr}}}\notag\\
		&\lesssim &  \left\|\chi_{T}(t-\tau)-\chi_T(t)\right\|_{L^l}\left(\left\|u\right\|_{L^qB^{s}_{r,2}}^\alpha +\left\|v\right\|_{L^qB^{s}_{r,2}}^\alpha \right)\left\|u-v\right\|_{L^qB^{s}_{r,2}}.
	\end{eqnarray}
	Moreover, from (\ref{292}), we have
	\begin{eqnarray}\label{293}
		&&\left(\int_{-\infty }^{\infty }\left(\left|\tau\right|^{-s/4 }\left\|\left(\chi_T(t-\tau)-\chi_T(\tau)\right)K(x)\left(f(u)-f(v)\right)\right\|_{L^{\gamma'}L^{\rho'}}\right)^2 \frac{\mathrm{d}\tau}{\left|\tau\right|} \right)^{1/2}\notag\\
		&\lesssim & \left\|\chi_T\right\|_{B^{s/4}_{l,2}}\left(\left\|u\right\|_{L^qB^{s}_{r,2}}^\alpha +\left\|v\right\|_{L^qB^{s}_{r,2}}^\alpha \right)\left\|u-v\right\|_{L^qB^{s}_{r,2}}.
	\end{eqnarray}
	The inequality (\ref{2231}) is now an immediate consequence of (\ref{293}) and 	 $\left\|\chi_T\right\|_{B^{s/4}_{l,2}}\lesssim C_{\chi}T^{\frac{1}{l}-\frac{s}{4}}=C_{\chi}T^{1-\frac{1}{\gamma}-\frac{\alpha+1 }{q}-\frac{s}{4}}$.
	
	To prove Lemma \ref{l1}, it suffices to  provide two  biharmonic admissible pairs $\left(\gamma,\rho\right),\left(q,r\right)\in\Lambda_b$ that satisfy (\ref{a1})--(\ref{a3}). We consider four cases.

	\textbf{Cases 1:} $\left(N-2s\right)\alpha <2s-2b$. Let $\gamma=q=\infty$,  $\rho=r=2$. Then it is easy to check that $\left(\gamma,\rho\right)$, $\left(q,r\right)\in\Lambda_b$ and (\ref{a1}), (\ref{a2}) hold. For (\ref{a3}), we have
	\begin{equation}
		1-\frac{1}{\rho}-\left(\alpha +1\right)\left(\frac{1}{r}-\frac{s}{N}\right)-\frac{b}{N}=\frac{2s-2b-\left(N-2s\right)\alpha }{2N}>0.\notag
	\end{equation}
	
	\textbf{Cases 2:} $2s-2b\le\left(N-2s\right)\alpha<4+2s-2b$. Let $q=\infty$, $r=2$ and
	\begin{equation}
		\gamma=\frac{8}{\left(N-2s\right)\alpha -2s+2b+\ep},\qquad  \rho=\frac{2N}{N+2s-2b-\left(N-2s\right)\alpha -\ep},\notag
	\end{equation}
	where $\ep>0$ sufficiently small such that
	\begin{equation}
		\ep<\min \left\{4+2s-2b-\left(N-2s\right)\alpha,\  8-2b-\left(N-2s\right)\alpha, \ 2  \right\}. \notag
	\end{equation}
	Then it is easy to check that $\left(\gamma,\rho\right)$, $\left(q,r\right)\in\Lambda_b$ and (\ref{a2}) hold. Moreover, by direct computation, we have
	\begin{equation}
		\begin{cases}
			1-\frac{1}{\gamma}-\frac{\alpha+1 }{q}-\frac{s}{4}=\frac{8-2b-\left(N-2s\right)\alpha -\ep}{8}>0,\\
			1-\frac{1}{\rho}-\left(\alpha +1\right)\left(\frac{1}{r}-\frac{s}{N}\right)=\frac{2b+\ep}{2N}>\frac{b}{N}.
		\end{cases}\notag
	\end{equation}
	Hence we see that (\ref{a1}) and (\ref{a3}) hold.

	\textbf{Cases 3:} $\left(N-2s\right)\alpha \ge 4+2s-2b$ and $N\ge5$. Let $\gamma=2$, $\rho=\frac{2N}{N-4}$, and
	\begin{equation}
		q=\frac{8\left(\alpha +1\right)}{\left(N-2s\right)\alpha -\left(4+2s-2b\right)+2\ep},\qquad  r=\frac{2N\left(\alpha +1\right)}{N+2s\left(\alpha +1\right)+4-2b-2\ep},\notag
	\end{equation}
	where $\ep>0$ sufficiently small such that
	\begin{equation}
		\ep<\min \left\{\frac{8-2b-\left(N-2s\right)\alpha}{2},\ s\right\}. \notag
	\end{equation}
	Then it is easy to check that  and
	Then  by direct calculation, we have $\left(\gamma,\rho\right)$, $\left(q,r\right)\in\Lambda_b$ and
	\begin{equation}
		\begin{cases}
			1-\frac{1}{\gamma}-\frac{\alpha +1}{q}-\frac{s}{4}=\frac{8-2b-\left(N-2s\right)\alpha -2\ep}{8}>0,\\
			\frac{N}{s}-r=\frac{N\left(N+4-2b-2\ep\right)}{s\left(N+2s\left(\alpha +1\right)+4-2b-2\ep\right)}>0,\notag\\
			1-\frac{1}{\rho}-\left(\alpha +1\right)\left(\frac{1}{r}-\frac{s}{N}\right)=\frac{b+\ep}{N}>\frac{b}{N}.
		\end{cases}\notag
	\end{equation}
	Hence we see that (\ref{a1})--(\ref{a3}) hold.
	
	\textbf{Cases 4:} $\left(N-2s\right)\alpha \ge 4+2s-2b$ and $N\le4$. Let $\gamma=\frac{8}{N(1-\ep)}$, $\rho=\frac{2}{\ep}$, and
	\begin{equation}
		q=\frac{8(\alpha +1)}{(N-2s)\alpha-(1-2\ep)N-2s+2b},\qquad r=\frac{N(\alpha +1)}{(1-\ep)N-b+s(\alpha +1)},\notag
	\end{equation}
	where $\ep>0$ sufficiently small such that
	\begin{equation}
		\ep< \min \left\{\frac{N-b}{N},\ \frac{8-2b-(N-2s)\alpha }{N}\right\}.    \notag
	\end{equation}
	Then by direct calculation, we have  $\left(\gamma,\rho\right)$, $ \left(q,r\right)\in\Lambda_b$  and
	\begin{equation}
		\begin{cases}
			1-\frac{1}{\gamma}-\frac{\alpha +1}{q}-\frac{s}{4}=\frac{8-2b-\ep N-\left(N-2s\right)\alpha}{8}>0,\\
			\frac{N}{s}-r=\frac{N\left((1-\ep)N-b\right)}{s\left((1-\ep)N-b+s(\alpha +1)\right)}>0,\\
			1-\frac{1}{\rho}-\left(\alpha +1\right)\left(\frac{1}{r}-\frac{s}{N}\right)=\frac{2b+N\ep}{2N}>\frac{b}{N}.
		\end{cases}\notag
	\end{equation}
	Hence we see that (\ref{a1})--(\ref{a3}) hold.
\end{proof}
\begin{lemma}\label{l2}
	Let $N\ge1$, $\beta>\max \left\{2,\frac{N}{4}\right\} $, $0<s\le2$, $s<\frac{N}{2}$,  $0<\alpha$,  $\left(N-2s\right)\alpha <8-\frac{2N}{\beta}$, $f\in\mathcal{C}(\alpha )$, and $K(x)\in L^\infty (\R^N)$.  There exist $\left(\gamma,\rho\right)\in\Lambda_b$ and $\sigma>0$ such that
	\begin{eqnarray}\label{2232}
		&&\left(\int_{-\infty }^{\infty }\left(\left|\tau\right|^{-s/4 }\left\|\left(\chi_T(t-\tau)-\chi_T(\tau)\right)K(x)\left(f(u)-f(v)\right)\right\|_{L^{\gamma'}L^{\rho'}}\right)^2 \frac{\mathrm{d}\tau}{\left|\tau\right|} \right)^{1/2}\notag\\
		&\lesssim &  T^\sigma \left(\left\|u\right\|_{\mathcal{X}^s}^\alpha +\left\|v\right\|_{\mathcal{X}^s}^\alpha \right)\left\|u-v\right\|_{\mathcal{X}^s}.
	\end{eqnarray}
\end{lemma}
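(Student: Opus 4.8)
The plan is to follow the proof of Lemma~\ref{l1} essentially verbatim; the only structural change is that the weight $K$, now bounded, will be peeled off in $L^\infty(\R^N)$ rather than in an auxiliary space $L^p(\R^N)$, so that no Lebesgue norm of $K$ survives and the singularity exponent that played the role of ``$b$'' in Lemma~\ref{l1} drops to $0$. Concretely, I would first isolate the reduction: \emph{if $(\gamma,\rho),(q,r)\in\Lambda_b$ are biharmonic admissible pairs with}
\begin{equation*}
	\text{(i)}\ \ 1-\frac1\gamma-\frac{\alpha+1}{q}-\frac s4>0,\qquad
	\text{(ii)}\ \ r<\frac Ns,\qquad
	\text{(iii)}\ \ (\alpha+1)\Bigl(\frac1r-\frac sN\Bigr)\le 1-\frac1\rho\le(\alpha+1)\frac1r,
\end{equation*}
\emph{then \eqref{2232} holds with $\sigma$ equal to the left-hand side of} (i). Granting this, the lemma follows once I exhibit such a pair for every $(\alpha,s,N)$ in the stated range.

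For the reduction I would argue exactly as in the derivation of \eqref{2231}: set $\tfrac1l:=1-\tfrac1\gamma-\tfrac{\alpha+1}{q}$, which lies in $(0,1]$ by (i) and $s>0$, and choose $q_1\in[r,\tfrac{Nr}{N-sr}]$ with $\tfrac{\alpha+1}{q_1}=1-\tfrac1\rho$, which is possible precisely by (iii); since $s>0$ and (ii) holds one has $B^s_{r,2}(\R^N)\hookrightarrow L^{q_1}(\R^N)$. Then \eqref{fu} and H\"older in $x$, absorbing $K$ in $L^\infty_x$, give pointwise in $t$
\begin{equation*}
	\bigl\|(\chi_T(t-\tau)-\chi_T(t))K(f(u)-f(v))\bigr\|_{L^{\rho'}_x}\lesssim |\chi_T(t-\tau)-\chi_T(t)|\bigl(\|u\|_{B^s_{r,2}}^\alpha+\|v\|_{B^s_{r,2}}^\alpha\bigr)\|u-v\|_{B^s_{r,2}};
\end{equation*}
H\"older in $t$ then factors out $\|\chi_T(\cdot-\tau)-\chi_T(\cdot)\|_{L^l_t}$ (using $\tfrac1{\gamma'}=\tfrac1l+\tfrac{\alpha+1}{q}$) and leaves $\|u\|_{L^qB^s_{r,2}}^\alpha\|u-v\|_{L^qB^s_{r,2}}$, and forming the weighted $\tau$-integral on the left of \eqref{2232} together with the difference characterisation of the $B^{s/4}_{l,2}$-norm recalled in Section~\ref{s2} and the bound $\|\chi_T\|_{B^{s/4}_{l,2}}\lesssim T^{1/l-s/4}=T^\sigma$ produces \eqref{2232} exactly as in \eqref{293}; note $\|u\|_{L^qB^s_{r,2}}\le\|u\|_{\mathcal X^s}$, and similarly for $v$ and $u-v$.

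It then remains to produce admissible pairs satisfying (i)--(iii), and my plan is to run the same four-case split as in Lemma~\ref{l1} but with the threshold parameter set to $0$ (the weight being bounded, not $\sim|x|^{-b}$): for $(N-2s)\alpha<2s$ take $\gamma=q=\infty,\ \rho=r=2$; for $2s\le(N-2s)\alpha<4+2s$ keep $q=\infty,\ r=2$ and take the $\ep$-perturbation of Case~2's choice with $b=0$ for $(\gamma,\rho)$; for $(N-2s)\alpha\ge4+2s$ with $N\ge5$ take $\gamma=2,\ \rho=\tfrac{2N}{N-4}$ and the $\ep$-perturbed $(q,r)$ of Case~3 with $b=0$; and for $(N-2s)\alpha\ge4+2s$ with $N\le4$ take the $\ep$-perturbed pairs of Case~4 with $b=0$. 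In each case the hypothesis $(N-2s)\alpha<8-\tfrac{2N}{\beta}<8$ leaves a strictly positive gap, so $\ep$ can be chosen small enough that (i)--(iii) and the admissibility identities $\tfrac4\gamma+\tfrac N\rho=\tfrac4q+\tfrac Nr=\tfrac N2$ all hold simultaneously; the verification is the identical elementary computation carried out in Lemma~\ref{l1} with ``$b$'' replaced by $0$, and I would simply cite it.

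The only step that requires any thought is precisely this simultaneous compatibility of (i)--(iii) with the admissibility relations across the whole $H^s$-subcritical range $(N-2s)\alpha<8-\tfrac{2N}{\beta}$. I expect this to be the one point of substance, but it is not genuinely new: deleting ``$b$'' only \emph{relaxes} constraint (iii) relative to (a3) in Lemma~\ref{l1}, so the same endpoint choices and $\ep$-perturbations that worked there work here a fortiori, and the remainder is bookkeeping.
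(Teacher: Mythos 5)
Your proposal follows the paper's proof of Lemma \ref{l2} essentially verbatim: the same reduction to a pair of admissible exponents satisfying your (i)--(iii) (with the $L^p$ norm of $K$ from Lemma \ref{l1} replaced by the two-sided embedding window $\frac1r-\frac sN\le\frac1p\le\frac1r$ for $B^s_{r,2}\hookrightarrow L^p$), the same scaling $\|\chi_T\|_{B^{s/4}_{l,2}}\lesssim T^{1/l-s/4}$, and the same four-case index selection obtained from Lemma \ref{l1} by setting $b=0$. One small caveat: the closing claim that the Lemma \ref{l1} choices work ``a fortiori'' is not quite accurate, because your condition (iii) contains the additional upper bound $1-\frac1\rho\le\frac{\alpha+1}{r}$, which has no counterpart in (a3) and must be checked separately; it does hold in each case for $\ep$ small enough (the paper adjusts its $\ep$-smallness conditions for exactly this reason, e.g.\ requiring $\ep<\frac{2s(\alpha+1)}{N}$ in Case 2), so the argument is correct, but that verification is not automatic.
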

\begin{proof}
	We first claim that if $\left(\gamma,\rho\right)$, $\left(q,r\right)\in\Lambda_b$ are two admissible pairs that satisfy
	\begin{numcases}
		{\ }	1-\frac{1}{\gamma}-\frac{\alpha+1 }{q}-\frac{s}{4}>0,\label{b1}\\
		r<\frac{N}{s},\label{b2}\\
		\frac{\alpha +1}{r}>	1-\frac{1}{\rho}>\left(\alpha +1\right)\left(\frac{1}{r}-\frac{s}{N}\right),\label{b3}
	\end{numcases}
	then the inequality (\ref{2232}) holds with $\sigma=1-\frac{1}{\gamma}-\frac{\alpha +1}{q}-\frac{s}{4}$.
	In fact, let $p,l$ be given by $1-\frac{1}{\rho}=\frac{\alpha +1}{p}$ and $1-\frac{1}{\gamma}=\frac{\alpha+1 }{q}+\frac{1}{l}$, respectively.  Then by (\ref{b3}) we have  $\frac{1}{r}-\frac{s}{N}<\frac{1}{p}<\frac{1}{r}$; so that  the embedding $B^{s}_{r,2}\left(\R^N\right)\hookrightarrow L^p(\R^N)$ holds.  Similar to (\ref{293}), we  deduce from H\"older's inequality and Sobolev's embedding $B^{s}_{r,2}\left(\R^N\right)\hookrightarrow L^p(\R^N)$ that
	\begin{eqnarray}\label{295}
		&&\left(\int_{-\infty }^{\infty }\left(\left|\tau\right|^{-s/4 }\left\|\left(\chi_T(t-\tau)-\chi_T(\tau)\right)K(x)\left(f(u)-f(v)\right)\right\|_{L^{\gamma'}L^{\rho'}}\right)^2 \frac{\mathrm{d}\tau}{\left|\tau\right|} \right)^{1/2}\notag\\
		&\lesssim & \left\|\chi_T\right\|_{B^{s/4}_{l,2}}\left(\left\|u\right\|_{L^qB^{s}_{r,2}}^\alpha +\left\|v\right\|_{L^qB^{s}_{r,2}}^\alpha \right)\left\|u-v\right\|_{L^qB^{s}_{r,2}},
	\end{eqnarray} where we also used the boundedness of $K(x)$. 	
	The inequality (\ref{2232}) is now an immediate consequence of (\ref{295}) and $\left\|\chi_T\right\|_{B^{s/4}_{l,2}}\lesssim C_{\chi}T^{\frac{1}{l}-\frac{s}{4}}=C_{\chi}T^{1-\frac{1}{\gamma}-\frac{\alpha+1 }{q}-\frac{s}{4}}$.

	To  prove Lemma \ref{l2}, it suffices to  provide two biharmonic admissible pairs $\left(\gamma,\rho\right),\left(q,r\right)\in\Lambda_b$ that satisfy (\ref{b1})--(\ref{b3}). We consider four cases.
	
	\textbf{Cases 1:} $\left(N-2s\right)\alpha <2s$. Let $\gamma=q=\infty$, $\rho=r=2$.
	
	\textbf{Cases 2:} $2s\le\left(N-2s\right)\alpha<4+2s$. Let $q=\infty$, $r=2$ and
	\begin{equation}
		\gamma=\frac{8}{\left(N-2s\right)\alpha -2s+\ep},\qquad  \rho=\frac{2N}{N+2s-\left(N-2s\right)\alpha -\ep},\notag
	\end{equation}
	where $\ep>0$ sufficiently small such that
	\begin{equation}
		\ep<\min \left\{4+2s-\left(N-2s\right)\alpha, \ 8-\left(N-2s\right)\alpha, \ \frac{2s(\alpha +1)}{N}  \right\}. \notag
	\end{equation}
	
	\textbf{Cases 3:} $\left(N-2s\right)\alpha \ge 4+2s$ and $N\ge5$. Let $\gamma=2$, $\rho=\frac{2N}{N-4}$, and
	\begin{equation}
		q=\frac{8\left(\alpha +1\right)}{\left(N-2s\right)\alpha -\left(4+2s\right)+2\ep},\qquad  r=\frac{2N\left(\alpha +1\right)}{N+2s\left(\alpha +1\right)+4-2\ep},\notag
	\end{equation}
	where $\ep>0$ sufficiently small such that
	\begin{equation}
		\ep<\min \left\{\frac{8-\left(N-2s\right)\alpha }{2},\ s\right\}. \notag
	\end{equation}

	\textbf{Cases 4:} $\left(N-2s\right)\alpha \ge 4+2s$ and $N\le4$. Let $\gamma=\frac{8}{N(1-\ep)}$, $\rho=\frac{2}{\ep}$, and
	\begin{equation}
		q=\frac{8(\alpha +1)}{(N-2s)\alpha-(1-2\ep)N-2s},\qquad r=\frac{N(\alpha +1)}{(1-\ep)N+s(\alpha +1)},\notag
	\end{equation}
	where $\ep>0$ sufficiently small such that
	\begin{equation}
		\ep< \min \left\{\frac{1}{2},\ \frac{8-(N-2s)\alpha }{N}\right\}.    \notag
	\end{equation}
	
	In each case, we can  verify that  $\left(\gamma,\rho\right)$, $\left(q,r\right)\in\Lambda_b$,  (\ref{b1})-(\ref{b3}) hold and  omit the details.
\end{proof}
\begin{lemma}\label{l3}
	Let $N\ge1$, $\beta>\max \left\{2,\frac{N}{4}\right\} $, $0<s\le2$, $s<\frac{N}{2}$,  $0<\alpha$,  $\left(N-2s\right)\alpha <8-\frac{2N}{\beta}$, $f\in\mathcal{C}(\alpha )$, and $K(x)\in L^1(\R^N)\cap L^\beta(\R^N)$.   There exist $\sigma>0$ and $\widetilde{q}>1$, $\left(\gamma,\rho\right)$, $\left(q,r\right)\in\Lambda_b$ with $\widetilde{q}<q$, $r<\min \left\{\frac{N}{s},4^*\right\} $ such that
	\begin{align}\label{2233}
		&\left(\int_{-\infty }^{\infty }\left(\left|\tau\right|^{-s/4 }\left\|\chi_T(t-\tau)K(x)\left(\left(f(u)-f(v)\right)_\tau-\left(f(u)-f(v)\right)\right)\right\|_{L^{\gamma'}L^{\rho'}}\right)^2 \frac{\mathrm{d}\tau}{\left|\tau\right|} \right)^{1/2}\notag\\
		&\lesssim   T^\sigma \left(\left\|u\right\|_{\mathcal{X}^s}^\alpha +\left\|v\right\|_{\mathcal{X}^s}^\alpha \right)\left\|u-v\right\|_{\mathcal{X}^s}+1_{\alpha <1}\left\|u-v\right\|_{L^{\widetilde{q}}_{\text{uloc},T}L^{\frac{Nr}{N-sr}}}^\alpha \left\|v\right\|_{\mathcal{X}^s}.
	\end{align}
\end{lemma}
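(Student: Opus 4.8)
The plan is to mirror the two-step structure of Lemmas~\ref{l1} and \ref{l2}. First I isolate a claim: (\ref{2233}) holds as soon as the biharmonic admissible pairs $(\gamma,\rho),(q,r)\in\Lambda_b$ satisfy a short list of scaling/integrability conditions -- an analogue of (\ref{a1})--(\ref{a3}) together with $\widetilde q<q$ and $r<\min\{N/s,4^*\}$. Second, I produce such pairs by the same four-case analysis in the size of $(N-2s)\alpha$ used in Lemma~\ref{l1} (here with $b=N/\beta$), introducing a small $\ep>0$ and reading off $\sigma$ as the surviving power of $T$. The new feature relative to Lemmas~\ref{l1}--\ref{l2} is that the $\tau$-difference now falls on $K(x)\big(f(u)-f(v)\big)$, not on $\chi_T$, so in place of $\|\chi_T\|_{B^{s/4}_{l,2}}\lesssim T^{1/l-s/4}$ we must push the $L^2$-averaged fractional-in-time regularity encoded in the $B^{s/4}_{q,2}L^r$ part of $\|u\|_{\mathcal X^s},\|v\|_{\mathcal X^s}$ through the nonlinearity $f$.

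To do this, set $w=u-v$, and for $\theta\in[0,1]$ put $p_\theta(t)=(1-\theta)u(t-\tau)+\theta u(t)$, $q_\theta(t)=(1-\theta)v(t-\tau)+\theta v(t)$, so that, by the mean value theorem applied to the two time differences separately,
\begin{align*}
\big(f(u(t))-f(v(t))\big)-\big(f(u(t-\tau))-f(v(t-\tau))\big)=\int_0^1\Big[&f'(p_\theta(t))\big(w(t)-w(t-\tau)\big)\\
&+\big(f'(p_\theta(t))-f'(q_\theta(t))\big)\big(v(t)-v(t-\tau)\big)\Big]d\theta .
\end{align*}
Using $|f'(z)|\lesssim|z|^\alpha$ on the first term and, when $\alpha\ge1$, the defining inequality of $\mathcal C(\alpha)$ on the second, one obtains a pointwise bound for $K(x)$ times this quantity whose nonlinear factors sit at the lagged time $t-\tau$ and whose only $\tau$-dependent oscillation is through $w(t)-w(t-\tau)$ and $v(t)-v(t-\tau)$. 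Inserting it into the $\tau$-integral, distributing the spatial integral by H\"older with $K\in L^1\cap L^\beta\hookrightarrow L^p$ and $B^s_{r,2}\hookrightarrow L^{\frac{Nr}{N-sr}}$, recognising $\int\big(|\tau|^{-s/4}\|w(t)-w(t-\tau)\|\big)^2\tfrac{d\tau}{|\tau|}$ and $\int\big(|\tau|^{-s/4}\|v(t)-v(t-\tau)\|\big)^2\tfrac{d\tau}{|\tau|}$ as fractional Besov seminorms bounded by $\|w\|_{\mathcal X^s}$ and $\|v\|_{\mathcal X^s}$, and extracting a power $T^\sigma$ by H\"older against $\chi_T$, gives (\ref{2233}) with the $1_{\alpha<1}$ term absent. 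Here every power of $u,v$ carried at the \emph{current} time $t$ enters only to first power when $\alpha\ge1$, so the $\tau$-moving window produced by $\chi_T(t-\tau)$ costs nothing: a first-power $L^q$-norm over a shifted interval is still controlled by $\|\cdot\|_{\mathcal X^s}$, up to a harmless power of $T$.

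When $0<\alpha<1$ the Lipschitz-type bound fails ($|z|^{\alpha-1}$ is singular) and $f'$ is only $\alpha$-H\"older, so (combining $\mathcal C(\alpha)$ with $|f'(z)|\lesssim|z|^\alpha$) one has only $|f'(p_\theta(t))-f'(q_\theta(t))|\lesssim|w(t)|^\alpha+|w(t-\tau)|^\alpha$. The first term of the decomposition is still clean and produces $T^\sigma\big(\|u\|_{\mathcal X^s}^\alpha+\|v\|_{\mathcal X^s}^\alpha\big)\|u-v\|_{\mathcal X^s}$; but in the second term the $\alpha$-th power $|w(t)|^\alpha$ sits at the current time $t$ and cannot be reduced to first power, so after H\"older it is trapped on the $\tau$-moving interval where $\chi_T(t-\tau)\neq0$. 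This is precisely the mechanism forcing the residual term $1_{\alpha<1}\|u-v\|_{L^{\widetilde q}_{\text{uloc},T}L^{\frac{Nr}{N-sr}}}^\alpha\|v\|_{\mathcal X^s}$: the difference $v(t)-v(t-\tau)$ still supplies the $|\tau|^{-s/4}$-smoothness, hence the factor $\|v\|_{\mathcal X^s}$ to first power; $B^s_{r,2}\hookrightarrow L^{\frac{Nr}{N-sr}}$ fixes the spatial exponent; $\widetilde q<q$ is the room needed to absorb the H\"older against $\chi_T$; and $r<4^*$ keeps this residual norm within reach. I expect the main obstacle to be precisely this $\alpha<1$ bookkeeping -- selecting $\gamma,\rho,q,r,\widetilde q$ in each of the four cases so that a strictly positive power of $T$ survives in \emph{every} term while all admissibility, H\"older and Sobolev conditions hold at once; the $\alpha\ge1$ case and the case-by-case verification of the exponent conditions are routine, exactly as in Lemmas~\ref{l1}--\ref{l2}.
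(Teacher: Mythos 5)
Your proposal follows essentially the same route as the paper's proof: the identity you write for the double difference is exactly the paper's $A_1+A_2$ decomposition (\ref{10105}), your pointwise bounds ($|f'(z)|\lesssim|z|^\alpha$ on the first term, the $\mathcal C(\alpha)$ inequality for $\alpha\ge1$ and the $\alpha$-H\"older bound for $\alpha<1$ on the second) are (\ref{10106})--(\ref{10107}), and your explanation of why the $\alpha<1$ case leaves the residual $1_{\alpha<1}\|u-v\|^\alpha_{L^{\widetilde q}_{\text{uloc},T}L^{Nr/(N-sr)}}\|v\|_{\mathcal X^s}$ term on the $\tau$-moving window is precisely the paper's mechanism. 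The only part you defer --- exhibiting the admissible pairs --- the paper does with a reduced two-case split ($N\ge 2s+4$ versus $N<2s+4$) and with a third pair $(m,n)\in\Lambda_b$ reserved for the time-difference factor (so the exponent conditions carry $\alpha$ rather than $\alpha+1$ and an extra $1/m$, $1/n$), a minor bookkeeping difference from the four-case scheme of Lemma \ref{l1} you propose to imitate.
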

\begin{proof}
	Let $b=\frac{N}{\beta}$, then we have $0<b<\min \left\{\frac{N}{2},4\right\} $ and $(N-2s)\alpha <8-2b$.
	We first claim that if $\left(\gamma,\rho\right)$, $\left(q,r\right)$, $\left(m,n\right)\in\Lambda_b$ are three admissible pairs that satisfy
	\begin{numcases}
		{\ }	1-\frac{1}{\gamma}-\frac{\alpha }{q}-\frac{1}{m}>0,\label{c1}\\
		r<\frac{N}{s},\label{c2}\\
		1-\frac{1}{\rho}-\alpha \left(\frac{1}{r}-\frac{s}{N}\right)-\frac{1}{n}>\frac{b}{N}.\label{c3}
	\end{numcases}
	Then we can find $1<\widetilde{q}<q$ such that the inequality (\ref{2233}) holds with $\sigma=1-\frac{1}{\gamma}-\frac{\alpha }{q}-\frac{1}{m}$.
	In fact, from
	\begin{eqnarray}\label{10105}
		&&\left(f\left(u_{\tau}\right)-f\left(v_{\tau}\right)\right)-\left(f(u)-f(v)\right) \notag\\
		&=&\left(\left(u_{\tau}-v_{\tau}\right)-\left(u-v\right)\right) \int_{0}^{1} f^{\prime}\left(u+\theta\left(u_{\tau}-u\right)\right) d \theta \notag\\
		&&+\left(v_{\tau}-v\right) \int_{0}^{1}\left[f^{\prime}\left(u+\theta\left(u_{\tau}-u\right)\right)-f^{\prime}\left(v+\theta\left(v_{\tau}-v\right)\right)\right] d \theta\notag \\
		&=&A_{1}+A_{2},
	\end{eqnarray}
	we have,  by applying (\ref{fu})
	\begin{equation}\label{10106}
		\left|A_{1}\right| \lesssim\left|\left(u_{\tau}-v_{\tau}\right)-\left(u-v\right)\right|\left(\left|u\right|^{\alpha}+\left|u_{\tau}\right|^{\alpha}\right),
	\end{equation}
	and
	\begin{equation}\label{10107}
		\left|A_{2}\right| \leq\left\{\begin{array}{ll}			\left|v_{\tau}-v\right|\left(\left|u\right|+\left|u_{\tau}\right|+|v|+\left|v_{\tau}\right|\right)^{\alpha-1}\left(\left|u-v\right|
			+\left|u_{\tau}-v_{\tau}\right|\right), &
			\text {if } \alpha \geqslant 1, \\
			\left|v_{\tau}-v\right|\left(\left|u-v\right|^{\alpha}+\left|u_{\tau}-v_{\tau}\right|^{\alpha}\right), &
			\text {if } 0<\alpha<1.
		\end{array}\right.
	\end{equation}
	Put $1-\frac{1}{\rho}=\frac{1}{p}+\alpha \left(\frac{1}{r}-\frac{s}{N}\right)+\frac{1}{n}$. Then by (\ref{c3}), we have $1<p<\frac{N}{b}=\beta$ and thus $K(x)\in L^1(\R^N)\cap L^\beta(\R^N) \subset L^p(\R^N)$.
	From (\ref{10105}),  (\ref{10106}) and (\ref{10107}), the boundedness of $\chi_T$ and H\"older's inequality, we get
	\begin{eqnarray}
		&&\left\|\chi_T(t-\tau)K(x)\left(\left(f(u)-f(v)\right)_\tau-\left(f(u)-f(v)\right)\right)\right\|_{L^{\gamma'}L^{\rho'}}\notag\\
		&\lesssim&  T^{\sigma}\|(u-v)_\tau-(u-v)\|_{L^m L^n}\left\|K(x)\right\|_{L^p}\left(\left\|u\right\|_{L^qL^{\frac{Nr}{N-sr}}}^\alpha+\left\|v\right\|_{L^qL^{\frac{Nr}{N-sr}}}^\alpha \right)\notag\\
		&&+\begin{cases}T^{\sigma}\left\|K(x)\right\|_{L^p}\|v_\tau-v\|_{L^m L^n}\left(\left\|u\right\|_{L^qL^{\frac{Nr}{N-sr}}}^{\alpha -1}+\left\|v\right\|_{L^qL^{\frac{Nr}{N-sr}}}^{\alpha -1}\right)\left\|u-v\right\|_{L^qL^{\frac{Nr}{N-sr}}},\ \alpha\ge1,\\
			\|v_\tau-v\|_{L^m L^n}\left\|K(x)\right\|_{L^p}\|u-v\|_{L^{\widetilde{q}}_{\text{uloc},T} L^{\frac{Nr}{N-sr}}}^\alpha,\  0<\alpha<1,
		\end{cases}\notag
	\end{eqnarray}
	where $\sigma=1-\frac{1}{\gamma}-\frac{\alpha }{q}-\frac{1}{m}$ and $1<\widetilde{q}<q$ is given by $1-\frac{1}{\gamma}=\frac{\alpha }{\widetilde{q}}+\frac{1}{m}$.
	This inequality together with Sobolev's embedding $B^{s}_{r,2}\left(\R^N\right)\hookrightarrow L^{\frac{Nr}{N-sr}}\left(\R^N\right)$ implies
	\begin{align}\label{2103}
		&\left(\int_{-\infty }^{\infty }\left(\left|\tau\right|^{-s/4 }\left\|\chi_T(t-\tau)K(x)\left(\left(f(u)-f(v)\right)_\tau-\left(f(u)-f(v)\right)\right)\right\|_{L^{\gamma'}L^{\rho'}}\right)^2 \frac{\mathrm{d}\tau}{\left|\tau\right|} \right)^{1/2}\notag\\
		&\lesssim   T^{\sigma}\left\|u-v\right\|_{B^{s/4}_{m,2}L^n}\left(\left\|u\right\|_{L^qB^{s}_{r,2}}^\alpha +\left\|v\right\|_{L^qB^{s}_{r,2}}^\alpha \right)\notag\\
		&\qquad + \begin{cases}T^{\sigma}\|v\|_{B^{s/4}_{m,2}L^n}\left(\left\|u\right\|_{L^qB^{s}_{r,2}}^{\alpha -1}+\left\|v\right\|_{L^qB^{s}_{r,2}}^{\alpha -1}\right)\left\|u-v\right\|_{L^qB^{s}_{r,2}},\ \alpha\ge1,\\
			\|v\|_{B^{s/4}_{m,2}L^n}\|u-v\|_{L^{\widetilde{q}}_{\text{uloc},T} L^{\frac{Nr}{N-sr}}}^\alpha,\  0<\alpha<1.
		\end{cases}		
	\end{align}
	The inequality (\ref{2233}) is now an immediate consequence of (\ref{2103}) and Young's inequality.
	
	To prove Lemma \ref{l3}, it suffices to  provide three biharmonic admissible pairs $\left(\gamma,\rho\right)$,  $\left(q,r\right)$, $(m,n)\in\Lambda_b$ with $r<4^*$ that satisfy (\ref{c1})--(\ref{c3}). We consider two cases.
	
	\textbf{Case 1:} $N\ge 2s+4$. Let	
	\begin{equation}
		\begin{cases}
			\gamma=2,\qquad &\rho=\frac{2N}{N-4},\\
			q=m=\frac{4\left(\alpha +1\right)}{2-\ep},\qquad &r=n=\frac{2N\left(\alpha +1\right)}{N\left(\alpha +1\right)-4+2\ep},
		\end{cases}\notag
	\end{equation}
	where $\ep>0$ sufficiently small such that
	\begin{equation}
		\ep<\min \left\{\frac{8-2b-\left(N-2s\right)\alpha }{2},\ 2\right\} .\notag
	\end{equation}
	Then by direct calculation, we have  $\left(\gamma,\rho\right)$, $\left(q,r\right)\in\Lambda_b$ with $r<4^*$, and
	\begin{equation}
		\begin{cases}
			1-\frac{1}{\gamma}-\frac{\alpha }{q}-\frac{1}{m}=\frac{\ep}{4}>0,\\
			\frac{N}{s}-r=\frac{N\left(\left(N-2s\right)\alpha +N-2s-4+2\ep\right)}{s\left(N\left(\alpha +1\right)-4+2\ep\right)}>0,\\
			1-\frac{1}{\rho}-\alpha \left(\frac{1}{r}-\frac{s}{N}\right)-\frac{1}{n}-\frac{b}{N}=\frac{8-2b-\left(N-2s\right)\alpha-2\ep }{2N}>0.
		\end{cases}		\notag
	\end{equation}
	Hence we see that (\ref{c1})--(\ref{c3}) hold.
	
	\textbf{Cases 2:} $N<2s+4$. Let
	\begin{equation}
		\begin{cases}
			\gamma=m=\frac{8}{b+2\alpha\ep},\qquad  &\rho=n=\frac{2N}{N-b-2\alpha\ep},\\
			q=\frac{8}{N-2s-2\ep},\qquad  &r=\frac{N}{s+\ep},
		\end{cases}	\notag
	\end{equation}
	where $\ep>0$ sufficiently small such that
	\begin{equation}
		\ep<\min \left\{\frac{N-b}{2\alpha },\ \frac{8-2b-(N-2s)\alpha }{2 \alpha }, \ \frac{N-2s}{2}\right\}. \notag
	\end{equation}
	Then it is easy to check that $\left(q,r\right)$, $\left(\gamma,\rho\right)\in\Lambda_b$ with $r<\frac{N}{s}<4^*$. Moreover, by direct computation, we have
	\begin{equation}
		\begin{cases}
			1-\frac{1}{\gamma}-\frac{\alpha }{q}-\frac{1}{m}=\frac{8-2b-\left(N-2s\right)\alpha-2\ep \alpha  }{8}>0,\\
			1-\frac{1}{\rho}-\alpha  \left(\frac{1}{r}-\frac{s}{N}\right)-\frac{1}{n}-\frac{b}{N}=\frac{\alpha \ep}{N}>0.
		\end{cases}\notag
	\end{equation}
	Hence we see that (\ref{c1})--(\ref{c3}) hold.
\end{proof}
\begin{lemma}\label{l4}
	Let $N\ge1$, $\beta>\max \left\{2,\frac{N}{4}\right\} $, $0<s\le2$, $s<\frac{N}{2}$,  $0<\alpha$,  $\left(N-2s\right)\alpha <8-\frac{2N}{\beta}$, $f\in\mathcal{C}(\alpha )$, and $K(x)\in L^\infty (\R^N)$.  There exist $\sigma>0$ and $\widetilde{q}>1$,  $\left(\gamma,\rho\right)$, $\left(q,r\right)\in\Lambda_b$ with $\widetilde{q}<q$, $r<\min \left\{\frac{N}{s},4^*\right\} $ such that
	\begin{align}\label{2234}
		&\left(\int_{-\infty }^{\infty }\left(\left|\tau\right|^{-s/4 }\left\|\chi_T(t-\tau)K(x)\left(\left(f(u)-f(v)\right)_\tau-\left(f(u)-f(v)\right)\right)\right\|_{L^{\gamma'}L^{\rho'}}\right)^2 \frac{\mathrm{d}\tau}{\left|\tau\right|} \right)^{1/2}\notag\\
		&\lesssim   T^\sigma \left(\left\|u\right\|_{\mathcal{X}^s}^\alpha +\left\|v\right\|_{\mathcal{X}^s}^\alpha \right)\left\|u-v\right\|_{\mathcal{X}^s}+1_{\alpha <1}\|u-v\|_{L^{\widetilde{q}}_{\text{uloc},T} L^{\frac{Nr}{N-sr}}}^\alpha \left\|v\right\|_{\mathcal{X}^s}.
	\end{align}
\end{lemma}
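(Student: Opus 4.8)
The plan is to repeat the proof of Lemma~\ref{l3} essentially verbatim, the only structural change being that the spatial integrability supplied there by $K\in L^1(\R^N)\cap L^\beta(\R^N)$ is now furnished, at no cost, by $\|K\|_{L^\infty}$. Concretely, I would first isolate the following claim --- the exact $L^\infty$-counterpart of the one opening the proof of Lemma~\ref{l3}: if $(\gamma,\rho)$, $(q,r)$, $(m,n)\in\Lambda_b$ are three biharmonic admissible pairs with
\[
1-\fc1\gamma-\fc\alpha q-\fc1m>0,\qquad r<\fc Ns,\qquad 1-\fc1\rho=\alpha\Bigl(\fc1r-\fc sN\Bigr)+\fc1n,
\]
then there is $1<\widetilde q<q$ for which (\ref{2234}) holds with $\sigma=1-\fc1\gamma-\fc\alpha q-\fc1m$. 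The first inequality is the time-integrability requirement, the analogue of (\ref{c1}); the last relation is what condition (\ref{c3}) collapses to once the term $b/N$ coming from $K\in L^p$ is deleted: since $K$ is now bounded there is no free Lebesgue exponent left to absorb the H\"older defect, so every power of $u$ and of $u-v$ must be placed at the Sobolev endpoint $L^{Nr/(N-sr)}$, and consequently $\rho$ is pinned to $n$ and $r$ by the displayed identity. The requirement $r<\min\{N/s,4^*\}$ of the statement is imposed, as in Lemma~\ref{l3}, on the pair $(q,r)$.

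To prove the claim I would run the computation of Lemma~\ref{l3} with no change other than replacing $\|K\|_{L^p}$ by $\|K\|_{L^\infty}$: decompose $(f(u)-f(v))_\tau-(f(u)-f(v))=A_1+A_2$ as in (\ref{10105}), bound $A_1,A_2$ pointwise by (\ref{10106})--(\ref{10107}) via (\ref{fu}), and apply H\"older in space with the splitting $1-\fc1\rho=\alpha(\fc1r-\fc sN)+\fc1n$ and H\"older in time with $1-\fc1\gamma=\fc\alpha q+\fc1m+\sigma$, using the boundedness of $\chi_T$ together with $\|\chi_T\|_{L^{1/\sigma}}\lesssim C_\chi T^{\sigma}$. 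The embedding $B^s_{r,2}(\R^N)\hra L^{Nr/(N-sr)}(\R^N)$ then converts the spatial $L^{Nr/(N-sr)}$-norms of $u,v,u-v$ into $B^s_{r,2}$-norms, the Besov-in-time characterisation of the $|\tau|^{-s/4}$-weighted integral converts $\|(u-v)_\tau-(u-v)\|_{L^mL^n}$ and $\|v_\tau-v\|_{L^mL^n}$ into $\|u-v\|_{B^{s/4}_{m,2}L^n}$ and $\|v\|_{B^{s/4}_{m,2}L^n}$, and finally $(q,r),(m,n)\in\Lambda_b$ identify these with $\|\cdot\|_{\mathcal X^s}$; for $0<\alpha<1$ the computation produces the extra term $\|v\|_{B^{s/4}_{m,2}L^n}\|u-v\|^\alpha_{L^{\widetilde q}_{\text{uloc},T}L^{Nr/(N-sr)}}$ exactly as in Lemma~\ref{l3}, with $1<\widetilde q<q$ given by $1-\fc1\gamma=\fc\alpha{\widetilde q}+\fc1m$ (the inequality $\widetilde q<q$ being equivalent to $\sigma>0$), and Young's inequality finishes the claim.

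It then remains to exhibit, for all data allowed in the lemma, three biharmonic admissible pairs meeting the three displayed conditions together with $r<\min\{N/s,4^*\}$; this is the only genuinely computational part, and I would carry it out by a case analysis in the sizes of $N$ and $(N-2s)\alpha$, in the spirit of the proofs of Lemmas~\ref{l1}--\ref{l3}, after setting $b=0$ and re-tuning a small parameter $\ep>0$ so that the H\"older balance is restored in the absence of any integrability coming from $K$. For instance, when $N<2s+4$ one is led to take
\[
\gamma=m=\fc{8}{\alpha\ep},\qquad \rho=n=\fc{2N}{N-\alpha\ep},\qquad q=\fc{8}{N-2s-2\ep},\qquad r=\fc{N}{s+\ep},
\]
and then check directly that these pairs lie in $\Lambda_b$, that $r<N/s$ (hence $r<4^*$ in this range), that $1-\fc1\gamma-\fc\alpha q-\fc1m=\fc{8-(N-2s)\alpha}{8}>0$, and that $1-\fc1\rho=\alpha(\fc1r-\fc sN)+\fc1n=\fc12+\fc{\alpha\ep}{2N}$; when $N\ge 2s+4$ one argues analogously, placing $(\gamma,\rho)$ at the biharmonic endpoint $\gamma=2$, $\rho=2N/(N-4)$ and tuning $(q,r)=(m,n)$ so that the $\rho$-identity and the positivity of $\sigma$ both hold, subdividing further according to the size of $(N-2s)\alpha$ if necessary, as in Lemma~\ref{l2}. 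The main obstacle is exactly this bookkeeping: one must verify that the strict subcriticality margin $8-(N-2s)\alpha>0$ --- which follows from the hypothesis $(N-2s)\alpha<8-\tfrac{2N}{\beta}$ --- leaves enough room to absorb all the $\ep$-perturbations while keeping every exponent inside $\Lambda_b$ and keeping $r$ below both $N/s$ and $4^*$.
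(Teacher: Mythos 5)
Your overall strategy is sound and the reduction is set up correctly, but it is not the route the paper takes, and the route you chose leaves one case genuinely unfinished. The paper's proof of Lemma \ref{l4} does \emph{not} transplant the three-pair structure of Lemma \ref{l3}: it takes a single pair $q=\frac{8(\alpha+2)}{(N-2s)\alpha+2\ep}$, $r=\frac{N(\alpha+2)}{N+\alpha s-\ep}$, sets $(\gamma,\rho)=(m,n)=(q,r)$, and crucially places the $\alpha$-powers of $u,v$ (and of $u-v$ when $\alpha<1$) not at the Sobolev endpoint $L^{Nr/(N-sr)}$ but in an intermediate $L^p$ defined by $1=\frac2r+\frac\alpha p$; the verification $\frac1r-\frac sN<\frac1p<\frac1r$ then gives the non-endpoint embedding $B^s_{r,2}\hookrightarrow L^p$, and this single choice of indices works uniformly in $N$, $s$, $\alpha$ — no case analysis at all. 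Your version instead enforces the exact endpoint balance $1-\frac1\rho=\alpha\bigl(\frac1r-\frac sN\bigr)+\frac1n$ and pushes all the freedom into the choice of three admissible pairs. That claim itself is fine (writing $x=\frac1\gamma$, $y=\frac1q$, $z=\frac1m$, your two conditions collapse to $x+z+\alpha y=\frac{(N-2s)\alpha}{8}<1$, which is solvable with all exponents admissible), and your worked example for $N<2s+4$ checks out exactly.

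The gap is in the case $N\ge 2s+4$. Your proposed anchor $\gamma=2$, $\rho=\frac{2N}{N-4}$ with $(m,n)=(q,r)$ forces, via the balance identity, $\frac1r=\frac{N+4+2\alpha s}{2N(\alpha+1)}$, hence $r\ge2$ only when $(N-2s)\alpha\ge4$; for $(N-2s)\alpha<4$ this $r$ falls below $2$ and the pair leaves $\Lambda_b$. The natural fallback $(\gamma,\rho)=(\infty,2)$ with $(m,n)=(q,r)$ gives $r=\frac{2N(\alpha+1)}{N+2\alpha s}$, and the requirement $r<4^*$ becomes $(N-2s-4)\alpha<4$, which can fail under the hypotheses (e.g.\ $\alpha=\frac12$, $N=2s+14$, so that $(N-2s)\alpha=7<8$ but $(N-2s-4)\alpha=5$). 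So the "subdivide further if necessary" step is not optional bookkeeping: as written, neither of your two candidate anchors covers the regime $N\ge 2s+4$, $(N-2s)\alpha<4$ together with the regime where $(N-2s-4)\alpha\ge4$. The fix within your framework is to abandon the endpoint anchors and take $y=\frac1q$ just below $\min\bigl\{\frac{N-2s}{8},\frac12\bigr\}$ and $x=z=\frac12\bigl(\frac{(N-2s)\alpha}{8}-\alpha y\bigr)$, which satisfies all constraints uniformly; alternatively, adopt the paper's intermediate-exponent device, which is what makes its proof a five-line index check rather than a case analysis.
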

\begin{proof}
	Let
	\begin{equation}
		q=\frac{8(\alpha +2)}{(N-2s)\alpha +2\ep},\qquad r=\frac{N(\alpha +2)}{N+\alpha s-\ep},\notag
	\end{equation}
	where $\ep>0$ sufficiently small such that
	\begin{equation}
		\ep<\min \left\{N-2s,\ \frac{8-(N-2s)\alpha }{2},\ \alpha s\right\}.
	\end{equation}
	Then by direct calculation, we have $r<4^*$ and
	\begin{equation}
		\begin{cases}
			1-\frac{\alpha +2}{q}=\frac{8-(N-2s)\alpha -2\ep}{8}>0,\\
			\frac{N}{s}-r=\frac{N(N-2s-\ep)}{s(N+\alpha s-\ep)}>0.
		\end{cases}\notag
	\end{equation}
	Let $p,\sigma$ be given by $1=\frac{2}{r}+\frac{\alpha }{p}$ and $1=\frac{1}{\sigma}+\frac{\alpha +2}{q}$, respectively.  Since
	\begin{equation}
		\begin{cases}
			1-\frac{2}{r}-\frac{\alpha }{r}=-\frac{\alpha s-\ep}{N}<0,\\
			1-\frac{2}{r}-\alpha \left(\frac{1}{r}-\frac{s}{N}\right)=\frac{\ep}{N}>0,
		\end{cases}\notag
	\end{equation}
	we have $\frac{1}{r}-\frac{s}{N}<\frac{1}{p}<\frac{1}{r}$; so that the embedding $B^s_{r,2}(\R^N)\hookrightarrow L^p(\R^N)$ holds.
	Similar to (\ref{2103}), we  deduce from H\"older's inequality and Sobolev's embedding $B^{s}_{r,2}\left(\R^N\right)\hookrightarrow L^p(\R^N)$ that
	\begin{align}\label{2107}
		&\left(\int_{-\infty }^{\infty }\left(\left|\tau\right|^{-s/4 }\left\|\chi_T(t-\tau)K(x)\left(\left(f(u)-f(v)\right)_\tau-\left(f(u)-f(v)\right)\right)\right\|_{L^{q'}L^{r'}}\right)^2 \frac{\mathrm{d}\tau}{\left|\tau\right|} \right)^{1/2}\notag\\
		&\lesssim   T^{\sigma}\left\|u-v\right\|_{B^{s/4}_{q,2}L^r}\left(\left\|u\right\|_{L^qB^{s}_{r,2}}^\alpha +\left\|v\right\|_{L^qB^{s}_{r,2}}^\alpha \right)\notag\\
		&\qquad + \begin{cases}T^{\sigma}\|v\|_{B^{s/4}_{q,2}L^r}\left(\left\|u\right\|_{L^qB^{s}_{r,2}}^{\alpha -1}+\left\|v\right\|_{L^qB^{s}_{r,2}}^{\alpha -1}\right)\left\|u-v\right\|_{L^qB^{s}_{r,2}},&\alpha\ge1,\\
			\|v\|_{B^{s/4}_{q,2}L^r}\|u-v\|_{L^{\widetilde{q}}_{\text{uloc},T} L^{\frac{Nr}{N-sr}}}^\alpha,&0<\alpha<1,
		\end{cases}		
	\end{align}
	where $1<\widetilde{q}<q$ is given by $1=\frac{2}{q}+\frac{\alpha }{\widetilde{q}}$.
	The inequality (\ref{2234}) is now an immediate consequence of (\ref{2107}) and Young's inequality.
\end{proof}
\begin{lemma}\label{l5}
	Let $N\ge1$, $\beta>\max \left\{2,\frac{N}{4}\right\} $, $0<s\le2$, $s<\frac{N}{2}$,  $0<\alpha$,  $\left(N-2s\right)\alpha <8-\frac{2N}{\beta}$, $f\in\mathcal{C}(\alpha )$, and $K(x)\in L^1(\R^N)\cap L^\beta(\R^N)$.  There exists  $\sigma>0$ and $1\le \overline{q}$, $\overline{r}\le2$ with $\frac{4}{\overline{q}}-N\left(\frac{1}{2}-\frac{1}{\overline{r}}\right)=4-s$ such that
	\begin{equation}
		\left\|K(x)\chi_T(t)\left(f(u)-f(v)\right)\right\|_{L^{\overline{q}}L^{\overline{r}}}\lesssim T^\theta\left(\left\|u\right\|_{\mathcal{X}^s}^\alpha+\left\|v\right\|_{\mathcal{X}^s}^\alpha\right)\left\|u-v\right\|_{\mathcal{X}^s}.\label{2235}
	\end{equation}
\end{lemma}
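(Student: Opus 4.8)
The plan is to deduce the claimed space-time bound from a pointwise estimate on the nonlinearity together with two applications of H\"older's inequality: one in $x$ that peels off the singular weight $K$ as an $L^p_x$ factor, and one in $t$ that turns the cut-off $\chi_T$ into an $L^l_t$ factor producing the gain $T^\theta$ with $\theta=\tfrac1l$. Throughout set $b=\frac{N}{\beta}$, so that $0<b<\min\{\frac N2,4\}$ and the subcriticality hypothesis reads $(N-2s)\alpha<8-2b$; note also that $s<\frac N2$ makes the Sobolev embedding $B^s_{r,2}(\R^N)\hookrightarrow L^{\frac{Nr}{N-sr}}(\R^N)$ available whenever $r<\frac Ns$.

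\textbf{Reduction to an index condition.} I would first establish the following claim: if $(q,r)\in\Lambda_b$ and $\overline q,\overline r\in[1,2]$ satisfy $\frac4{\overline q}-N(\frac12-\frac1{\overline r})=4-s$ together with
\begin{equation}\label{l5-ind}
	r<\frac Ns,\qquad \frac{\alpha+1}{q}<\frac1{\overline q},\qquad \frac1{\overline r}-(\alpha+1)\Big(\frac1r-\frac sN\Big)\ge\frac bN,
\end{equation}
then (\ref{2235}) holds with $\theta=\frac1{\overline q}-\frac{\alpha+1}{q}>0$. To prove the claim, let $p,l$ be defined by $\frac1{\overline r}=\frac1p+(\alpha+1)(\frac1r-\frac sN)$ and $\frac1{\overline q}=\frac1l+\frac{\alpha+1}{q}$. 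The second and third conditions in (\ref{l5-ind}) give $1<p\le\beta$ and $1\le l<\infty$; in particular $K\in L^1(\R^N)\cap L^\beta(\R^N)\subset L^p(\R^N)$ by interpolation, and $\|\chi_T\|_{L^l(\R)}\lesssim T^{1/l}$. Using (\ref{fu}), H\"older's inequality in $x$ with exponents $p$ and $\alpha+1$ copies of $\frac{Nr}{N-sr}$, and the Sobolev embedding above, one bounds $\|K\chi_T(t)(f(u)-f(v))\|_{L^{\overline r}_x}$ by $|\chi_T(t)|\,\|K\|_{L^p}\big(\|u(t)\|^\alpha_{B^s_{r,2}}+\|v(t)\|^\alpha_{B^s_{r,2}}\big)\|u(t)-v(t)\|_{B^s_{r,2}}$; taking the $L^{\overline q}_t$ norm and applying H\"older in $t$ with the splitting $\frac1{\overline q}=\frac1l+\frac{\alpha+1}{q}$ then yields (\ref{2235}).

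\textbf{Choice of exponents.} It then suffices to produce, in every parameter range allowed by the hypotheses, an admissible pair $(q,r)\in\Lambda_b$ and exponents $\overline q,\overline r\in[1,2]$ obeying the scaling identity and (\ref{l5-ind}). When $\alpha(N-2s)\le 4-2b$ I would take $q=\infty$, $r=2$ (legitimate since $s<\frac N2$); then (\ref{l5-ind}) reduces, after using the scaling identity to eliminate $\overline r$, to $\frac8{\overline q}\le 8-2b-\alpha(N-2s)$ together with the membership constraints $\frac4{4-s}\le\overline q\le 2$, and the subcriticality $(N-2s)\alpha<8-2b$ is precisely what keeps this range of admissible $\overline q$ nonempty while forcing $\theta>0$. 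For larger values of $\alpha$ and for low dimensions, I would run the same case distinction as in the proofs of Lemmas \ref{l1}--\ref{l4}: split according to whether $(N-2s)\alpha$ lies below $2s-2b$, between $2s-2b$ and $4+2s-2b$, or above $4+2s-2b$, distinguishing $N\ge5$ from $N\le4$ in the last case, and in each regime choose $(q,r)$ near an endpoint of $\Lambda_b$ (introducing a small $\ep>0$ when necessary) so that the three conditions in (\ref{l5-ind}) hold with strict inequalities.

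\textbf{Main obstacle.} The analytic content — the pointwise bound and the two H\"older steps — is routine; the real work is the index bookkeeping in the last step, namely checking that one can simultaneously arrange $(q,r)\in\Lambda_b$, the exact scaling relation $\frac4{\overline q}-N(\frac12-\frac1{\overline r})=4-s$, the bounds $1\le\overline q,\overline r\le2$, the embedding condition $r<\frac Ns$, the weight condition $\frac bN\le\frac1p<1$, and $1\le l<\infty$ so that $\theta=\frac1l>0$. All constraints are affine in the reciprocal exponents, and the feasible region is nonempty exactly under $(N-2s)\alpha<8-2b$; carrying out this verification case by case, following Lemmas \ref{l1}--\ref{l4}, completes the proof.
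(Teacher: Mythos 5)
Your proposal is correct and follows essentially the same route as the paper: the same reduction to the index conditions $r<\frac Ns$, $\frac{\alpha+1}{q}<\frac1{\overline q}$, $\frac1{\overline r}-(\alpha+1)\bigl(\frac1r-\frac sN\bigr)\ge\frac bN$, the same H\"older/Sobolev/cutoff estimate producing $\theta=\frac1{\overline q}-\frac{\alpha+1}{q}$, and the same case-by-case selection of exponents. The only point to watch in your first explicit case is the omitted constraint $\overline r\ge1$ (equivalently $\frac4{\overline q}\ge 4-s-\frac N2$), which is precisely why the paper fixes $(\overline q,\overline r)$ by splitting on $N+2s\ge4$ versus $N+2s<4$; your deferred low-dimensional case analysis would need to reproduce that split.
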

\begin{proof}
	Let $b=\frac{N}{\beta}$, then we have $0<b<\min \left\{\frac{N}{2},4\right\} $ and $(N-2s)\alpha <8-2b$.	We first claim that if $1\le \overline{q},\overline{r}\le2$ with $\frac{4}{\overline{q}}-N\left(\frac{1}{2}-\frac{1}{\overline{r}}\right)=4-s$ and  $\left(q,r\right)\in \Lambda _b$ is an admissible pair that satisfy
	\begin{numcases}
		{\ }\frac{1}{\overline{q}}-\frac{\alpha +1}{q}>0,\label{e1}\\
		r<\frac{N}{s},\label{e2}\\
		\frac{1}{\overline{r}}-(\alpha +1)\left(\frac{1}{r}-\frac{s}{N}\right)>\frac{b}{N},\label{e3}
	\end{numcases}
	then the inequality (\ref{2235}) holds with $\sigma=\frac{1}{\overline{q}}-\frac{\alpha +1}{q}$. In fact, let $1<p<\infty $ be given by $\frac{1}{\overline{r}}=\frac{1}{p}+(\alpha +1)\left(\frac{1}{r}-\frac{s}{N}\right)$, then by (\ref{e3}), we have $1<p<\frac{N}{b}=\beta$; so that $K(x)\in L^1(\R^N)\cap L^\beta (\R^N)\subset L^p(\R^N)$. Using (\ref{fu}), H\"older's inequality and Sobolev's embedding $B^s_{r,2}\left(\R^N\right) \hookrightarrow  L^{\frac{Nr}{N-sr}\left(\R^N\right)}$, we have
	\begin{eqnarray}
		&&\left\|K(x)\chi_T(t)\left(f(u)-f(v)\right)\right\|_{L^{\overline{q}}L^{\overline{r}}}\notag\\
		&\lesssim &  \left\|\left\|K\right\|_{L^p}\left(\left\|u\right\|_{L^{\frac{Nr}{N-sr}}}^\alpha +\left\|v\right\|_{L^{\frac{Nr}{N-sr}}}^\alpha \right)\left\|u-v\right\|_{L^{\frac{Nr}{N-sr}}}\right\|_{L^{\overline{q}}}\notag\\
		&\lesssim &  T^{\frac{1}{\overline{q}}-\frac{\alpha +1}{q}}\left(\left\|u\right\|_{L^{q}B^{s}_{r,2}}^\alpha+\left\|v\right\|_{L^{q}B^{s}_{r,2}}^\alpha\right)\left\|u-v\right\|_{L^{q}B^{s}_{r,2}},\notag
	\end{eqnarray}
	which yields (\ref{2235}).
	
	To prove Lemma \ref{l5}, it sufficies to provide $1\le \overline{q},\overline{r}\le2$ with $\frac{4}{\overline{q}}-N\left(\frac{1}{2}-\frac{1}{\overline{r}}\right)=4-s$ and  an admissible pair $(q,r)\in\Lambda_b$ that satisfy (\ref{e1})--(\ref{e3}). We consider two cases.

	\textbf{Case 1:} $N+2s\ge4$.  Let $\overline{q}=2$, $\overline{r}=\frac{2N}{N+4-2s}$, then we have $1\le \overline{q}$, $\overline{r}\le2$ and $\frac{4}{\overline{q}}-N\left(\frac{1}{2}-\frac{1}{\overline{r}}\right)=4-s$. Next, we choose the admissible pair $(q,r)\in\Lambda_b$ that satisfy (\ref{e1})--(\ref{e3}). We consider two subcases.  \\
	\textbf{Subcase 1:} $\left(N-2s\right)\alpha <4-2b$.
	Let
	\begin{equation}
		q=\infty ,\qquad r=2.\notag
	\end{equation}
	Then it is easy to verify that $(q,r)\in\Lambda_b$ and  (\ref{e1})--(\ref{e3}) hold. \\
	\textbf{Subcase 2:} $\left(N-2s\right)\alpha \ge 4-2b$. Let
	$$
	q=\frac{8\left(\alpha +1\right)}{\left(N-2s\right)\alpha -4+2b+2\ep},\qquad  r=\frac{2N\left(\alpha +1\right)}{N+4-2b-2\ep+2s\alpha},\notag
	$$
	where $\ep>0$ sufficiently small such that
	\begin{equation}
		\ep<\min \left\{\frac{8-2b-\left(N-2s\right)\alpha}{2} ,\ 2\alpha , \  \frac{N+4-2b-2s}{2}\right\}. \notag
	\end{equation}
	Then we have $(q,r)\in\Lambda_b$ and
	\begin{equation}
		\begin{cases}
			\frac{1}{\overline{q}}-\frac{\alpha +1}{q}=\frac{8-2b-2\ep-(N-2s)\alpha }{8}>0,\\
			\frac{N}{s}-r=\frac{N\left(N+4-2b-2s-2\ep\right)}{s\left(N+4-2b-2\ep+2s\alpha \right)}>0,\\
			\frac{1}{\overline{r}}-(\alpha +1)\left(\frac{1}{r}-\frac{s}{N}\right)=\frac{b+\ep}{N}>\frac{b}{N}.
		\end{cases}\notag
	\end{equation}
	Hence we see that (\ref{e1})--(\ref{e3}) hold.
	
	\textbf{Case 2:} $N+2s<4$. Let $\overline{q}=\frac{4}{4-s}$, $\overline{r}=2$, then we have $1\le \overline{q},\overline{r}\le2$ and $\frac{4}{\overline{q}}-N\left(\frac{1}{2}-\frac{1}{\overline{r}}\right)=4-s$. Next, we choose the admissible pair $(q,r)\in\Lambda_b$ that satisfy (\ref{e1})--(\ref{e3}). We consider two subcases.\\
	\textbf{Subcase 1:} $\left(N-2s\right)\alpha <2s-2b$.
	Let
	\begin{equation}
		q=\infty ,\qquad r=2.\notag
	\end{equation}
	Then it is easy to verify that $(q,r)\in\Lambda_b$ and  (\ref{e1})--(\ref{e3}) hold. \\
	\textbf{Subcase 2:} $\left(N-2s\right)\alpha \ge 2s-2b$. Let
	\begin{equation}
		q=\frac{8\left(\alpha +1\right)}{\left(N-2s\right)\alpha -2s+2b+2\ep},\qquad  r=\frac{2N\left(\alpha +1\right)}{N-2b-2\ep+2s(\alpha+1)},\notag
	\end{equation}
	where $\ep>0$ sufficiently small such that
	\begin{equation}
		2\ep<\min \left\{8-2b-\left(N-2s\right)\alpha,\ N-2b\right\}. \notag
	\end{equation}
	Then we have $(q,r)\in\Lambda_b$ and
	\begin{equation}
		\begin{cases}
			\frac{1}{\overline{q}}-\frac{\alpha +1}{q}=\frac{8-2b-2\ep-(N-2s)\alpha }{8}>0,\\
			\frac{N}{s}-r=\frac{N(N-2b-2\ep)}{s(N-2b-2\ep+2s(\alpha +1))}>0,\\
			\frac{1}{\overline{r}}-(\alpha +1)\left(\frac{1}{r}-\frac{s}{N}\right)=\frac{b+\ep}{N}>\frac{b}{N}.
		\end{cases}\notag
	\end{equation}
	Hence  we have that (\ref{e1})--(\ref{e3}) hold.
\end{proof}
\begin{lemma}\label{l6}
	Let $N\ge1$, $\beta>\max \left\{2,\frac{N}{4}\right\} $, $0<s\le2$, $s<\frac{N}{2}$,  $0<\alpha$,  $\left(N-2s\right)\alpha <8-\frac{2N}{\beta}$, $f\in\mathcal{C}(\alpha )$, and $K(x)\in L^\infty (\R^N)$.  There exist $\sigma>0$, $1\le \overline{q},\overline{r}\le2$ with $\frac{4}{\overline{q}}-N\left(\frac{1}{2}-\frac{1}{\overline{r}}\right)=4-s$ such that
	\begin{equation}\label{2236}
		\left\|K\chi_T\left(f(u)-f(v)\right)\right\|_{L^{\overline{q}}L^{\overline{r}}} \lesssim  T^{\sigma}\left(\left\|u\right\|_{\mathcal{X}^s}^\alpha+\left\|v\right\|_{\mathcal{X}^s}^\alpha\right)\left\|u-v\right\|_{\mathcal{X}^s}.
	\end{equation}
\end{lemma}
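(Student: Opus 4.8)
The plan is to follow the template of the proof of Lemma~\ref{l5}, now considerably simplified by the hypothesis $K\in L^\infty(\R^N)$: the factor $K$ can simply be absorbed by $\|K\|_{L^\infty}$, which in effect amounts to running the argument of Lemma~\ref{l5} with the parameter $b=\frac N\beta$ there replaced by $0$. First I would reduce (\ref{2236}) to an exponent-counting problem. Bounding $|K(x)|\le\|K\|_{L^\infty}$ pointwise and using (\ref{fu}), one has $|K(x)\chi_T(t)(f(u)-f(v))|\lesssim|\chi_T(t)|\,(|u|^\alpha+|v|^\alpha)|u-v|$. A threefold H\"older inequality in $x$ then gives, for a biharmonic admissible pair $(q,r)\in\Lambda_b$,
\[
\|K\chi_T(f(u)-f(v))\|_{L^{\overline r}_x}\lesssim|\chi_T(t)|\,\bigl(\|u\|_{L^a_x}^\alpha+\|v\|_{L^a_x}^\alpha\bigr)\|u-v\|_{L^a_x},\qquad a:=(\alpha+1)\overline r.
\]
A further H\"older inequality in $t$ with $\frac1{\overline q}=\frac1\sigma+\frac{\alpha+1}{q}$ pulls out $\|\chi_T\|_{L^\sigma_t}\lesssim T^{1/\sigma}$, and the Sobolev embedding $B^s_{r,2}(\R^N)\hookrightarrow L^a(\R^N)$ converts the $L^q_tL^a_x$ norms of $u,v,u-v$ into $L^q_tB^s_{r,2}$ norms, which are dominated by $\|\cdot\|_{\mathcal{X}^s}$. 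Hence it suffices to exhibit $1\le\overline q,\overline r\le2$ with $\frac4{\overline q}-N(\frac12-\frac1{\overline r})=4-s$ and $(q,r)\in\Lambda_b$ with $r<\frac Ns$ such that
\[
\frac1{\overline q}>\frac{\alpha+1}{q}\qquad\text{and}\qquad r\le(\alpha+1)\overline r\le\frac{Nr}{N-sr},
\]
the second chain being exactly what makes the embedding $B^s_{r,2}\hookrightarrow L^{(\alpha+1)\overline r}$ available; then (\ref{2236}) holds with $\sigma=\frac1{\overline q}-\frac{\alpha+1}{q}>0$.

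To produce such exponents I would argue by cases as in the proof of Lemma~\ref{l5} with $b=0$. The pair $(\overline q,\overline r)$ is chosen according to the sign of $N+2s-4$: roughly $\overline q=2$, $\overline r=\frac{2N}{N+4-2s}$ when $N+2s\ge4$ and $\overline q=\frac4{4-s}$, $\overline r=2$ when $N+2s<4$ (more generally $\overline r$ is taken in the admissible interval $[\max\{1,\frac{2N}{N+4-2s}\},2]$, which, together with the scaling relation, forces $1\le\overline q,\overline r\le2$ once $0<s\le2$). The pair $(q,r)$ is chosen according to the size of $(N-2s)\alpha$: when $(N-2s)\alpha$ lies below the relevant threshold one takes $q=\infty$, $r=2$, and when it lies above one takes the $\ep$-perturbed admissible pairs of Lemma~\ref{l5} with the parameter $b$ set to $0$, with $\ep>0$ small; for the latter one checks directly that the chain $r\le(\alpha+1)\overline r\le\frac{Nr}{N-sr}$ holds for small $\ep$. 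In every subcase one then verifies $(q,r)\in\Lambda_b$ --- i.e.\ $2\le\gamma,\rho\le\infty$, $\frac4\gamma+\frac N\rho=\frac N2$, and $(\gamma,\rho,N)\neq(2,\infty,4)$ --- as well as $r<\frac Ns$ and the two displayed conditions, by elementary computation, using $0<s\le2$, $s<\frac N2$, $\alpha>0$, and the subcriticality hypothesis $(N-2s)\alpha<8-\frac{2N}{\beta}<8$, which keeps $\sigma>0$ and leaves room for the $\ep$-perturbations.

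The step I expect to be the main obstacle is this last verification. In Lemma~\ref{l5} any surplus integrability could be deposited on the exponent $p\in(1,\beta)$ of $K\in L^p$; with $K$ merely bounded that slack disappears, so the chain $r\le(\alpha+1)\overline r\le\frac{Nr}{N-sr}$ has to be arranged directly --- in particular, when $r=2$ one must ensure $(\alpha+1)\overline r\ge2$, which is exactly why $\overline r$ is allowed to range over an interval rather than being fixed, and this may require splitting the ``$(N-2s)\alpha$ small'' regime a little differently than in Lemma~\ref{l5}. This remains a finite, elementary (if somewhat tedious) computation; because $K$ is only bounded, each individual estimate is no harder than its $L^1\cap L^\beta$ counterpart, so no new idea beyond Lemma~\ref{l5} is needed and the stated parameter range suffices.
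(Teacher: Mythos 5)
Your reduction is exactly the paper's: after bounding $K$ by $\left\|K\right\|_{L^\infty}$, applying (\ref{fu}) and H\"older in $x$ and $t$, and invoking the embedding $B^s_{r,2}(\R^N)\hookrightarrow L^{(\alpha+1)\overline r}(\R^N)$, you arrive at precisely the paper's sufficient conditions (\ref{f1})--(\ref{f3}) with $\sigma=\frac1{\overline q}-\frac{\alpha+1}{q}$, and the remaining task is the same finite exponent search. The obstacle you flag is real and your remedy is what the paper actually does: the cases are split by the size of $(N-2s)\alpha$ (against $2s$ and $4\alpha+4+2s$) rather than by the sign of $N+2s-4$, with $\overline q=\frac{4}{4-s}$, $\overline r=2$ in the first two regimes so that $(\alpha+1)\overline r\ge 2$ holds for every $\alpha>0$, and a perturbed pair $(\overline q,\overline r)$ only in the large-$(N-2s)\alpha$ regime.
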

\begin{proof}
	We first claim that if $1\le \overline{q},\overline{r}\le2$ with $\frac{4}{\overline{q}}-N\left(\frac{1}{2}-\frac{1}{\overline{r}}\right)=4-s$ and  $\left(q,r\right)\in \Lambda _b$ is an admissible pair that satisfy
	\begin{numcases}
		{\ }\frac{1}{\overline{q}}-\frac{\alpha +1}{q}>0,\label{f1}\\
		r<\frac{N}{s},\label{f2}\\
		\frac{\alpha +1}{r}>\frac{1}{\overline{r}}>(\alpha +1)\left(\frac{1}{r}-\frac{s}{N}\right),\label{f3}
	\end{numcases}
	then the inequality (\ref{2236}) holds with $\sigma=\frac{1}{\overline{q}}-\frac{\alpha +1}{q}$. In fact, let $1<p<\infty $ be given by $\frac{1}{\overline{r}}=\frac{\alpha +1}{p}$, then by (\ref{f3}), we have $\frac{1}{r}-\frac{s}{N}<\frac{1}{p}<\frac{1}{r}$; so that  the embedding $B^s_{r,2}(\R^N)\hookrightarrow L^{p}(\R^N)$ holds. Using (\ref{fu}), the boundedness of $K(x)$, H\"older's inequality and Sobolev's embedding $B^s_{r,2}\left(\R^N\right) \hookrightarrow  L^{\frac{Nr}{N-sr}\left(\R^N\right)}$, we have
	\begin{eqnarray}
		&&\left\|K(x)\chi_T(t)\left(f(u)-f(v)\right)\right\|_{L^{\overline{q}}L^{\overline{r}}}\notag\\
		&\lesssim &  \left\|\left\|K\right\|_{L^p}\left(\left\|u\right\|_{L^{\frac{Nr}{N-sr}}}^\alpha +\left\|v\right\|_{L^{\frac{Nr}{N-sr}}}^\alpha \right)\left\|u-v\right\|_{L^{\frac{Nr}{N-sr}}}\right\|_{L^{\overline{q}}}\notag\\
		&\lesssim &  T^{\frac{1}{\overline{q}}-\frac{\alpha +1}{q}}\left(\left\|u\right\|_{L^{q}B^{s}_{r,2}}^\alpha+\left\|v\right\|_{L^{q}B^{s}_{r,2}}^\alpha\right)\left\|u-v\right\|_{L^{q}B^{s}_{r,2}},\notag
	\end{eqnarray}
	which yields (\ref{2236}).
	
	To prove Lemma \ref{l6}, it suffices to provide $1\le \overline{q},\overline{r}\le2$ with $\frac{4}{\overline{q}}-N\left(\frac{1}{2}-\frac{1}{\overline{r}}\right)=4-s$ and an  $(q,r)\in\Lambda_b$ that satisfy (\ref{f1})--(\ref{f3}). We consider three cases.
	
	\textbf{Case 1:} $(N-2s)\alpha <2s$.  Let
	\begin{equation}
		\begin{cases}
			\overline{q}=\frac{4}{4-s},\qquad &\overline{r}=2,\\
			q=\infty ,\qquad &r=2.
		\end{cases}\notag
	\end{equation}
	Then it is easy to verify that $1\le \overline{q},\overline{r}\le2$ with $\frac{4}{\overline{q}}-N\left(\frac{1}{2}-\frac{1}{\overline{r}}\right)=4-s$ and an admissible pair $(q,r)\in\Lambda_b$ that satisfies (\ref{f1})--(\ref{f3}).

	\textbf{Case 2:} $2s\le (N-2s)\alpha < 4\alpha +4+2s$.
	Let
	\begin{equation}
		\begin{cases}
			\overline{q}=\frac{4}{4-s},\qquad &\overline{r}=2,\\
			q=\frac{8\left(\alpha +1\right)}{\left(N-2s\right)\alpha -2s+2\ep},\qquad  &r=\frac{2N\left(\alpha +1\right)}{N-2\ep+2s(\alpha+1)},\notag
		\end{cases}
	\end{equation}
	where $\ep>0$ sufficiently small such that
	\begin{equation}
		\ep<\min \left\{\frac{8-\left(N-2s\right)\alpha}{2} ,\  s(\alpha +1),\  \frac{N}{2},\ \frac{4\alpha +4-2s-(N-2s)\alpha }{2} \right\}. \notag
	\end{equation}
	By direct calculation, we have $1\le \overline{q},\overline{r}\le2$ with $\frac{4}{\overline{q}}-N\left(\frac{1}{2}-\frac{1}{\overline{r}}\right)=4-s$, $(q,r)\in\Lambda_b$
	and
	\begin{equation}
		\begin{cases}
			\frac{1}{\overline{q}}-\frac{\alpha +1}{q}=\frac{8-(N-2s)\alpha -2\ep}{8}>0,\\
			\frac{N}{s}-r=\frac{N(N-2\ep)}{s(N-2\ep+2s(\alpha +1))}>0,\\
			\frac{\alpha +1}{r}-\frac{1}{\overline{r}}=\frac{s(\alpha +1)-\ep}{N}>0,\\
			\frac{1}{\overline{r}}-(\alpha +1)\left(\frac{1}{r}-\frac{s}{N}\right)=\frac{\ep}{N}>0,
		\end{cases}\notag
	\end{equation}
	which imply  (\ref{f1})--(\ref{f3}).
	
	\textbf{Case 3:} $(N-2s)\alpha >4\alpha +4+2s$.
	Let
	\begin{equation}
		\begin{cases}
			\overline{q}=\frac{8(\alpha +2)}{(8-(N-2s))\alpha +16-2\ep},\qquad &\overline{r}=\frac{N(\alpha +2)}{(N-2s)\alpha +N-2s+\ep},\\
			q=\frac{8(\alpha +2)}{(N-2s)\alpha +2\ep},\qquad &r=\frac{N(\alpha +2)}{N+s\alpha -\ep},\notag
		\end{cases}
	\end{equation}
	where $\ep>0$ sufficiently small  such that
	\begin{equation}
		\ep<\min \left\{b,\ s,\ N-2s, \ \frac{8_(N-2s)\alpha }{2}\right\}.\notag
	\end{equation}
	By direct calculation, we have $1\le \overline{q},\overline{r}\le2$ with $\frac{4}{\overline{q}}-N\left(\frac{1}{2}-\frac{1}{\overline{r}}\right)=4-s$, $(q,r)\in\Lambda_b$
	and
	\begin{equation}
		\begin{cases}
			\frac{1}{\overline{q}}-\frac{\alpha +1}{q}=\frac{\left(8-2\ep-(N-2s)\alpha \right)(\alpha +2)}{8(\alpha +2)}>0,\\
			\frac{N}{s}-r=\frac{N(N-2s-\ep)}{s(N+s\alpha +\ep)}>0,\\
			\frac{\alpha +1}{r}-\frac{1}{\overline{r}}=\frac{(\alpha +2)s\alpha +s+\ep}{N(\alpha +2)}>0,\\
			\frac{1}{\overline{r}}-(\alpha +1)\left(\frac{1}{r}-\frac{s}{N}\right)=\frac{\ep \alpha }{N(\alpha +2)}>0.
		\end{cases}\notag
	\end{equation}
	Hence we see that (\ref{f1})--(\ref{f3}) hold.
\end{proof}
\begin{lemma}\label{l7}
	Let $N\ge1$, $\beta>\max \left\{2,\frac{N}{4}\right\} $, $0<s\le2$, $s<\frac{N}{2}$,  $0<\alpha$,  $\left(N-2s\right)\alpha <8-\frac{2N}{\beta}$, $f\in\mathcal{C}(\alpha )$, and $K(x)\in L^1(\R^N)\cap L^\beta(\R^N)$.   There exist $\sigma>0$, $(\gamma,\rho)\in\Lambda_b $ such that
	\begin{equation}\label{12236}
		\left\|K\chi_T\left(f(u)-f(v)\right)\right\|_{L^{\gamma'}L^{\rho'}}\lesssim T^{\sigma}\left(\left\|u\right\|_{\mathcal{X}^s}^\alpha+\left\|v\right\|_{\mathcal{X}^s}^\alpha\right)\sup_{(q,r)\in\Lambda_b}\left\|u-v\right\|_{L^qL^r}.
	\end{equation}
\end{lemma}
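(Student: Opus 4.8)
The plan is to argue exactly as in Lemmas \ref{l1}--\ref{l6}: reduce \eqref{12236} to the existence of three biharmonic admissible pairs obeying a short list of numerical constraints, check that these constraints are sufficient by combining \eqref{fu} with H\"older's inequality and the Sobolev embedding $B^s_{r,2}(\R^N)\hookrightarrow L^{\frac{Nr}{N-sr}}(\R^N)$, and then exhibit the pairs through a case analysis in $\alpha$, $N$, $s$. Throughout set $b=\frac N\beta$, so that $0<b<\min\{\frac N2,4\}$ and $(N-2s)\alpha<8-2b$. The claim I would establish is: whenever $(\gamma,\rho)$, $(q,r)$, $(q_1,\rho_1)\in\Lambda_b$ satisfy
\begin{gather*}
1-\frac1\gamma-\frac\alpha q-\frac1{q_1}>0,\qquad r<\frac Ns,\\
1-\frac1\rho-\alpha\left(\frac1r-\frac sN\right)-\frac1{\rho_1}>\frac bN,
\end{gather*}
the estimate \eqref{12236} holds with $\sigma=1-\frac1\gamma-\frac\alpha q-\frac1{q_1}$, the supremum on its right-hand side being realized at the pair $(q_1,\rho_1)$.

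To prove the claim, introduce $p$ and $l$ through $1-\frac1\rho=\frac1p+\alpha(\frac1r-\frac sN)+\frac1{\rho_1}$ and $1-\frac1\gamma=\frac1l+\frac\alpha q+\frac1{q_1}$. The third displayed inequality gives $1<p<\frac Nb=\beta$, hence $K\in L^1(\R^N)\cap L^\beta(\R^N)\subset L^p(\R^N)$, while the first gives $1\le l<\infty$; moreover one checks $\alpha(\frac1r-\frac sN)\in(0,1)$ automatically, so the H\"older splitting below is legitimate. Then \eqref{fu}, H\"older's inequality in $x$ (with $\frac1{\rho'}=\frac1p+\alpha(\frac1r-\frac sN)+\frac1{\rho_1}$) and in $t$ (with $\frac1{\gamma'}=\frac1l+\frac\alpha q+\frac1{q_1}$), the embedding $B^s_{r,2}\hookrightarrow L^{\frac{Nr}{N-sr}}$, the bound $\|\chi_T\|_{L^l(\R)}\lesssim T^{1/l}=T^\sigma$, and the trivial inequalities $\|u\|_{L^qB^s_{r,2}}\le\|u\|_{\mathcal X^s}$ and $\|u-v\|_{L^{q_1}L^{\rho_1}}\le\sup_{(q,r)\in\Lambda_b}\|u-v\|_{L^qL^r}$ yield \eqref{12236}.

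It then remains to exhibit, for every admissible range of the parameters, a triple in $\Lambda_b$ satisfying the three inequalities above; this is the only place where real work is needed. The situation is more comfortable here than in Lemmas \ref{l1}--\ref{l6}, because the difference $u-v$ enters through a plain Strichartz norm $L^{q_1}L^{\rho_1}$ with no $s$-derivative on it, so the free pair $(q_1,\rho_1)$ can be used to absorb the leftover budget of the time and space exponents and may frequently be taken to be an endpoint such as $(\infty,2)$ or, for $N\ge5$, $\left(2,\frac{2N}{N-4}\right)$. I would split on the size of $(N-2s)\alpha$ relative to the thresholds $2s-2b$ and $4+2s-2b$, and on whether $N\ge5$ or $N\le4$, in each case taking $(\gamma,\rho)$ and $(q,r)$ essentially as in the corresponding case of Lemmas \ref{l1} and \ref{l5} (perturbed by a small $\ep>0$) and then fixing $(q_1,\rho_1)$ to close the remaining gap. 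The main obstacle is the resulting bookkeeping: after distributing $\frac1{\gamma'}$ and $\frac1{\rho'}$ among the factors and inserting the $\ep$-perturbations, one must verify simultaneously that each of the three pairs still satisfies both the scaling identity $\frac4\gamma+\frac N\rho=\frac N2$ and the range constraints defining $\Lambda_b$, and that all three strict inequalities hold with one uniform $\sigma>0$. This is precisely the elementary-but-lengthy type of check already carried out in the previous lemmas, and I expect it to go through under the hypothesis $(N-2s)\alpha<8-2b$.
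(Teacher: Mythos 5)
Your reduction is exactly the paper's: the three numerical conditions you isolate on the triple $(\gamma,\rho)$, $(q,r)$, $(q_1,\rho_1)$ are verbatim the conditions (\ref{c1})--(\ref{c3}) used in the proof of Lemma \ref{l3}, and the H\"older/Sobolev/$\|\chi_T\|_{L^l}\lesssim T^{1/l}$ chain with $\sigma=1-\frac1\gamma-\frac\alpha q-\frac1{q_1}$ is the same computation. The only difference is that the case analysis you defer (and merely expect to go through) is unnecessary: the paper simply observes that a triple satisfying these conditions was already exhibited in the two cases of Lemma \ref{l3}, so the existence step is a citation rather than new work.
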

\begin{proof}
	Let $b=\frac{N}{\beta}$, then we have $0<b<\min \left\{\frac{N}{2},4\right\} $ and $(N-2s)\alpha <8-2b$.	From the proof of Lemma \ref{l3}, we know there exist three admissible pairs $(\gamma,\rho),$ $(q,r),(m,n)\in\Lambda_b$ such that
	\begin{numcases}
		{\ }	1-\frac{1}{\gamma}-\frac{\alpha }{q}-\frac{1}{m}>0,\notag\\
		r<\frac{N}{s},\notag\\
		1-\frac{1}{\rho}-\alpha \left(\frac{1}{r}-\frac{s}{N}\right)-\frac{1}{n}>\frac{b}{N}.\notag
	\end{numcases}
	Put $\frac{1}{p}=1-\frac{1}{\rho}-\alpha \left(\frac{1}{r}-\frac{s}{N}\right)-\frac{1}{n}$, then we have $1<p<\frac{N}{b}$ and thus $K(x)\in L^p(\R^N)$. Using (\ref{fu}), H\"older's inequality and Sobolev's embedding $B^s_{r,2}(\R^N)\hookrightarrow L^{\frac{Nr}{N-sr}}(\R^N)$, we have
	\begin{eqnarray}
		&&\left\|K\chi_T\left(f(u)-f(v)\right)\right\|_{L^{\gamma'}L^{\rho'}}\notag\\
		&\lesssim& T^{1-\frac{1}{\gamma}-\frac{\alpha }{q}-\frac{1}{m}} \left\|K\right\|_{L^p}\left(\left\|u\right\|_{L^qL^{\frac{Nr}{N-sr}}}^\alpha +\left\|v\right\|_{L^qL^{\frac{Nr}{N-sr}}}^\alpha \right)\left\|u-v\right\|_{L^mL^n}\notag\\
		&\lesssim &  T^{1-\frac{1}{\gamma}-\frac{\alpha }{q}-\frac{1}{m}} \left(\left\|u\right\|_{L^qB^s_{r,2}}^\alpha +\left\|v\right\|_{L^qB^s_{r,2}}^\alpha \right)\left\|u-v\right\|_{L^mL^n},\notag
	\end{eqnarray}
	which yields (\ref{12236}).
\end{proof}

\begin{lemma}\label{l8}
	Let $N\ge1$, $\beta>\max \left\{2,\frac{N}{4}\right\} $, $0<s\le2$, $s<\frac{N}{2}$,  $0<\alpha$,  $\left(N-2s\right)\alpha <8-\frac{2N}{\beta}$, $f\in\mathcal{C}(\alpha )$, and $K(x)\in L^\infty (\R^N)$.   There exist $\sigma>0$, $(\gamma,\rho)\in\Lambda_b $ such that
	\begin{equation}\label{2237}
		\left\|K\chi_T\left(f(u)-f(v)\right)\right\|_{L^{\gamma'}L^{\rho'}}\lesssim T^{\sigma}\left(\left\|u\right\|_{\mathcal{X}^s}^\alpha+\left\|v\right\|_{\mathcal{X}^s}^\alpha\right)\sup_{(q,r)\in\Lambda_b}\left\|u-v\right\|_{L^qL^r}.
	\end{equation}
\end{lemma}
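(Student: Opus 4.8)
The plan is to prove Lemma \ref{l8} by imitating the second half of the proof of Lemma \ref{l7}, with the single change that the boundedness of $K(x)$ replaces the use of an auxiliary $L^p$-space for $K$. Since $K\in L^\infty(\R^N)$, the hypothesis $(N-2s)\alpha<8-\frac{2N}{\beta}$ is only needed in the weaker form $(N-2s)\alpha<8$, so there is extra room and no parameter $b$ enters. I would fix the admissible pair and the exponent $\sigma$ exactly as produced in the proof of Lemma \ref{l4}, i.e. take $(\gamma,\rho)=(q,r)$ with
\[
q=\frac{8(\alpha+2)}{(N-2s)\alpha+2\ep},\qquad r=\frac{N(\alpha+2)}{N+\alpha s-\ep},
\]
for $\ep>0$ small, recalling from there that $(q,r)\in\Lambda_b$, that $r<\min\{N/s,4^*\}$, that $\sigma:=1-\frac{\alpha+2}{q}>0$, and that the exponent $p$ given by $1=\frac2r+\frac{\alpha}{p}$ obeys $\frac1r-\frac{s}{N}<\frac1p<\frac1r$, so that $B^s_{r,2}(\R^N)\hookrightarrow L^p(\R^N)$.

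Next I would estimate pointwise by (\ref{fu}), namely $|f(u)-f(v)|\lesssim(|u|^\alpha+|v|^\alpha)|u-v|$, and apply H\"older's inequality in space and then in time. In the spatial variable I use $K\in L^\infty$, place $|u|^\alpha+|v|^\alpha$ in $L^{p/\alpha}$ and $u-v$ in $L^r$, which is consistent because $\frac{\alpha}{p}+\frac1r=\frac1{\rho'}$. In the time variable I place $\chi_T$ in $L^\sigma$, extracting $\|\chi_T\|_{L^\sigma}\lesssim T^{1/\sigma}$, place $\|u(\cdot)\|_{L^p}^\alpha+\|v(\cdot)\|_{L^p}^\alpha$ in $L^{q/\alpha}$ and $\|u(\cdot)-v(\cdot)\|_{L^r}$ in $L^q$, which is consistent because $\frac1\sigma+\frac{\alpha+1}{q}=\frac1{\gamma'}$. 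This yields
\[
\|K\chi_T(f(u)-f(v))\|_{L^{\gamma'}L^{\rho'}}\lesssim T^{1/\sigma}\bigl(\|u\|_{L^qL^p}^\alpha+\|v\|_{L^qL^p}^\alpha\bigr)\|u-v\|_{L^qL^r}.
\]
Applying $B^s_{r,2}\hookrightarrow L^p$ to the factors carrying $u$ and $v$, then using $\|u\|_{L^qB^s_{r,2}}\le\|u\|_{\mathcal X^s}$, $\|v\|_{L^qB^s_{r,2}}\le\|v\|_{\mathcal X^s}$ and $\|u-v\|_{L^qL^r}\le\sup_{(q,r)\in\Lambda_b}\|u-v\|_{L^qL^r}$, I obtain (\ref{2237}) with the stated $\sigma$.

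The only step requiring genuine care, and hence the "main obstacle", is the exponent bookkeeping: one must check simultaneously that $\gamma,\rho,q,r\in[2,\infty]$ with $(\gamma,\rho,N)\neq(2,\infty,4)$, that the H\"older exponents $p/\alpha$, $q/\alpha$, $\sigma$ and all duals lie in $[1,\infty]$, that $\sigma>0$, and that the Besov--Lebesgue embedding is valid. All of these reduce to $(N-2s)\alpha<8$ and $2s<N$ together with the freedom to choose $\ep>0$ arbitrarily small, and they were already verified verbatim in the proof of Lemma \ref{l4}; I do not anticipate any new difficulty. Note moreover that the quantity $K\chi_T(f(u)-f(v))$ handled here is strictly simpler than the translation-difference term of Lemma \ref{l4}, since no $1_{\alpha<1}$ contribution and no uniformly-local norm $L^{\widetilde q}_{\text{uloc},T}L^{\frac{Nr}{N-sr}}$ arise; thus one may if desired simply quote the pair and the value of $\sigma$ from that proof and run the above two applications of H\"older's inequality.
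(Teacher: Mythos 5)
Your argument is correct and is essentially identical to the paper's proof: the paper likewise reuses the admissible pair $(q,r)$ constructed in the proof of Lemma \ref{l4}, takes $(\gamma,\rho)=(q,r)$, and applies (\ref{fu}), the boundedness of $K$, H\"older's inequality in space and time, and the embedding $B^s_{r,2}\hookrightarrow L^p$ with $1=\frac{2}{r}+\frac{\alpha}{p}$, arriving at the bound $T^{1-\frac{\alpha+2}{q}}\left(\|u\|_{L^qB^s_{r,2}}^\alpha+\|v\|_{L^qB^s_{r,2}}^\alpha\right)\|u-v\|_{L^qL^r}$. The only blemish is a notational clash: you define $\sigma=1-\frac{\alpha+2}{q}$ but then use the same symbol $\sigma$ for the time H\"older exponent (its reciprocal), so your final power $T^{1/\sigma}$ should read $T^{\sigma}=T^{1-\frac{\alpha+2}{q}}$, which is the positive exponent the lemma requires.
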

\begin{proof}
	From the proof of Lemma \ref{l4}, we know there exists $(q,r)\in \Lambda_b$ such that
	\begin{numcases}
		{\ }1-\frac{\alpha +2}{q}>0,\notag\\
		r<\frac{N}{s},\notag\\
		\frac{\alpha }{r}+\frac{1}{r}>1-\frac{1}{r}>\alpha \left(\frac{1}{r}-\frac{s}{N}\right)+\frac{1}{r}.\notag
	\end{numcases}
	Let $p$ be given by $1-\frac{1}{r}=\frac{\alpha }{p}+\frac{1}{r}$, then we have $\frac{1}{r}-\frac{s}{N}<\frac{1}{p}<\frac{1}{r}$ and thus the embedding $B^s_{r,2}(\R^N)\hookrightarrow L^p(\R^N)$ holds. Using (\ref{fu}), the boundedness of $K(x)$, H\"older's inequality and Sobolev's embedding $B^s_{r,2}(\R^N)\hookrightarrow L^{p}(\R^N)$, we have
	$$
	\left\|K\chi_T\left(f(u)-f(v)\right)\right\|_{L^{q'}L^{r'}}
	\lesssim   T^{1-\frac{\alpha +2}{q}} \left(\left\|u\right\|_{L^qB^s_{r,2}}^\alpha +\left\|v\right\|_{L^qB^s_{r,2}}^\alpha \right)\left\|u-v\right\|_{L^qL^r},\notag
	$$
	which yields (\ref{2237}).
\end{proof}
In the case $s\ge \frac{N}{2}$, the embedding $H^s(\R^N)\hookrightarrow L^p(\R^N)$ holds for any $2\le p<\infty $; thus we can establish the following Lemmas.
\begin{lemma}\label{l9}
	Let $N\ge1$, $\beta>\max \left\{2,\frac{N}{4}\right\} $, $0<s\le2$, $s\ge\frac{N}{2}$,  $0<\alpha<\infty $, $f\in\mathcal{C}(\alpha )$, and $K(x)\in L^\infty (\R^N)+ L^\beta(\R^N)$.  There exist $\sigma>0$ and $2\le r,\rho<\infty $ such that
	\begin{align}\label{z1}
		&\left(\int_{-\infty }^{\infty }\left(\left|\tau\right|^{-s/4 }\left\|\left(\chi_T(t-\tau)-\chi_T(\tau)\right)K(x)\left(f(u)-f(v)\right)\right\|_{L^1L^2}\right)^2 \frac{\mathrm{d}\tau}{\left|\tau\right|} \right)^{1/2}\notag\\
		&\lesssim   T^\sigma \left(\left\|u\right\|_{\mathcal{X}^s}^\alpha +\left\|v\right\|_{\mathcal{X}^s}^\alpha \right)\left\|u-v\right\|_{\mathcal{X}^s},
	\end{align}
	and
	\begin{align}\label{z2}
		&\left(\int_{-\infty }^{\infty }\left(\left|\tau\right|^{-s/4 }\left\|\chi_T(t-\tau)K(x)\left(\left(f(u)-f(v)\right)_\tau-\left(f(u)-f(v)\right)\right)\right\|_{L^1L^2}\right)^2 \frac{\mathrm{d}\tau}{\left|\tau\right|} \right)^{1/2}\notag\\
		&\lesssim   T^\sigma \left(\left\|u\right\|_{\mathcal{X}^s}^\alpha +\left\|v\right\|_{\mathcal{X}^s}^\alpha \right)\left\|u-v\right\|_{\mathcal{X}^s}+1_{\alpha <1}\left\|u-v\right\|_{L^\infty \left(\R,L^r\cap L^\rho\right)}^\alpha \left\|v\right\|_{\mathcal{X}^s}.
	\end{align}
\end{lemma}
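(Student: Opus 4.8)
The plan is to follow the same template as Lemmas \ref{l1}--\ref{l4}; the only structural simplification is that the hypothesis $s\ge N/2$ — which, together with $0<s\le2$, forces $N\le4$ — makes available the Sobolev embedding $H^s(\R^N)=B^s_{2,2}(\R^N)\hookrightarrow L^p(\R^N)$ for \emph{every} $2\le p<\infty$. This removes all dimensional restrictions and leaves so much room in the H\"older bookkeeping that one biharmonic admissible pair will suffice for each elementary estimate. Throughout I would fix a decomposition $K=K_\infty+K_\beta$ with $K_\infty\in L^\infty$, $K_\beta\in L^\beta$, treat the two pieces separately, and use $(\infty,2)\in\Lambda_b$ to get $\|w\|_{L^\infty_tH^s}\le\|w\|_{\mathcal{X}^s}$.

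To prove \eqref{z1} I would estimate the $L^1L^2$-norm pointwise in $\tau$. For the $K_\infty$ piece, \eqref{fu} and H\"older in $x$ give $\|K_\infty(f(u)-f(v))(t)\|_{L^2_x}\lesssim\big(\|u(t)\|_{L^{2(\alpha+1)}_x}^\alpha+\|v(t)\|_{L^{2(\alpha+1)}_x}^\alpha\big)\|(u-v)(t)\|_{L^{2(\alpha+1)}_x}$, and since $2(\alpha+1)<\infty$ each factor is controlled by its $H^s$-norm; by H\"older in $t$ (putting $\chi_T(t-\tau)-\chi_T(t)$ in $L^1_t$ and everything else in $L^\infty_t$),
\[
\|(\chi_T(t-\tau)-\chi_T(t))K_\infty(f(u)-f(v))\|_{L^1L^2}\le\|\chi_T(\cdot-\tau)-\chi_T(\cdot)\|_{L^1_t}\big(\|u\|_{\mathcal{X}^s}^\alpha+\|v\|_{\mathcal{X}^s}^\alpha\big)\|u-v\|_{\mathcal{X}^s}.
\]
Feeding this into the $\tau$-integral and invoking the difference characterization of the Besov norm recalled in Section \ref{s2}, the $\tau$-factor is $\|\chi_T\|_{B^{s/4}_{1,2}(\R)}\lesssim T^{1-s/4}$ (for $0<T\le1$, since $s\le2<4$), so \eqref{z1} follows with $\sigma=1-s/4$. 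The $K_\beta$ piece is identical with $L^{2(\alpha+1)}$ replaced by $L^{p(\alpha+1)}$, where $\tfrac1p=\tfrac12-\tfrac1\beta>0$ (this is where $\beta>2$ enters); $p(\alpha+1)$ is again finite and $\ge2$.

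For \eqref{z2} I would use the commutator identity \eqref{10105} with the bounds \eqref{10106} for $|A_1|$ and \eqref{10107} for $|A_2|$. For $A_1$: fix an admissible pair $(m,n)\in\Lambda_b$ with $2<n<\infty$ (so $2\le m<\infty$, automatic for $N\le4$); estimate $\|K_\bullet A_1(t)\|_{L^2_x}$ by H\"older in $x$ and the embeddings $H^s\hookrightarrow L^n$, $H^s\hookrightarrow L^{\alpha\theta}$ (with $\tfrac12=\tfrac1n+\tfrac1\theta$ for $K_\infty$, $\tfrac12-\tfrac1\beta=\tfrac1n+\tfrac1\theta$ for $K_\beta$), then H\"older in $t$ with $\chi_T\in L^{m'}_t$ and $u\in L^\infty_tH^s$; by translation invariance this gives, for each $\tau$, a bound $\lesssim T^{1-1/m}\|(u-v)_\tau-(u-v)\|_{L^mL^n}\|u\|_{\mathcal{X}^s}^\alpha$, and the $\tau$-integral becomes $T^{1-1/m}\|u\|_{\mathcal{X}^s}^\alpha$ times the $B^{s/4}_{m,2}(\R,L^n)$-seminorm of $u-v$, which is $\le\|u-v\|_{\mathcal{X}^s}$; this is the first term on the right of \eqref{z2}. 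For $A_2$ with $\alpha\ge1$ one argues the same way, splitting $(|u|+|u_\tau|+|v|+|v_\tau|)^{\alpha-1}$ among finite Lebesgue exponents, using translation invariance for all factors except $v_\tau-v$ (which yields $\|v\|_{B^{s/4}_{m,2}L^n}\le\|v\|_{\mathcal{X}^s}$), and then Young's inequality absorbs $\|v\|_{\mathcal{X}^s}(\|u\|_{\mathcal{X}^s}^{\alpha-1}+\|v\|_{\mathcal{X}^s}^{\alpha-1})$ into $\|u\|_{\mathcal{X}^s}^\alpha+\|v\|_{\mathcal{X}^s}^\alpha$. For $A_2$ with $0<\alpha<1$, the factor $(|u-v|^\alpha+|u_\tau-v_\tau|^\alpha)$ carries $\alpha$ extra powers of the difference which cannot be packed into a single power of $\|u-v\|_{\mathcal{X}^s}$; I would keep it as $\|u-v\|_{L^\infty(\R,L^{\alpha\theta})}^\alpha$ and the $v_\tau-v$ factor as $\|v\|_{B^{s/4}_{m,2}L^n}\le\|v\|_{\mathcal{X}^s}$, which produces exactly the term $1_{\alpha<1}\|u-v\|_{L^\infty(\R,L^r\cap L^\rho)}^\alpha\|v\|_{\mathcal{X}^s}$, with $r$ and $\rho$ the two values of $\alpha\theta$ coming from the $K_\infty$ and $K_\beta$ pieces.

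The step I expect to be the main obstacle is purely one of bookkeeping: one must choose the admissible pairs $(m_\infty,n_\infty)$, $(m_\beta,n_\beta)\in\Lambda_b$ (hence the exponents $r=\alpha\theta_\infty$ and $\rho=\alpha\theta_\beta$) so that simultaneously all H\"older identities in $x$ are solvable with finite exponents, every exponent entering a Sobolev embedding lies in $[2,\infty)$, and, in the case $0<\alpha<1$, $r,\rho\in[2,\infty)$. The last requirement forces $n_\infty(1-\alpha)\le2$ and $\tfrac1{n_\beta}\in\big[\tfrac{1-\alpha}{2}-\tfrac1\beta,\ \tfrac12-\tfrac1\beta\big)$; these ranges are nonempty precisely because $\alpha>0$ and $\beta>2$, and for $N\le4$ any $n_\bullet\in(2,\infty)$ gives an admissible pair with $m_\bullet\in[2,\infty)$. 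Checking these finitely many strict inequalities is the same ``direct calculation'' done throughout Lemmas \ref{l1}--\ref{l6}; once this is done, \eqref{z1} and \eqref{z2} follow, and one takes $\sigma$ to be the minimum of the positive exponents obtained above.
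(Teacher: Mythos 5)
Your proposal is correct and follows essentially the same route as the paper's proof: decompose $K$ into a bounded part and an integrable part, exploit $H^s(\R^N)\hookrightarrow L^p(\R^N)$ for all finite $p\ge2$ (available since $s\ge N/2$), apply the identity \eqref{10105} with the bounds \eqref{10106}--\eqref{10107}, H\"older in $x$ and $t$, the difference characterization of $B^{s/4}_{\cdot,2}$ for the $\chi_T$ and $u-v$ factors, and Young's inequality, with $r,\rho$ arising as the two auxiliary exponents from the two pieces of $K$. The only cosmetic difference is that the paper writes down one explicit $\ep$-parametrized family of admissible pairs (placing $K_2\in L^1\cap L^\beta\subset L^{2+\ep}$) where you instead verify solvability of the exponent constraints using $K_\beta\in L^\beta$ directly.
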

\begin{proof}
	Since $K(x)\in L^\infty (\R^N)+ L^\beta(\R^N)\subset L^\infty (\R^N)+L^1(\R^N)\cap L^\beta (\R^N)$, we can decompose $K(x)=K_1(x)+K_2(x)$, with $K_1(x)\in L^\infty (\R^N)$, $K_2(x)\in L^1(\R^N)\cap L^\beta(\R^N)$. Put $(q,r)$, $(\gamma,\rho)\in\Lambda_b$,
	\begin{equation}
		\begin{cases}
			q=\frac{8(\alpha +1)(2+\ep)}{N(2\alpha +2+\ep \alpha )},\qquad &r=\frac{2(2+\ep)(\alpha +1)}{\ep},\\
			\gamma=\frac{8(\alpha +1)}{N\alpha },\qquad &\rho=2\alpha +2,
		\end{cases}\notag
	\end{equation}
	where $0<\ep<\beta-2$; so that $K_2(x)\in L^1(\R^N)\cap L^\beta(\R^N)\subset L^{2+\ep}(\R^N)$.
	
	We first prove (\ref{z2}). 	From (\ref{10105}),  (\ref{10106}) and (\ref{10107}), the boundedness of $\chi_T$ and H\"older's inequality, we get
	\begin{eqnarray}
		&&\left\|\chi_T(t-\tau)K_2(x)\left(\left(f(u)-f(v)\right)_\tau-\left(f(u)-f(v)\right)\right)\right\|_{L^1L^2}\notag\\
		&\lesssim&  T^{1-\frac{1}{q}}\|(u-v)_\tau-(u-v)\|_{L^q L^r}\left\|K_2(x)\right\|_{L^{2+\ep}}\left(\left\|u\right\|_{L^\infty L^{r}}^\alpha+\left\|v\right\|_{L^\infty L^{r}}^\alpha \right)\notag\\
		&&+T^{1-\frac{1}{q}}\begin{cases}\|v_\tau-v\|_{L^q L^r}\left(\left\|u\right\|_{L^\infty L^{r}}^{\alpha -1}+\left\|v\right\|_{L^\infty L^{r}}^{\alpha -1}\right)\left\|u-v\right\|_{L^qL^{r}}, \ \alpha\ge1,\\
			\|v_\tau-v\|_{L^q L^r}\|u-v\|_{L^{\infty } L^{r}}^\alpha,\ 0<\alpha<1.
		\end{cases}\notag
	\end{eqnarray}
	This inequality together with young's inequality yields
	\begin{align}\label{z21}
		&\left(\int_{-\infty }^{\infty }\left(\left|\tau\right|^{-s/4 }\left\|\chi_T(t-\tau)K_2(x)\left(\left(f(u)-f(v)\right)_\tau-\left(f(u)-f(v)\right)\right)\right\|_{L^1L^2}\right)^2 \frac{\mathrm{d}\tau}{\left|\tau\right|} \right)^{1/2}\notag\\
		&\lesssim   T^{1-\frac{1}{q}} \left(\left\|u\right\|_{\mathcal{X}^s}^\alpha +\left\|v\right\|_{\mathcal{X}^s}^\alpha \right)\left\|u-v\right\|_{\mathcal{X}^s}+1_{\alpha <1}\left\|u-v\right\|_{L^\infty L^r}^\alpha \left\|v\right\|_{\mathcal{X}^s}.
	\end{align}
	Similarly, we have
	\begin{align}\label{z22}
		&\left(\int_{-\infty }^{\infty }\left(\left|\tau\right|^{-s/4 }\left\|\chi_T(t-\tau)K_1(x)\left(\left(f(u)-f(v)\right)_\tau-\left(f(u)-f(v)\right)\right)\right\|_{L^1L^2}\right)^2 \frac{\mathrm{d}\tau}{\left|\tau\right|} \right)^{1/2}\notag\\
		&\lesssim   T \left(\left\|u\right\|_{\mathcal{X}^s}^\alpha +\left\|v\right\|_{\mathcal{X}^s}^\alpha \right)\left\|u-v\right\|_{\mathcal{X}^s}+1_{\alpha <1}\left\|u-v\right\|_{L^\infty L^\rho}^\alpha \left\|v\right\|_{\mathcal{X}^s}.
	\end{align}
	The inequality (\ref{z2}) is now an immediate consequence of (\ref{z21}) and (\ref{z22}).
	
	Next, we prove (\ref{z1}). From (\ref{fu}) and H\"older's inequality, we have
	\begin{eqnarray}
		&&\left\|\left(\chi_{T}(t-\tau)-\chi_T(t)\right)K_2\left(f(u)-f(v)\right)\right\|_{L^1L^2}\notag\\
		&\lesssim &  \left\|\chi_{T}(t-\tau)-\chi_T(t)\right\|_{L^1}\left\|K_2\right\|_{L^{2+\ep}}\left(\left\|u\right\|_{L^\infty L^{r}}^\alpha +\left\|v\right\|_{L^\infty L^{r}}^\alpha \right)\left\|u-v\right\|_{L^\infty L^{r}}.\notag
	\end{eqnarray}
	This inequality together with Sobolev's embedding $H^s(\R^N)\hookrightarrow L^{r}(\R^N)$ yields
	\begin{align}\label{z11}
		&\left(\int_{-\infty }^{\infty }\left(\left|\tau\right|^{-s/4 }\left\|\left(\chi_T(t-\tau)-\chi_T(\tau)\right)K_2(x)\left(f(u)-f(v)\right)\right\|_{L^1L^2}\right)^2 \frac{\mathrm{d}\tau}{\left|\tau\right|} \right)^{1/2}\notag\\
		&\lesssim   T^{1-\frac{s}{4}} \left(\left\|u\right\|_{\mathcal{X}^s}^\alpha +\left\|v\right\|_{\mathcal{X}^s}^\alpha \right)\left\|u-v\right\|_{\mathcal{X}^s},
	\end{align}
	where we used $\left\|\chi_T\right\|_{B^{s/4}_{1,2}}\lesssim T^{1-\frac{s}{4}}$. Similarly, we have
	\begin{align}\label{z12}
		&\left(\int_{-\infty }^{\infty }\left(\left|\tau\right|^{-s/4 }\left\|\left(\chi_T(t-\tau)-\chi_T(\tau)\right)K_1(x)\left(f(u)-f(v)\right)\right\|_{L^1L^2}\right)^2 \frac{\mathrm{d}\tau}{\left|\tau\right|} \right)^{1/2}\notag\\
		&\lesssim   T \left(\left\|u\right\|_{\mathcal{X}^s}^\alpha +\left\|v\right\|_{\mathcal{X}^s}^\alpha \right)\left\|u-v\right\|_{\mathcal{X}^s}.
	\end{align}
	The inequality (\ref{z1}) is now an immediate consequence of (\ref{z11}) and (\ref{z12}).
\end{proof}
Similar to Lemma \ref{l9}, we can establish the following Lemma and omit the details.
\begin{lemma}\label{l10}
	Let $N\ge1$, $\beta>\max \left\{2,\frac{N}{4}\right\} $, $0<s\le2$, $s\ge\frac{N}{2}$,  $0<\alpha<\infty $, $f\in\mathcal{C}(\alpha )$, and $K(x)\in L^\infty (\R^N)+ L^\beta(\R^N)$. There exist $\sigma>0$  such that
	\begin{equation}\label{x1}
		\left\|K\chi_T\left(f(u)-f(v)\right)\right\|_{L^{\overline{q}}L^{\overline{r}}} \lesssim  T^{\sigma}\left(\left\|u\right\|_{\mathcal{X}^s}^\alpha+\left\|v\right\|_{\mathcal{X}^s}^\alpha\right)\left\|u-v\right\|_{\mathcal{X}^s},\notag
	\end{equation}
	where $\overline{q}=\frac{4}{4-s}$, $\overline{r}=2$ with $1\le \overline{q},\overline{r}\le2$, $\frac{4}{\overline{q}}-N\left(\frac{1}{2}-\frac{1}{\overline{r}}\right)=4-s$ and
	\begin{equation}\label{x2}
		\left\|K\chi_T\left(f(u)-f(v)\right)\right\|_{L^1L^2}\lesssim T^{\sigma}\left(\left\|u\right\|_{\mathcal{X}^s}^\alpha+\left\|v\right\|_{\mathcal{X}^s}^\alpha\right)\sup_{(q,r)\in\Lambda_b}\left\|u-v\right\|_{L^qL^r}.\notag
	\end{equation}
\end{lemma}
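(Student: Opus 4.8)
The plan is to follow the proof of Lemma~\ref{l9}, taking advantage of the two features of the regime $s\ge N/2$. First, $N/2\le s\le 2$ forces $N\le4$, so that the biharmonic admissibility relation $\frac4\gamma+\frac N\rho=\frac N2$ together with $\rho\ge2$ automatically yields $\gamma\ge2$; hence any pair obtained by merely prescribing $\rho\in[2,\infty)$ lies in $\Lambda_b$, and in particular $(\infty,2)\in\Lambda_b$, so that $\|w\|_{L^\infty H^s}\le\|w\|_{\mathcal{X}^s}$. Second, since $s\ge N/2$ the Sobolev embedding $H^s(\R^N)\hookrightarrow L^p(\R^N)$ holds for \emph{every} $2\le p<\infty$, which gives complete freedom in the spatial Hölder split. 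Note also $\beta>\max\{2,N/4\}=2$. As in Lemma~\ref{l9}, we first decompose $K=K_1+K_2$ with $K_1\in L^\infty(\R^N)$ and $K_2\in L^1(\R^N)\cap L^\beta(\R^N)$, which is possible because $L^\infty+L^\beta\subset L^\infty+(L^1\cap L^\beta)$; it then suffices to prove each of the two claimed inequalities with $K$ replaced by $K_1$ and by $K_2$, taking for $\sigma$ the minimum of the resulting exponents of $T$.

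For the $L^{\overline q}L^{\overline r}$ estimate with $\overline q=\frac{4}{4-s}$, $\overline r=2$: by \eqref{fu} and Hölder in $x$,
\[
\big\|K_i\chi_T(t)\big(f(u)-f(v)\big)(t)\big\|_{L^2_x}\lesssim|\chi_T(t)|\,\|K_i\|_{L^{p_i}_x}\Big(\|u(t)\|_{L^{r_i}_x}^\alpha+\|v(t)\|_{L^{r_i}_x}^\alpha\Big)\|(u-v)(t)\|_{L^{r_i}_x},
\]
where $\frac12=\frac1{p_i}+\frac{\alpha+1}{r_i}$; one takes $p_1=\infty$, $r_1=2(\alpha+1)$ and $p_2=\beta$, $r_2=\frac{2\beta(\alpha+1)}{\beta-2}$, so that $2\le r_i<\infty$ and $K_i\in L^{p_i}$. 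The embedding $H^s\hookrightarrow L^{r_i}$ together with $(\infty,2)\in\Lambda_b$ bounds each of $\|u(t)\|_{L^{r_i}_x}$, $\|v(t)\|_{L^{r_i}_x}$, $\|(u-v)(t)\|_{L^{r_i}_x}$ by the corresponding $\mathcal{X}^s$-norm uniformly in $t$, after which taking the $L^{\overline q}_t$-norm of $\chi_T$ produces $\|\chi_T\|_{L^{\overline q}_t}\lesssim T^{1/\overline q}=T^{(4-s)/4}$. Thus the first inequality holds with $\sigma=\frac{4-s}{4}>0$.

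For the $L^1L^2$ estimate: here $u-v$ is only to be measured by $\sup_{(q,r)\in\Lambda_b}\|u-v\|_{L^qL^r}$, so we keep it in $L^{q_i}_tL^{r_i}_x$ for the pair $(q_i,r_i)\in\Lambda_b$ determined by $\frac4{q_i}+\frac N{r_i}=\frac N2$ (which satisfies $q_i\ge2$ because $N\le4$), while $u,v$ again go into $L^\infty_tH^s_x\subseteq\mathcal{X}^s$. Using \eqref{fu} and Hölder in space with the same $(p_i,r_i)$ as above, and then Hölder in time with $1=\frac1{q_i'}+\frac1{q_i}$ — the factors $\|u(t)\|_{L^{r_i}_x}^\alpha$, $\|v(t)\|_{L^{r_i}_x}^\alpha$ being placed in $L^\infty_t$ — yields the factor $\|\chi_T\|_{L^{q_i'}_t}\lesssim T^{1-1/q_i}$. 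Since $q_i\ge2$ we have $1-1/q_i\ge\frac12>0$, so the second inequality follows with $\sigma=\min_i(1-1/q_i)>0$.

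I do not expect a genuine obstacle here: the range $s\ge N/2$ is exactly the case in which Sobolev embedding eliminates the delicate balancing of Besov exponents needed in Lemmas~\ref{l1}--\ref{l8}. The only points that need a little attention are (a) keeping each $r_i$ finite, which is why $K_2$ is paired with $p_2=\beta>2$ rather than with the value $2$, so that the embedding $H^s\hookrightarrow L^{r_i}$ can be applied; and (b) verifying that the pairs $(q_i,r_i)$ really belong to $\Lambda_b$, i.e. $q_i\ge2$ — and this is precisely where $N\le4$ (forced by $N/2\le s\le2$) enters.
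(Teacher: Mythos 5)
Your argument is correct and is essentially the proof the paper intends: the paper omits the details of Lemma \ref{l10}, saying only that it is proved "similar to Lemma \ref{l9}", and your write-up is exactly that argument — decompose $K=K_1+K_2$ with $K_1\in L^\infty$, $K_2\in L^1\cap L^\beta$, use $H^s\hookrightarrow L^p$ for all finite $p\ge2$ (valid since $s\ge N/2$), Hölder in space with $(p_1,r_1)=(\infty,2(\alpha+1))$ and $(p_2,r_2)=(\beta,\tfrac{2\beta(\alpha+1)}{\beta-2})$ (the paper's Lemma \ref{l9} uses $p_2=2+\ep$, which is the same choice up to relabeling), and then Hölder in time, with the check $q_i>2$ coming from $N\le4$. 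No gaps.
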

\section{Proof of Theorem \ref{T1} and  Theorem \ref{T2}}\label{5}
\subsection{\textbf{The local existence}}
Since $K(x)\in L^\infty (\R^N)+ L^\beta(\R^N)\subset L^\infty (\R^N)+L^1(\R^N)\cap L^\beta (\R^N)$, we can decompose $K(x)=K_1(x)+K_2(x)$, with $K_1(x)\in L^\infty (\R^N)$, $K_2(x)\in L^1(\R^N)\cap L^\beta(\R^N)$. Then using Lemmas \ref{l1}--\ref{l10} and the standard contraction mapping argument, we can   establish the following local existence results easily and omit the details.
\begin{proposition}\label{914}
	Let $N\ge1$, $\mu=-1$ or $0$, $\beta>\max \left\{2,\frac{N}{4}\right\} $, $0<s\le2$, $0<\alpha $,  $\left(N-2s\right)\alpha <8-\frac{2N}{\beta}$, $f\in\mathcal{C}(\alpha )$, and $K(x)\in L^\infty (\R^N)+ L^\beta(\R^N)$.
	There exist  constants $C_1>0$ and $c(L)>0$ that depends only on $L$ such that for any $L\ge 2C_1\left\|\varphi\right\|_{H^s}, 0<T\le c(L)$, the equation
	\begin{equation}\label{911}
		u=e^{i t (\Delta^2+\mu\Delta)} \varphi-i\int_{0}^{t}e^{i(t-\tau)(\Delta^2+\mu\Delta)}\left(\chi_TKf(u)\right)(s)  \mathrm{d}\tau
	\end{equation}
	admits  a unique solution $ u \in C (\R, H^s(\R^N)) \cap\mathcal{X}^s$ with $\|u\|_{\mathcal{X}^s} \leq L$.
\end{proposition}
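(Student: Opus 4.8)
The plan is to run a contraction mapping argument for the map
\[
\Phi(u)(t):=e^{it(\Delta^2+\mu\Delta)}\varphi-i\,G\bigl(\chi_TKf(u)\bigr)(t),\qquad (Gh)(t):=\int_0^te^{i(t-\tau)(\Delta^2+\mu\Delta)}h(\tau)\,d\tau ,
\]
on the ball $B_L:=\{u\in C(\R,H^s)\cap\mathcal X^s:\ \|u\|_{\mathcal X^s}\le L\}$ endowed with the weaker metric $d(u,v):=\sup_{(q,r)\in\Lambda_b}\|u-v\|_{L^qL^r}$. A preliminary point is that $(B_L,d)$ is a complete metric space: a $d$--Cauchy sequence $(u_n)\subset B_L$ converges in every $L^qL^r$, $(q,r)\in\Lambda_b$, to some $u$, and by weak (resp.\ weak-$*$ at the non-reflexive endpoints) lower semicontinuity of the norms defining $\mathcal X^s$ one gets $\|u\|_{\mathcal X^s}\le L$, so $u\in B_L$. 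Throughout I use that $\chi_TKf(u)$ belongs to the space $B^{s/4}_{\gamma_0',2}L^{\rho_0'}\cap L^{\overline q}L^{\overline r}$ required by Proposition \ref{p1}: this follows from Lemmas \ref{l1}--\ref{l10} applied with $v=0$, after the decomposition $K=K_1+K_2$ with $K_1\in L^\infty$, $K_2\in L^1\cap L^\beta$, so that $\Phi(u)\in C(\R,H^s)\cap\mathcal X^s$ is well defined by Proposition \ref{p1}.

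The first main step is the self-mapping property $\Phi(B_L)\subset B_L$. For the free part, \eqref{311} gives $\|e^{it(\Delta^2+\mu\Delta)}\varphi\|_{\mathcal X^s}\le C_1\|\varphi\|_{H^s}$. For the Duhamel part I apply the bilinear estimate \eqref{11214} to $G(\chi_TKf(u))$, writing $\chi_TKf(u)=\chi_T\cdot\bigl(Kf(u)\bigr)$ so that $\chi_T$ is the factor whose difference quotients enter the second integral term of \eqref{11214} and $Kf(u)$ the one entering the first; since $f(0)=0$ and $f\in\mathcal C(\alpha)$, the two integral terms are then precisely what Lemmas \ref{l1}/\ref{l2} and \ref{l3}/\ref{l4} bound with $v=0$ (for $s<N/2$; by Lemmas \ref{l9}/\ref{l10} with the admissible pair $(\infty,2)$, i.e.\ in $L^1L^2$, for $s\ge N/2$), and the remaining term $\|\chi_TKf(u)\|_{L^{\gamma'}L^{\rho'}\cap L^{\overline q}L^{\overline r}}$ is bounded by Lemmas \ref{l5}/\ref{l6} and \ref{l7}/\ref{l8} (resp.\ Lemma \ref{l10}), again with $v=0$, so that all $1_{\alpha<1}$ correction terms drop out because they carry a factor $\|v\|_{\mathcal X^s}$. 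Adding up, $\|G(\chi_TKf(u))\|_{\mathcal X^s}\le C_2T^{\sigma_*}L^{\alpha+1}$ where $\sigma_*>0$ is the minimum of the exponents furnished by the lemmas (and $T\le1$). Hence $\|\Phi(u)\|_{\mathcal X^s}\le C_1\|\varphi\|_{H^s}+C_2T^{\sigma_*}L^{\alpha+1}$; choosing $L\ge 2C_1\|\varphi\|_{H^s}$ and $c(L)\le 1$ with $C_2\,c(L)^{\sigma_*}L^\alpha\le\tfrac12$ yields $\|\Phi(u)\|_{\mathcal X^s}\le L$ for every $0<T\le c(L)$.

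The second main step is the contraction estimate in the metric $d$. For $u,v\in B_L$ one has $\Phi(u)-\Phi(v)=-i\,G\bigl(\chi_TK(f(u)-f(v))\bigr)$, so by the inhomogeneous Strichartz estimate \eqref{SZ} of Lemma \ref{L2.2S} one gets, uniformly in $(q,r)\in\Lambda_b$, $\|G(\chi_TK(f(u)-f(v)))\|_{L^qL^r}\lesssim\|\chi_TK(f(u)-f(v))\|_{L^{\gamma'}L^{\rho'}}$ for a fixed admissible pair $(\gamma,\rho)$, and the right-hand side is controlled, by Lemmas \ref{l7}/\ref{l8} (or Lemma \ref{l10} for $s\ge N/2$), by $T^{\sigma}(\|u\|_{\mathcal X^s}^\alpha+\|v\|_{\mathcal X^s}^\alpha)\,d(u,v)\le 2C_3T^\sigma L^\alpha\,d(u,v)$ --- the crucial feature being that these lemmas place $u-v$ exactly in the weak norm $\sup_{(q,r)}\|\cdot\|_{L^qL^r}$ and carry no $1_{\alpha<1}$ correction, which is what permits a genuine contraction even though $f$ is merely $C^1$. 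Taking the supremum over $(q,r)$ and shrinking $c(L)$ so that $C_4\,c(L)^\sigma L^\alpha\le\tfrac12$ makes $\Phi$ a strict $d$--contraction of $B_L$; Banach's fixed point theorem then gives the unique $u\in B_L$ with $\Phi(u)=u$, i.e.\ the asserted unique solution of \eqref{911} with $\|u\|_{\mathcal X^s}\le L$.

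I expect the main difficulty to be organizational rather than conceptual, concentrated in three places: (i) checking that the implicit constants in \eqref{311} and in the Strichartz estimates of Lemma \ref{L2.2S} are uniform over $\Lambda_b$, so that passing to $\sup_{(q,r)\in\Lambda_b}$ in the definition of $\mathcal X^s$ is legitimate (the excluded endpoint $(2,\infty,4)$ is why this needs a remark); (ii) verifying completeness of $(B_L,d)$, i.e.\ the weak/weak-$*$ lower semicontinuity of $\|\cdot\|_{\mathcal X^s}$ at its non-reflexive endpoints; and (iii) matching, term by term and separately in the ranges $s<N/2$ and $s\ge N/2$, each of the three pieces of \eqref{11214} to the appropriate lemma among \ref{l1}--\ref{l10} together with an admissible choice of the four pairs $(q,r),(\gamma_0,\rho_0),(\gamma_1,\rho_1),(\gamma,\rho)$, which is possible exactly because Proposition \ref{p1} permits these pairs to be chosen independently.
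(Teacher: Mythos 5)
Your proposal is correct and is precisely the ``standard contraction mapping argument using Lemmas \ref{l1}--\ref{l10}'' that the paper invokes for Proposition \ref{914} while omitting all details: decompose $K=K_1+K_2$, close the ball $\|u\|_{\mathcal X^s}\le L$ via Proposition \ref{p1} and the lemmas with $v=0$, and contract in the weaker metric $\sup_{(q,r)\in\Lambda_b}\|u-v\|_{L^qL^r}$ using Lemmas \ref{l7}, \ref{l8}, \ref{l10}. Your weak-metric contraction step is exactly the estimate the paper itself writes down later as (\ref{1076}) in the continuous-dependence argument, so the two treatments coincide.
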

Since $\chi_T|_{t\in[0,T]}=1$, it follows from   Proposition \ref{914} that there exists a solution $u\in C([0,T],H^s) \bigcap_{(q,r)\in\Lambda_b}L^{q}\left([0,T], B_{r, 2}^{s}\right)$ to the Cauchy problem (\ref{NLS}).
Moreover, using  the similar  arguments as that in Chapter 4 of \cite{Ca9},
we  deduce that  the Cauchy problem (\ref{NLS}) admits  a unique  maximum solution
\[
u\in C([0,T_{\text{max}}(\varphi)),H^s)\bigcap_{(q,r)\in \Lambda _b}L^{q}\left([0,T_{\text{max}}(\varphi)), B_{r, 2}^{s}\right)
\]
with  the blowup alternative (\ref{bl1-1}) holds, and omit the details.

\subsection{\textbf{The continuous dependence}}
In  this subsection,  we prove  continuous dependence of the solution map on the interval $[0,A]$, where $0<A<T_{\text{max}}(\varphi)$.
Let
$$L=4C_1\left\|u\right\|_{L^\infty \left([0,T],H^s\right)}<\infty,$$
and $T>0$ sufficiently small such that
\begin{equation}\label{9171}
	T\le c(L), \qquad C_2T^{\widetilde{\sigma}}L^\alpha +C_3T^\sigma L^\alpha \le \frac{1}{4},
\end{equation}
where $C_1,c(L)$ are the constants  in Proposition \ref{914},  $C_2, \widetilde{\sigma}$ and $C_3,\sigma$ are the constants in (\ref{1076}) and (\ref{1071}), respectively. Since   $\varphi_n\rightarrow\varphi$ in $H^s$, we know that there exists a positive $n_1$ such that $L\ge2C_1\|\varphi_n\|_{H^s}$ for every $n\ge n_1$. It then follows from Proposition \ref{914} that for every $n\ge n_1$, the following equation
\begin{equation*}
	u_n=e^{i t (\Delta^2+\mu\Delta)} \varphi_n-i\int_{0}^{t}e^{i(t-\tau)(\Delta^2+\mu\Delta)}\left(\chi_TKf(u_n)\right)(\tau)  \mathrm{d}\tau,
\end{equation*}
admits a unique solution $u_n\in C(\R,H^s)\cap\mathcal{X}^s$ with
\begin{equation}\label{M1}
	\|u_n\|_{\mathcal{X}^s} \leq L,\ \text{for } \forall\  n\ge n_1.
\end{equation}
Moreover, Proposition \ref{914} also gives us a unique solution $v\in C(\R,H^s)\cap \mathcal{X}^s$ to the equation (\ref{911}) that satisfies
\begin{equation}\label{9175}
	\|v\|_{\mathcal{X}^s} \leq L.
\end{equation}
The proof of the continuous dependence will proceed by the following claims.
\begin{claim}\label{10221}
	For any $0<\delta<s$, we have
	$\sup_{(q,r)\in\Lambda_b}\|u_{n}-v\|_{L^{q}(\R, B_{r,2}^{s-\delta})}\underset{n \rightarrow \infty}{\longrightarrow} 0.\label{10181}$.
\end{claim}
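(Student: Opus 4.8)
The plan is to use that $u_n$ and $v$ solve the \emph{same} truncated equation (\ref{911}) with the common cut-off $\chi_T$, together with the uniform bounds $\|u_n\|_{\mathcal{X}^s}\le L$ and $\|v\|_{\mathcal{X}^s}\le L$ coming from (\ref{M1}) and (\ref{9175}), and to argue in two stages: first prove convergence of $w_n:=u_n-v$ in the plain Strichartz norms $\sup_{(q,r)\in\Lambda_b}\|\cdot\|_{L^qL^r}$, then interpolate against the uniform $\mathcal{X}^s$-bound to recover the $B^{s-\delta}_{r,2}$-regularity. Writing
\[
w_n=e^{it(\Delta^2+\mu\Delta)}(\varphi_n-\varphi)-iG\bigl(\chi_TK(f(u_n)-f(v))\bigr),
\]
decomposing $K=K_1+K_2$ with $K_1\in L^\infty(\R^N)$, $K_2\in L^1(\R^N)\cap L^\beta(\R^N)$, and applying the inhomogeneous Strichartz estimates (\ref{sz})--(\ref{SZ}) of Lemma \ref{L2.2S} together with Lemma \ref{l7} and Lemma \ref{l8} (when $s<\tfrac N2$) or Lemma \ref{l10} (when $s\ge\tfrac N2$), I would obtain, for every admissible pair $(q,r)\in\Lambda_b$,
\[
\|w_n\|_{L^qL^r}\ \lesssim\ \|\varphi_n-\varphi\|_{L^2}\;+\;CT^{\sigma}\bigl(\|u_n\|_{\mathcal{X}^s}^{\alpha}+\|v\|_{\mathcal{X}^s}^{\alpha}\bigr)\sup_{(q',r')\in\Lambda_b}\|w_n\|_{L^{q'}L^{r'}}.
\]
Taking the supremum over $(q,r)\in\Lambda_b$, using $\sup_{(q,r)}\|w_n\|_{L^qL^r}\lesssim\|w_n\|_{\mathcal{X}^s}\le 2L<\infty$ (via $B^s_{r,2}\hookrightarrow L^r$, valid since $s>0$), and absorbing the last term thanks to the smallness $CT^{\sigma}L^{\alpha}\le\tfrac14$ guaranteed by the choice of $T$ in (\ref{9171}), gives
\[
\sup_{(q,r)\in\Lambda_b}\|w_n\|_{L^qL^r}\ \lesssim\ \|\varphi_n-\varphi\|_{L^2}\ \underset{n\to\infty}{\longrightarrow}\ 0.
\]
Crucially, Lemmas \ref{l7}, \ref{l8}, \ref{l10} estimate $f(u_n)-f(v)$ directly through (\ref{fu}) and contain \emph{no} $1_{\alpha<1}$ term, so the non-Lipschitz range $0<\alpha<1$ is handled uniformly here and needs no separate treatment.

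It then remains to upgrade $L^qL^r$-convergence to $L^qB^{s-\delta}_{r,2}$-convergence. For fixed $t$, a Littlewood--Paley splitting at frequency $2^K$ together with $\|\phi_k*_x w\|_{L^r}\lesssim\|w\|_{L^r}$ gives
\[
\|w(t)\|_{B^{s-\delta}_{r,2}}^{2}\ \lesssim\ 2^{2sK}K\,\|w(t)\|_{L^r}^{2}\;+\;2^{-2\delta K}\|w(t)\|_{B^{s}_{r,2}}^{2};
\]
optimizing in $K$ and integrating in $t$ with H\"older, using $\|w_n\|_{L^qB^s_{r,2}}\le\|w_n\|_{\mathcal{X}^s}\le 2L$, yields an interpolation estimate of the form
\[
\|w_n\|_{L^qB^{s-\delta}_{r,2}}\ \lesssim_{L}\ \|w_n\|_{L^qL^r}^{\delta/(s+\delta)}\bigl(1+\log_+(1/\|w_n\|_{L^qL^r})\bigr)^{1/2},
\]
with a constant uniform over $\Lambda_b$. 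Combined with the first stage this produces $\sup_{(q,r)\in\Lambda_b}\|w_n\|_{L^qB^{s-\delta}_{r,2}}\to0$, which is the claim.

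\textbf{Main obstacle.} The delicate point is this last interpolation: since $L^r\not\hookrightarrow B^0_{r,2}$ for $r>2$, one cannot simply quote the real-interpolation identity $L^qB^{s-\delta}_{r,2}=(L^qL^r,L^qB^s_{r,2})_{\theta,2}$, so the elementary frequency-splitting estimate above (at the price of a harmless logarithmic factor, beaten by the polynomial gain) is needed; one must moreover check that the Strichartz constants and the Besov embedding constants entering the argument are bounded uniformly over the admissible set $\Lambda_b$, including its endpoints such as $(\infty,2)$ and, in low dimension, $(8/N,\infty)$. By contrast, verifying the hypotheses of Lemmas \ref{l7}, \ref{l8}, \ref{l10} for the two pieces $K_1,K_2$ and checking that the absorbing constant is $<1$ for the $T$ fixed in (\ref{9171}) are routine.
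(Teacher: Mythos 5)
Your proposal is correct and follows essentially the same two-stage route as the paper: first close a contraction for $\sup_{(q,r)\in\Lambda_b}\|u_n-v\|_{L^qL^r}$ using Lemmas \ref{l7}, \ref{l8}, \ref{l10} together with Strichartz and the smallness of $T$ from (\ref{9171}), then upgrade to $B^{s-\delta}_{r,2}$ by interpolating against the uniform $\mathcal{X}^s$-bounds (\ref{M1}) and (\ref{9175}). The only divergence is in justifying the interpolation step: where you build an elementary frequency-splitting estimate with a logarithmic loss, the paper simply uses the convexity inequality $\|w\|_{B^{s-\delta}_{r,2}}\lesssim\|w\|_{L^r}^{\delta/s}\|w\|_{B^{s}_{r,2}}^{1-\delta/s}$, which holds without any loss because interpolation between two \emph{different} smoothness indices improves the summability index (one has $\|w\|_{B^{s-\delta}_{r,1}}\lesssim\|w\|_{B^{0}_{r,\infty}}^{\delta/s}\|w\|_{B^{s}_{r,\infty}}^{1-\delta/s}$ and $\|w\|_{B^{0}_{r,\infty}}\lesssim\|w\|_{L^r}$), so the obstruction $L^r\not\hookrightarrow B^0_{r,2}$ you flag is genuine but already circumvented by the standard inequality.
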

\begin{proof}
	Let $K(x)=K_1(x)+K_2(x)$ with $K_1(x)\in L^{\beta}(\R^N)$ and $K_2(x)\in L^\infty (\R^N)$.  From Lemmas \ref{l7},  \ref{l8} and \ref{l10}, we conclude that there exist $(\gamma_i,\rho_i)\in\Lambda_b$, $\sigma_i>0$, $i=1,2$, such that
	\begin{eqnarray}\label{2251}
		&&\left\|	K_i\chi_T\left(f(u)-f(v)\right)\right\|_{L^{\gamma_i'}L^{\rho_i'}}\notag\\
		&\lesssim& T^{\sigma_i}\left(\left\|u\right\|_{\mathcal{X}^s}^\alpha +\left\|v\right\|_{\mathcal{X}^s}^\alpha \right)\sup_{(q,r)\in\Lambda_b}\left\|u-v\right\|_{L^qL^r},\ i=1,2.
	\end{eqnarray}
	From Strichartz's estimate and (\ref{2251}), we deduce that there exists $C_2>0$ such that
	\begin{eqnarray}\label{1076}
		&&\underset{(q,r)\in\Lambda_b}\sup\|u_n-v\|_{L^qL^r}\notag\\
		&\le&C_2\left\|\varphi_{n}-\varphi\right\|_{L^{2}}+C_2T^{\widetilde{\sigma}}\left(\left\|u_n\right\|_{\mathcal{X}^s}^\alpha +\left\|v\right\|_{\mathcal{X}^s}^\alpha \right)\sup_{(q,r)\in\Lambda_b}\left\|u_n-v\right\|_{L^qL^r},
	\end{eqnarray}
	where $\widetilde{\sigma}=\min \left\{\sigma_1,\sigma_2\right\} $.
	Since  $2C_2T^{\widetilde{\sigma}}L^\alpha\le\fc12$ by (\ref{9171}), it follows from (\ref{M1}), (\ref{9175}) and (\ref{1076}) that
	\begin{equation}
		\sup_{(q,r)\in\Lambda_b}\left\|u_{n}-v\right\|_{L^{q}\left(\R, L^{r}\right)}\lesssim \|\varphi_n-\varphi\|_{L^2}.\notag
	\end{equation}
	Moreover,  from the interpolation theorem, (\ref{M1}) and (\ref{9175}),   we have
	\begin{eqnarray}
		\sup_{(q,r)\in\Lambda_b}\|u_{n}-v\|_{L^{q}(\R, B_{r,2}^{s-\delta})}&\lesssim &   \|u_{n}-v\|_{L^{q}(\R, B^s_{r,2})}^{1-\fc\delta s}\|u_n-v\|_{L^q(\R,L^r)}^{\fc\delta s} \notag\\
		&\lesssim &  L^{1-\fc\delta s}\|\varphi_{n}-\varphi\|_{L^{2}}^{\fc \delta s} \underset{n \rightarrow \infty}{\longrightarrow} 0,\notag
	\end{eqnarray}
	which finishes the proof of   Claim \ref{10221}.
\end{proof}
\begin{claim}\label{9176}
	$\|u_n-v\|_{\mathcal{X}^s}\underset{n \rightarrow \infty}{\longrightarrow} 0.$
\end{claim}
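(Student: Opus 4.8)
The plan is to close a difference estimate for $w_n:=u_n-v$ built on the bilinear Strichartz estimate of Proposition \ref{p1}, absorb the part of the nonlinear contribution that still carries full $\mathcal{X}^s$-regularity using the smallness of $T$ fixed in (\ref{9171}), and then dispose of the remaining low-regularity remainder by Claim \ref{10221}. Subtracting the integral equations satisfied by $u_n$ and $v$ gives
\[
w_n=e^{it(\Delta^2+\mu\Delta)}(\varphi_n-\varphi)-iG\big(\chi_TK(f(u_n)-f(v))\big).
\]
To the first term I would apply the linear estimate (\ref{311}), and to the Duhamel term the bilinear estimate (\ref{11214}) with $g=\chi_T$ and with $K(x)(f(u_n)-f(v))$ in the role of the other factor; since $K$ does not depend on $t$, the difference $f(t-\tau)-f(t)$ occurring in (\ref{11214}) becomes $K\big((f(u_n)-f(v))_\tau-(f(u_n)-f(v))\big)$. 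The three terms on the right of (\ref{11214}) are then exactly the quantities bounded in Lemmas \ref{l1}--\ref{l8} when $s<N/2$ (after the splitting $K=K_1+K_2$, $K_1\in L^\infty$, $K_2\in L^1\cap L^\beta$: Lemmas \ref{l1},\ref{l2} handle the term in which the difference falls on $\chi_T$, Lemmas \ref{l3},\ref{l4} the term in which it falls on $f(u_n)-f(v)$, and Lemmas \ref{l5},\ref{l6} and \ref{l7},\ref{l8} the undifferentiated product in $L^{\overline q}L^{\overline r}$ and $L^{\gamma'}L^{\rho'}$ respectively), and in Lemmas \ref{l9},\ref{l10} when $s\ge N/2$ (taking all admissible pairs in (\ref{11214}) equal to $(\infty,2)$, so the norms become $L^1L^2$ ones).

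Assembling these bounds yields an estimate of the form (\ref{1071}),
\[
\|w_n\|_{\mathcal{X}^s}\le C_3\|\varphi_n-\varphi\|_{H^s}+C_3T^\sigma\big(\|u_n\|_{\mathcal{X}^s}^\alpha+\|v\|_{\mathcal{X}^s}^\alpha\big)\|w_n\|_{\mathcal{X}^s}+C_3\,\mathcal{M}_n,
\]
where $\mathcal{M}_n$ gathers the residual low-order contributions: a power of $\sup_{(q,r)\in\Lambda_b}\|w_n\|_{L^qL^r}$ coming from Lemmas \ref{l7},\ref{l8},\ref{l10}, and, only in the non-Lipschitz range $0<\alpha<1$, a power of norms of the type $\|w_n\|_{L^{\widetilde q}_{\text{uloc},T}L^{Nr/(N-sr)}}$ and $\|w_n\|_{L^\infty(\R,L^r\cap L^\rho)}$ produced by Lemmas \ref{l3},\ref{l4},\ref{l9}. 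Using $\|u_n\|_{\mathcal{X}^s},\|v\|_{\mathcal{X}^s}\le L$ from (\ref{M1}),(\ref{9175}) and $C_3T^\sigma L^\alpha\le\frac14$ from (\ref{9171}), the middle term is at most $\frac12\|w_n\|_{\mathcal{X}^s}$ and can be absorbed, so that
\[
\|w_n\|_{\mathcal{X}^s}\le 2C_3\|\varphi_n-\varphi\|_{H^s}+2C_3\,\mathcal{M}_n .
\]
Since $\varphi_n\to\varphi$ in $H^s$ and, by the proof of Claim \ref{10221}, $\sup_{(q,r)\in\Lambda_b}\|w_n\|_{L^qL^r}\lesssim\|\varphi_n-\varphi\|_{L^2}\to0$, it remains only to show that the $0<\alpha<1$ pieces of $\mathcal{M}_n$ tend to $0$; when $\alpha\ge1$ they are absent and the claim follows at once.

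The genuinely delicate point is precisely controlling these $0<\alpha<1$ remainders, since they are sublinear in $w_n$ and cannot be absorbed, and this is where Claim \ref{10221} is needed in its full strength. Fix $\delta\in(0,s)$ small. For $\|w_n\|_{L^\infty(\R,L^r\cap L^\rho)}$ one uses $H^{s-\delta}=B^{s-\delta}_{2,2}\hookrightarrow L^r\cap L^\rho$, valid for $\delta$ small enough, together with Claim \ref{10221} at the pair $(\infty,2)$. For $\|w_n\|_{L^{\widetilde q}_{\text{uloc},T}L^{Nr/(N-sr)}}$ one first passes, by H\"older in $t$ over the window of length $2T$ supporting $\chi_T$, to $\|w_n\|_{L^{q_1}(\R,L^{Nr/(N-sr)})}$ for a biharmonic admissible pair $(q_1,r_1)$ with $r_1$ chosen slightly larger than $r$ (so that $q_1>\widetilde q$) and $\delta$ correspondingly small enough that the Sobolev embedding $B^{s-\delta}_{r_1,2}(\R^N)\hookrightarrow L^{Nr/(N-sr)}(\R^N)$ holds; then $\|w_n\|_{L^{q_1}B^{s-\delta}_{r_1,2}}\le\sup_{(q,r)\in\Lambda_b}\|w_n\|_{L^q(\R,B^{s-\delta}_{r,2})}\to0$ by Claim \ref{10221}. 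Thus each summand of $\mathcal{M}_n$ is dominated by a fixed positive power of a quantity that vanishes as $n\to\infty$, whence $\mathcal{M}_n\to0$ and $\|w_n\|_{\mathcal{X}^s}\to0$, which is Claim \ref{9176}. I expect this last interpolation and embedding bookkeeping for the non-differentiable nonlinearity to be the main obstacle; everything else is a direct consequence of Proposition \ref{p1} and the estimates of Lemmas \ref{l1}--\ref{l10} already established.
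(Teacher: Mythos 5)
Your proposal is correct and follows essentially the same route as the paper: subtract the Duhamel formulas, apply Proposition \ref{p1} together with Lemmas \ref{l1}--\ref{l10} to reach an inequality of the form (\ref{1071}), absorb the $T^{\sigma}L^{\alpha}$ term via (\ref{9171}), and kill the sublinear $0<\alpha<1$ remainders by perturbing the exponents and invoking Claim \ref{10221}. The only cosmetic deviations are that you route the $\sup_{(q,r)}\|w_n\|_{L^qL^r}$ contributions through Claim \ref{10221} rather than absorbing them into $\|w_n\|_{\mathcal{X}^s}$, and in the case $s\ge N/2$ you use the embedding $H^{s-\delta}\hookrightarrow L^r\cap L^{\rho}$ directly where the paper interpolates $L^r$ between $L^2$ and $L^{r+\rho}$; both variants are sound.
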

\begin{proof}
	When $s<\frac{N}{2}$, we deduce from Proposition \ref{p1}, Lemmas \ref{l1}--\ref{l8} that, for some $C_3,\sigma>0$,
	\begin{eqnarray}\label{1071}
		\|u_n-v\|_{\mathcal{X}^s}&\le & C_3  \|\varphi_n-\varphi\|_{H^s}+C_3T^\sigma \left(\left\|u_n\right\|_{\mathcal{X}^s}^\alpha +\left\|v\right\|_{\mathcal{X}^s}^\alpha \right)\left\|u_n-v\right\|_{\mathcal{X}^s}\notag\\
		&&+1_{\alpha <1}C_3\left\|u_n-v\right\|_{L^{\widetilde{q_1}}_{\text{uloc},T}L^{\frac{Nr_1}{N-sr_1}}\cap L^{\widetilde{q_2}}_{\text{uloc},T}L^{\frac{Nr_2}{N-sr_2}}}^\alpha \left\|v\right\|_{\mathcal{X}^s},
	\end{eqnarray}
	where $\widetilde{q_i}>1$, $(q_i,r_i)\in\Lambda_b$ with $\widetilde{q_i}<q_i$, $r_i<4^*,i=1,2$ are given by Lemmas \ref{l3} and \ref{l4},  respectively.
	Since $2C_3T^\sigma L^\alpha \le \frac{1}{2}$ by  (\ref{9171}), we deduce from (\ref{M1}), (\ref{9175}) and (\ref{1071}) that
	\begin{equation*}
		\|u_n-v\|_{\mathcal{X}^s}\lesssim\|\varphi_n-\varphi\|_{H^s}+\left\|u_n-v\right\|_{L^{\widetilde{q_1}}_{\text{uloc},T}L^{\frac{Nr_1}{N-sr_1}}\cap L^{\widetilde{q_2}}_{\text{uloc},T}L^{\frac{Nr_2}{N-sr_2}}}^\alpha.
	\end{equation*}
	Thus it suffices to  show that
	\begin{equation}\label{2261}
		\left\|u_n-v\right\|_{L^{\widetilde{q_1}}_{\text{uloc},T}L^{\frac{Nr_1}{N-sr_1}}\cap L^{\widetilde{q_2}}_{\text{uloc},T}L^{\frac{Nr_2}{N-sr_2}}} \underset{n \rightarrow \infty}{\longrightarrow} 0.
	\end{equation}

	For any  $\eta>0$ sufficiently small such that $r_1+\eta<4^*$, we choose  $q_\eta>0$ such that $(q_\eta,r_1+\eta)\in\Lambda_b$.   From Sobolev embedding $B^{s-\fc{\eta N}{r_1(r_1+\eta)}}_{r_1+\eta,2}(\R^N)\hookrightarrow L^{{\frac{Nr_1}{N-sr_1}}}(\R^N)$ and the claim \ref{10221}, we have
	\begin{equation}
		\left\|u_{n}-v\right\|_{L^{q_\eta}_{\text{uloc},T}L^{{\frac{Nr_1}{N-sr_1}}}} \lesssim \left\|u_{n}-v\right\|_{L^{q_\eta}B^{s-\fc{\eta N}{r_1(r_1+\eta)}}_{r_1+\eta,2}}  \underset{n \rightarrow \infty}{\longrightarrow} 0.
	\end{equation}
	Since $q_\eta$ increasing to $q_1$ as $\eta$ tends to $0$ and $\widetilde{q_1}<q_1$, we have, by applying H\"older's inequality
	\begin{equation}\label{2252}
		\|u_n-v\|_{L^{\widetilde{q_1}}_{\text{uloc},T}L^{{\frac{Nr_1}{N-sr_1}}}}\underset{n \rightarrow \infty}{\longrightarrow} 0.
	\end{equation}
	Similarly, we have
	\begin{equation}
		\|u_n-v\|_{L^{\widetilde{q_2}}_{\text{uloc},T}L^{{\frac{Nr_2}{N-sr_2}}}}\underset{n \rightarrow \infty}{\longrightarrow} 0. \notag
	\end{equation}
	This  together with (\ref{2252}) yields (\ref{2261}).
	
	We now consider the case $s\ge \frac{N}{2}$. In this case, we use Lemmas \ref{l9} and \ref{l10} instead of Lemmas \ref{l1}--\ref{l8}. Similar to (\ref{2261}), it suffices to prove that
	\begin{equation}
		\left\|u_n-v\right\|_{L^\infty \left(\R,L^r\cap L^\rho\right)}\underset{n \rightarrow \infty}{\longrightarrow} 0,\notag
	\end{equation}
	where $2\le r,\rho<\infty $ are given by Lemma \ref{l9}.
	Let $p=r+\rho$, then by interpolation theorem, there exists $0\le\theta<1$ such that
	\begin{equation}\label{323}
		\left\|u_n-v\right\|_{L^\infty L^r}\lesssim \left\|u_n-v\right\|_{L^\infty L^p}^\theta \left\|u_n-v\right\|_{L^\infty L^2}^{1-\theta }.
	\end{equation}
	From (\ref{323}), Sobolev's embedding $H^s(\R^N)\hookrightarrow L^p(\R^N)$,  (\ref{M1}), (\ref{9175}) and Claim \ref{10221}, we have
	\begin{equation}
		\left\|u_n-v\right\|_{L^\infty L^r}\underset{n \rightarrow \infty}{\longrightarrow} 0.\notag
	\end{equation}
	Similar, we have
	\begin{equation}
		\left\|u_n-v\right\|_{L^\infty L^\rho}\underset{n \rightarrow \infty}{\longrightarrow} 0.\notag
	\end{equation}
	This finishes the proof of Claim \ref{9176}.
\end{proof}

Now we resume the proof of the continuous dependence. Since  $\chi_T|_{t\in[0,T]}=1$, we  deduce from the uniqueness that $u(t)=v(t)$ on the interval $[0,T]$. Moreover, it follows from Claim \ref{9176} that $\|u_n-u\|_{L^\infty([0,T],H^s)}$ $\underset{n \rightarrow \infty}{\longrightarrow} 0$. In particular, we have $\|u_n(T)-u(T)\|_{H^s}\underset{n \rightarrow \infty}{\longrightarrow} 0.$ Arguing as previously, we deduce that the solution $u_n$ exists on the interval $[T,2T]$ for $n\ge n_2$ and that $\|u_n-u\|_{L^\infty([T,2T],H^s)}\underset{n \rightarrow \infty}{\longrightarrow} 0.$ Iterating finitely many times like this, we get the continuous dependence on the interval $[0,A]$.
\subsection{\textbf{Proof of Theorem \ref{T2}}}
Before proving Proposition \ref{p1},  we prove the following version of the Caffarelli-Kohn-Nirenberg-type inequality.
\begin{lemma}\label{10291}
	Let $N\ge1$, $\beta>\max \left\{2,\frac{N}{4}\right\} $,    $0<\alpha,\ (N-4)\alpha <8-\frac{2N}{\beta}$, $f\in\mathcal{C}(\alpha )$, and $K(x)\in L^\infty (\R^N)+ L^\beta(\R^N)$.  Then,  we have
	\begin{equation}\label{341}
		\int_{\mathbb{R}^{N}}\left|K(x)f(u)u\right| d x  \lesssim  \left\|\Delta u\right\|_{L^2}^{\frac{N\left(\beta\alpha +2\right)}{4\beta}}\left\|u\right\|_{L^2}^{\alpha +2-\frac{N\left(\beta\alpha +2\right)}{4\beta}}
		+\|\Delta u\|_{L^{2}}^{\frac{N \alpha}{4}}\|u\|_{L^{2}}^{\alpha+2-\frac{N \alpha}{4}}.
	\end{equation}
\end{lemma}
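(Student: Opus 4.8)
The plan is to split off the bounded part and the $L^{\beta}$ part of $K$ and then reduce everything to a Gagliardo--Nirenberg inequality. First I would use $K\in L^{\infty}(\R^N)+L^{\beta}(\R^N)$ to write $K=K_1+K_2$ with $K_1\in L^{\beta}(\R^N)$ and $K_2\in L^{\infty}(\R^N)$. Since $f\in\mathcal{C}(\alpha)$ and $f(0)=0$, inequality (\ref{fu}) with $v=0$ gives $|f(u)|\lesssim|u|^{\alpha+1}$, hence the pointwise bound $|K(x)f(u)u|\lesssim(|K_1(x)|+|K_2(x)|)\,|u|^{\alpha+2}$. It therefore suffices to bound $\|K_2|u|^{\alpha+2}\|_{L^1}$ by the second term of (\ref{341}) and $\|K_1|u|^{\alpha+2}\|_{L^1}$ by the first.

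For the bounded part I would use $\|K_2|u|^{\alpha+2}\|_{L^1}\le\|K_2\|_{L^{\infty}}\|u\|_{L^{\alpha+2}}^{\alpha+2}$ and then the Gagliardo--Nirenberg inequality $\|u\|_{L^{\alpha+2}}\lesssim\|\Delta u\|_{L^2}^{\theta_2}\|u\|_{L^2}^{1-\theta_2}$ with $\theta_2=\frac{N\alpha}{4(\alpha+2)}$. Admissibility requires only $\theta_2\le1$, i.e.\ $\alpha+2\le\frac{2N}{N-4}$ when $N\ge5$ (and no restriction when $N\le4$); this follows from $(N-4)\alpha<8-\frac{2N}{\beta}<8$. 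Raising to the power $\alpha+2$ gives exactly $\|\Delta u\|_{L^2}^{N\alpha/4}\|u\|_{L^2}^{\alpha+2-N\alpha/4}$.

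For the $L^{\beta}$ part I would apply H\"older's inequality in $x$ with exponents $\beta,\beta'$ to get $\|K_1|u|^{\alpha+2}\|_{L^1}\le\|K_1\|_{L^{\beta}}\|u\|_{L^{(\alpha+2)\beta'}}^{\alpha+2}$, and then Gagliardo--Nirenberg with $p=(\alpha+2)\beta'$: $\|u\|_{L^p}\lesssim\|\Delta u\|_{L^2}^{\theta_1}\|u\|_{L^2}^{1-\theta_1}$ with $\theta_1=\frac{N(p-2)}{4p}$, which, using $p-2=\frac{\alpha\beta+2}{\beta-1}$, equals $\frac{N(\alpha\beta+2)}{4\beta(\alpha+2)}$. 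A direct computation shows that the admissibility condition $p\le\frac{2N}{N-4}$ (for $N\ge5$; automatic for $N\le4$) is \emph{exactly equivalent} to $(N-4)\alpha\le8-\frac{2N}{\beta}$, i.e.\ to the standing hypothesis, with $\beta>\frac{N}{4}$ being precisely what keeps $8-\frac{2N}{\beta}$ positive. Raising to the power $\alpha+2$ gives $(\alpha+2)\theta_1=\frac{N(\beta\alpha+2)}{4\beta}$, hence the first term of (\ref{341}); adding the two contributions completes the proof.

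There is no real obstacle here: once $K$ is split and the pointwise estimate $|f(u)u|\lesssim|u|^{\alpha+2}$ is in hand, the whole argument is the standard $H^2$ Gagliardo--Nirenberg inequality (with $\|\Delta u\|_{L^2}$ comparable to the homogeneous $\dot H^2$-seminorm). The only point that needs care is the bookkeeping of the scaling exponents and checking that they lie in the admissible range, which is exactly where the $H^2$-subcriticality assumption $(N-4)\alpha<8-\frac{2N}{\beta}$ enters.
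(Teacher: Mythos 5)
Your proposal is correct and follows essentially the same route as the paper: split $K$ into its $L^\infty$ and $L^\beta$ parts, use $|f(u)u|\lesssim|u|^{\alpha+2}$ from (\ref{fu}), apply H\"older with exponents $\beta,\beta'$ on the $L^\beta$ piece, and conclude with the Gagliardo--Nirenberg inequality; your exponent computations and the verification that $(N-4)\alpha<8-\frac{2N}{\beta}$ is exactly the admissibility condition for $p=(\alpha+2)\beta'$ are accurate. The only (immaterial) difference is that the paper takes the $L^\beta$ component in $L^1\cap L^\beta$ and leaves the exponent bookkeeping implicit, whereas you spell it out.
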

\begin{proof}
	Let $K(x)=K_1(x)+K_2(x)$ with $K_1(x)\in L^\infty (\R^N)$ and $K_2(x)\in L^{1}(\R^N)\cap L^\beta (\R^N)$.  From (\ref{fu}), H\"older's inequality and Gagliardo-Nirenberg's inequality, we have
	\begin{equation}\label{342}
		\int_{\mathbb{R}^{N}}\left|K_2(x)f(u)u\right| d x  \lesssim  \left\|K_2\right\|_{L^{\beta}}\left\|u\right\|_{L^{(\alpha +2)\beta'}}^{\alpha +2}
		\lesssim  \left\|\Delta u\right\|_{L^2}^{\frac{N\left(\beta\alpha +2\right)}{4\beta}}\left\|u\right\|_{L^2}^{\alpha +2-\frac{N\left(\beta\alpha +2\right)}{4\beta}}.
	\end{equation}
	Similarly, we have
	\begin{equation}\label{343}
		\int_{\mathbb{R}^{N}}\left|K_1(x)f(u)u\right| d x \lesssim  \|\Delta u\|_{L^{2}}^{\frac{N \alpha}{4}}\|u\|_{L^{2}}^{\alpha+2-\frac{N \alpha}{4}}.
	\end{equation}
	The inequality (\ref{341}) is now an immediate consequence of (\ref{342}) and (\ref{343}).
\end{proof}
Using the classical energy estimate method, we can obtain the following conservation law for the Cauchy problem (\ref{NLS}) easily and omit the details.
\begin{lemma}
	Let $N\ge1$, $\mu=-1$ or $0$,  $\beta>\max \left\{2,\frac{N}{4}\right\} $,  $0<\alpha,\ (N-4)\alpha  <8-\frac{2N}{\beta}$,   $K(x)\in L^\infty (\R^N)+L^\beta(\R^N)$ be a real-valued function and  $f\in\mathcal{C}(\alpha )$ that satisfies \\
	(i) $f(a)\in \R$ for all $a\ge0$; \\
	(ii) $f(u)=\frac{u}{\left|u\right|}f(\left|u\right|)$ for all $u\in\mathbb{C}\setminus \left\{0\right\} $;\\
	If $u$ is a smooth solution of (\ref{NLS}) on the time interval $[0,T]$, then the mass and the energy are conserved:
	\begin{equation}\label{m}
		M[u](t)=\int_{\R^N}|u|^2 dx=M[\varphi],
	\end{equation}
	\begin{equation}
		E(u)(t)=\int_{\R^N}\frac12\left(|\Delta u|^2-\mu|\nabla u|^2\right)\ dx+\int_{\R^N}K(x)G(u(x))\ dx=E[\phi],\label{e}
	\end{equation}
	where $G(z):=\int_{0}^{\left|z\right|}f(s) \mathrm{d}s$.
\end{lemma}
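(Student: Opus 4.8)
The plan is to prove both identities by the classical energy method: differentiate each functional in time and show that all contributions cancel after pairing the equation against $\bar u$ (for the mass) and against $\partial_t\bar u$ (for the energy), using the self-adjointness of $\Delta^2+\mu\Delta$ together with the structural hypotheses (i)--(ii) on $f$. I will first carry this out assuming $u$ is smooth and decays fast enough in $x$ that every integration by parts below is legitimate with no boundary terms; the reduction of the general $H^s$ case to this one is routine and I indicate it at the end.

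\emph{Mass.} Multiply $i\partial_t u+\Delta^2 u+\mu\Delta u+Kf(u)=0$ by $\bar u$, integrate over $\R^N$, and take imaginary parts. Integrating by parts, $\int\Delta^2 u\,\bar u=\int|\Delta u|^2$ and $\mu\int\Delta u\,\bar u=-\mu\int|\nabla u|^2$ are real and drop out. For the nonlinear term, hypothesis (ii) gives $f(u)\bar u=\tfrac{u\bar u}{|u|}f(|u|)=|u|\,f(|u|)$ pointwise where $u\neq0$ (and $0$ where $u=0$), which is real by (i); as $K$ is real-valued, $\int Kf(u)\bar u\in\R$ as well. Hence $0=\mathrm{Im}\int i\partial_t u\,\bar u=\mathrm{Re}\int\partial_t u\,\bar u=\tfrac12\tfrac{d}{dt}\|u\|_{L^2}^2$, which is (\ref{m}).

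\emph{Energy.} Multiply the equation by $\partial_t\bar u$, integrate over $\R^N$, and take real parts. Then $\mathrm{Re}\int i\partial_t u\,\partial_t\bar u=\mathrm{Re}\,(i\|\partial_t u\|_{L^2}^2)=0$; integrating by parts, $\mathrm{Re}\int\Delta^2 u\,\partial_t\bar u=\mathrm{Re}\int\Delta u\,\overline{\partial_t\Delta u}=\tfrac12\tfrac{d}{dt}\|\Delta u\|_{L^2}^2$ and $\mu\,\mathrm{Re}\int\Delta u\,\partial_t\bar u=-\mu\,\mathrm{Re}\int\nabla u\cdot\overline{\partial_t\nabla u}=-\tfrac\mu2\tfrac{d}{dt}\|\nabla u\|_{L^2}^2$ (for $\mu=0$ the last term is absent). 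For the nonlinear term, note that $G(z)=\int_0^{|z|}f(s)\,ds$ is $C^1$ in the real sense with $G(0)=0$, and, identifying $\mathbb C\cong\R^2$, its real gradient equals $f$: for $z\neq0$ it is $\tfrac{z}{|z|}f(|z|)$ by (ii), and at $z=0$ it is $0$ because $f(0)=0$. The chain rule then gives $\tfrac{d}{dt}G(u(t,x))=\mathrm{Re}\big(f(u)\,\overline{\partial_t u}\big)$, so $\mathrm{Re}\int Kf(u)\,\partial_t\bar u=\int K\,\tfrac{d}{dt}G(u)=\tfrac{d}{dt}\int K\,G(u)$, again using that $K$ is real. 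Adding the four identities yields $\tfrac{d}{dt}E(u)(t)=0$, which is (\ref{e}).

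\emph{Rigor.} The one delicate point is the justification of these manipulations and the finiteness of $E(u)$ when $u$ is only the $H^s$ (in particular $H^2$) solution of Theorem \ref{T1}. Finiteness follows from $|G(z)|\lesssim|z|^{\alpha+2}$, which is immediate from $|f(z)|\lesssim|z|^{\alpha+1}$ (take $v=0$ in (\ref{fu})), together with the Gagliardo--Nirenberg bound underlying Lemma \ref{10291}: writing $K=K_1+K_2$ with $K_1\in L^\infty$ and $K_2\in L^1\cap L^\beta$, one controls $\int|K|\,|u|^{\alpha+2}$, hence $\int|K\,G(u)|$ and $\int|Kf(u)\bar u|$, by $\|\Delta u\|_{L^2}$ and $\|u\|_{L^2}$ exactly as in the proof of Lemma \ref{10291}. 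Then one first establishes (\ref{m})--(\ref{e}) for Schwartz initial data, where all the integrations above are plainly valid, and passes to a general $\varphi\in H^s$ using the well-posedness and continuous dependence of Theorem \ref{T1} and the continuity of $M$ and $E$ on $H^2$. I expect this approximation step, not the algebra above, to be the main (though standard) obstacle.
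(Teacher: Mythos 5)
Your argument is correct and is exactly the ``classical energy estimate method'' that the paper invokes for this lemma while omitting all details: pairing the equation with $\bar u$ (imaginary part) for the mass and with $\partial_t\bar u$ (real part) for the energy, with hypotheses (i)--(ii) guaranteeing that $f(u)\bar u$ is real and that $G$ is a real potential for $f$. The closing remarks on finiteness of the energy and on approximation go beyond what the stated lemma (for smooth solutions) requires, but they are accurate and consistent with how the paper uses the result.
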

\begin{proof}[Proof of Theorem \ref{T2}]
	Let $u \in C \left(\left[0,T_{\text{max}}(\varphi)\right), H^2(\R^N)\right)$ be the  maximum solution given by Theorem \ref{T1}. To obtain a global solution, it is sufficient to get an a priori bound of the local solution, since the existence time obtained in Corollary \ref{Co1} depends only the $H^{2}$ norm of the initial datum.
	
	Note that  $\left|G(u(x))\right|\lesssim \left|f(u)u\right|$,  we deduce from   Gagliardo-Nirenberg's inequality, Lemma \ref{10291},  and the conservation of  the energy (\ref{e})	that, for any $t\in \left[0,T_{\text{max}}(\varphi)\right)$,
	\begin{eqnarray}\label{11125}
		\|\Delta u(t)\|_{L^{2}}^{2}
		&=&2 E\left[\phi\right]+\mu\|\nabla u(t)\|_2^2-2\int_{\R^N}K(x)G(u(x))\ dx \notag\\
		& \leq& 2 E\left[\phi\right]+\|u(t)\|_{L^2}\|\Delta u(t)\|_{L^2}+C\|\Delta u\|_{L^{2}}^{\frac{N \alpha}{4}}\|u\|_{L^{2}}^{\alpha+2-\frac{N \alpha}{4}}\notag\\
		&&+ \left\|\Delta u\right\|_{L^2}^{\frac{N\left(\beta\alpha +2\right)}{4\beta}}\left\|u\right\|_{L^2}^{\alpha +2-\frac{N\left(\beta\alpha +2\right)}{4\beta}}.
	\end{eqnarray}
	From  the conservation of the mass (\ref{m})	and  Cauchy-Schwartz's inequality,  we have,  for any $t\in \left[0,T_{\text{max}}(\varphi)\right)$,
	\begin{equation}\label{11126}
		\left\|u(t)\right\|_{L^2}\left\|\Delta u(t)\right\|_{L^2}
		\le  \fc14\|\Delta u(t)\|^2_{L^2}+\|\varphi\|_{L^2}^2.
	\end{equation}
	On the other hand,  from Young's inequality and the conservation of the mass  (\ref{m}), we have
	\begin{equation}
		\label{3111}C\|\Delta u(t)\|_{L^{2}}^{\frac{\alpha N}{4}}\left\|\varphi\right\|_{L^{2}}^{\alpha+2-\frac{\alpha N}{4}}\le \fc14\|\Delta u\|_{L^2}^2+C^2\|\varphi\|_{L^2}^{\left(\alpha+2-\frac{\alpha N}{4}\right)\fc8{8-\alpha N}}.
	\end{equation}

	Suppose $\alpha <\frac{8}{N}-\frac{2}{\beta}$. Since $\frac{N\left(\beta \alpha +2\right)}{4\beta}<2$, we have
	\begin{equation}\label{3112}
		C \left\|\Delta u\right\|_{L^2}^{\frac{N\left(\beta\alpha +2\right)}{4\beta}}\left\|u\right\|_{L^2}^{\alpha +2-\frac{N\left(\beta\alpha +2\right)}{4\beta}}
		\le    \frac{1}{4}\left\|\Delta u\right\|_{L^2}^2+C^2\left\|\phi\right\|_{L^2}^{\left(\alpha +2-\frac{N\left(\beta\alpha +2\right)}{4\beta}\right)\frac{8\beta}{(8-N\alpha )\beta-2N}}.
	\end{equation}
	From the above two inequalities,  (\ref{11125})--(\ref{3112}) and  the conservation of the mass (\ref{m}), we conclude that
	$\sup_{t\in \left[0,T_{\text{max}}(\varphi)\right) }\|u(t)\|_{H^2}<\infty$,
	which in turn implies the global existence.
	
	If $\alpha =\frac{8}{N}-\frac{2}{\beta}$, we deduce from (\ref{11125}), (\ref{11126}) and (\ref{3111}) that
	\begin{equation}
		\left(\frac{1}{2}-C\left\|\varphi\right\|_{L^2}^\alpha \right)\left\|\Delta u(t)\right\|_{L^2}^2\le 2E[\phi]+\|\varphi\|_{L^2}^2+\|\varphi\|_{L^2}^{\left(\alpha+2-\frac{\alpha N}{4}\right)\fc8{8-\alpha N}}.\notag
	\end{equation}
	Hence, the Laplacian of $u$ remains bounded if $\left\|\phi\right\|_{L^2}\le \left(\frac{1}{2C}\right)^{1/\alpha }$, which completes the proof of Theorem \ref{T2}.
\end{proof}
\section*{Acknowledgments}
This work is partially supported by the National Natural Science Foundation of China 11771389,  11931010 and 11621101.
\bibliographystyle{elsarticle-num}

\end{document}